\newtheorem{thm}{Theorem}[section]
\newtheorem{rmk}{Remark}[section]
\newtheorem{prop}{Proposition}[section]
\newtheorem{lm}{Lemma}[section]
\numberwithin{equation}{section}
\def\@setauthors{%
  \begingroup
  \def\thanks{\protect\thanks@warning}%
  \trivlist
  \centering\footnotesize \@topsep30\p@\relax
  \advance\@topsep by -\baselineskip
  \item\relax
  \author@andify\authors
  \def\\{\protect\linebreak}%
  \authors%
  \ifx\@empty\contribs
  \else
    ,\penalty-3 \space \@setcontribs
    \@closetoccontribs
  \fi
  \endtrivlist
  \endgroup
}
\title{\textbf{Multi-objective and  hierarchical  control for coupled stochastic parabolic systems}}
\author{Abdellatif Elgrou$^\dagger$ \;\;and\;\; Omar Oukdach$^\ddagger$}
\date{}
\begin{document}

\maketitle
\vspace{-2em}

\begin{center}
\small
$^\dagger $Cadi Ayyad University, Faculty of Sciences Semlalia, Marrakesh, Morocco. \\
Email: \texttt{abdoelgrou@gmail.com} \\
\vspace{0.4cm}
$^\ddagger $Moulay Ismaïl University of Meknes, FST Errachidia, PMA Laboratory, BP 50, Boutalamine, Errachidia, Morocco. \\
Email: \texttt{omar.oukdach@gmail.com}
\end{center}

\vspace{1em}

\begin{abstract}
We study the Stackelberg-Nash null controllability of a coupled system governed by two linear forward stochastic parabolic equations. The system includes one leader control localized in a subset of the domain, two additional leader controls in the diffusion terms, and \( m \) follower controls, where \( m \geq 2 \). We consider two different scenarios for the followers: first, when the followers minimize a functional involving both components of the system's state, and second, when they minimize a functional involving only the second component of the state. For fixed leader controls, we first establish the existence and uniqueness of the Nash equilibrium in both scenarios and provide its characterization. As a byproduct, the problem is reformulated as a classical null controllability issue for the associated coupled forward-backward stochastic parabolic system. To address this, we derive new Carleman estimates for the adjoint stochastic systems. As far as we know, this problem is among the first to be discussed for stochastic coupled systems.
\end{abstract}
\vspace{1em}
\noindent\textbf{Keywords:} Null controllability; Stackelberg-Nash strategies; Coupled stochastic parabolic systems; Carleman estimates.\\\\
\noindent\textbf{AMS Mathematics Subject Classification:} 93B05, 93B07, 93E20, 35Q91.
\section{Introduction}

In classical single-objective optimal control problems, the goal is typically to find a control that steers the initial state to a fixed target while minimizing a specific performance criterion. However, in many practical scenarios, the use of multiple controls becomes necessary, leading to the consideration of multiple objectives that must be achieved simultaneously. This gives rise to multi-objective control problems. When a hierarchy among the objectives must be respected, the problem is referred to as a hierarchical control problem. In general, the objectives tend to be in conflict, and the optimality of one objective does not necessarily imply the optimality of others. Consequently, it is often impossible to find a strategy that satisfies all constraints simultaneously. Therefore, various compromises and trade-offs are required to attain a desirable balance among the conflicting objectives.

Depending on the nature of the problem, several approaches can be used to solve multi-objective control problems, such as Nash, Pareto, or Stackelberg strategies. These approaches are grounded in game theory, with each strategy reflecting a distinct philosophical framework. For a detailed discussion, we refer to \cite{Na51} for the noncooperative optimization strategy proposed by Nash, \cite{Pa96} for the Pareto cooperative strategy, and \cite{St34} for the Stackelberg hierarchical-cooperative strategy.

The hierarchical control problem has been extensively studied in the context of partial differential equations (PDEs) in numerous research articles. We do not attempt to provide an exhaustive list of references; instead, we aim to highlight a selection of key papers relevant to a class of PDEs. This type of problem was first introduced by J.-L. Lions for the heat and wave equations in \cite{LiPa} and \cite{LiHy}, respectively. Later, in \cite{D02} and \cite{DL04}, the authors combined the Nash and Stackelberg strategies within the context of approximate controllability. For further applications of these strategies to various evolution equations, we refer to \cite{carrSantos} for the Kuramoto–Sivashinsky equation, \cite{CF18, GRP02} for linear and semilinear parabolic equations with some numerical computations, \cite{GMR13} for Stokes equations, and \cite{ArCa24silva,GRP01} for the Burgers equation. In the context of exact null controllability, additional results can be found, for example, in \cite{ArFeGu17}, \cite{ArCaSa15}, and \cite{Calsavara} for linear and semilinear parabolic equations, \cite{AAF}, \cite{DjKDZ23} for degenerate parabolic equations, and \cite{BoMaOuNash}, \cite{BoMaOuNash2} for parabolic equations with dynamic boundary conditions.

As far as we know, \cite{oukBouElgMan}, \cite{stakNahSPEs}, and \cite{StNashdeg} are the only papers addressing Stackelberg-Nash null controllability for certain stochastic parabolic equations. The first paper considers stochastic parabolic equations with dynamic boundary conditions, the second studies  stochastic parabolic equations with Dirichlet boundary conditions, and the third investigates degenerate stochastic parabolic equations.

The objective of this study is to analyze a multi-objective control problem for coupled stochastic parabolic systems. The existing literature contains numerous classical controllability results for coupled parabolic systems, both in the deterministic case (see, e.g., \cite{surveyAmmarKBGT}, \cite{GonBurTer}) and in the stochastic case (see, e.g., \cite{Preprielgr24}, \cite{Aelgrou24CSPS}, \cite{lu2021mathematical}, and the references therein). The hierarchical control problem for deterministic coupled parabolic systems has been studied in several works; for example, we refer to \cite{sAlbSantos}, \cite{HSP18}, and \cite{HSP16}.

In this paper, we address a hierarchical control problem (using the Stackelberg-Nash strategy) for coupled stochastic parabolic systems with Dirichlet boundary conditions. The control setup includes one leader control localized in a subset of the domain, two additional leader controls in the diffusion terms, and \( m \) followers, where \( m \geq 2 \). To the best of our knowledge, this is the first study to tackle this control problem for coupled stochastic parabolic systems. For these reasons, we consider that stochastic parabolic equations present a particularly interesting field of investigation for this type of problem.  

To achieve our controllability results, the main tool is Carleman estimates, which are energy inequalities with exponential weights. These estimates were first introduced by Carleman in 1939 (see \cite{Carl39}) to prove the uniqueness of solutions for certain PDEs. Nowadays, Carleman estimates are crucial for studying various problems in applied mathematics, including control and inverse problems. Since we will use these estimates for certain coupled backward-forward stochastic parabolic equations, we refer the reader to \cite{liu2014global}, \cite[Chapter 9]{lu2021mathematical}, and \cite{tang2009null} for the derivation of some Carleman estimates for forward and backward stochastic parabolic equations.

\subsection{Statement of the problem}

Let \( T > 0 \), \( G \subset \mathbb{R}^N \) (with \( N \geq 1 \)) be an open bounded domain with a smooth boundary \( \Gamma \), and let \( m \geq 2 \) be an integer. Let \( G_0 \), \( G_i \), and \( \mathcal{O}_{i,d} \) (for \( i = 1, 2, \ldots, m \)) be nonempty open subsets of \( G \). We denote by \( \chi_{\mathcal{S}} \) the characteristic function of a subset \( \mathcal{S} \subset G \). The norm and inner product of a given Hilbert space \( \mathcal{H} \) will be denoted by \( |\cdot|_\mathcal{H} \) and \( \langle \cdot, \cdot \rangle_\mathcal{H} \), respectively. We define the following domains:
\[
Q = (0,T) \times G, \quad \Sigma = (0,T) \times \Gamma, \quad \text{and} \quad Q_0 = (0,T) \times G_0.
\]

Let \( (\Omega, \mathcal{F}, \{\mathcal{F}_t\}_{t \in [0,T]}, \mathbb{P}) \) be a fixed complete filtered probability space on which a one-dimensional standard Brownian motion \( W(\cdot) \) is defined. Here, \( \{\mathcal{F}_t\}_{t \in [0,T]} \) is the natural filtration generated by \( W(\cdot) \) and augmented by all \( \mathbb{P} \)-null sets in \( \mathcal{F} \). For a Banach space \( \mathcal{X} \), we define the following spaces equipped with their canonical norms:

\begin{itemize}
    \item \( C([0,T]; \mathcal{X}) \): the Banach space of all \( \mathcal{X} \)-valued continuous functions defined on \( [0,T] \).
    \item \( L^2_{\mathcal{F}_t}(\Omega; \mathcal{X}) \): the Banach space of all \( \mathcal{X} \)-valued \( \mathcal{F}_t \)-measurable random variables \( X \) such that
    \[
    \mathbb{E}\left[ |X|_{\mathcal{X}}^2 \right] < \infty.
    \]
    \item \( L^2_{\mathcal{F}}(0,T; \mathcal{X}) \): the Banach space consisting of all \( \mathcal{X} \)-valued \( \{\mathcal{F}_t\}_{t \in [0,T]} \)-adapted processes \( X(\cdot) \) such that
    \[
    \mathbb{E}\left[ |X(\cdot)|_{L^2(0,T; \mathcal{X})}^2 \right] < \infty.
    \]
    \item \( L^\infty_{\mathcal{F}}(0,T; \mathcal{X}) \): the Banach space consisting of all \( \mathcal{X} \)-valued \( \{\mathcal{F}_t\}_{t \in [0,T]} \)-adapted essentially bounded processes, with its usual norm denoted by \( |\cdot|_\infty \).
    \item \( L^2_{\mathcal{F}}(\Omega; C([0,T]; \mathcal{X})) \): the Banach space consisting of all \( \mathcal{X} \)-valued \( \{\mathcal{F}_t\}_{t \in [0,T]} \)-adapted continuous processes \( X(\cdot) \) such that
    \[
    \mathbb{E}\left[\max_{t \in [0,T]} |X(t)|_\mathcal{X}^2 \right] < \infty.
    \]
\end{itemize}

In what follows, we use the following notations:
\[
\mathscr{H}_i = L^2_\mathcal{F}(0,T;L^2(G_i)), \quad 
\mathscr{H}_{i,d} = L^2_\mathcal{F}(0,T;L^2(\mathcal{O}_{i,d};\mathbb{R}^2)), \quad \text{for }\, i = 1, 2, \ldots, m,
\]
and
\[
\mathscr{U} = L^2_\mathcal{F}(0,T;L^2(G_0)) \times L^2_\mathcal{F}(0,T;L^2(G)) \times L^2_\mathcal{F}(0,T;L^2(G)), \quad 
\mathscr{H} = \mathscr{H}_1 \times \cdots \times \mathscr{H}_m.
\]

We consider the following coupled forward stochastic parabolic system
\begin{equation}\label{eqq1.1}
\begin{cases}
\begin{aligned}
&dy_1 - \Delta y_1 \,dt
= \left[a_{11}y_1 + a_{12}y_2 
+ u_1\chi_{G_0} + \sum_{i=1}^m v_i\chi_{G_i}\right] \,dt + u_2 \,dW(t) \quad &\text{in } Q, \\
&dy_2 - \Delta y_2 \,dt 
= \left[a_{21}y_1 + a_{22}y_2 \right] \,dt + u_3 \,dW(t) \quad &\text{in } Q, \\
&y_1=y_2 = 0 \quad &\text{on } \Sigma, \\
&y_1(0) = y_1^0,\;\;y_2(0) = y_2^0  \quad &\text{in } G,
\end{aligned}
\end{cases}
\end{equation}
where  \( y_1^0,y_2^0 \in L^2_{\mathcal{F}_0}(\Omega; L^2(G)) \) are the initial states, and the coefficients \( a_{ij} \in L^\infty_\mathcal{F}(0,T; L^\infty(G)) \) for \( 1 \leq i,j \leq 2 \). The leader control is the triple \( (u_1, u_2, u_3) \in \mathscr{U} \), where \( u_1 \) is localized in \( G_0 \), while \( u_2 \) and \( u_3 \) are applied globally across \( G \). The follower controls \( (v_i)_{1 \leq i \leq m} \in \mathscr{H} \) are localized in \( G_i \subset G \), with the condition \( G_0 \cap G_i = \emptyset \) for \( i = 1, 2, \ldots, m \), i.e., no follower has access to change the strategy of the leader control inside \( G_0 \). The set \( G_0 \) represents the primary control domain, while \( G_i \) (\( i = 1, 2, \ldots, m \)) are the secondary control domains, all assumed to be small. Throughout this paper, \( C \) denotes a positive constant, which may vary from one place to another.

The system \eqref{eqq1.1} can be written as the following abstract forward system
\begin{equation}\label{asteqq1.1}
\begin{cases}
\begin{array}{ll}
dy - \Delta y \,dt = \left[Ay + D\left(u_1 \chi_{G_0} + \sum_{i=1}^m v_i \chi_{G_i}\right)\right] \,dt + u \,dW(t) & \text{in } Q, \\
y = 0 & \text{on } \Sigma, \\
y(0) = y^0 & \text{in } G,
\end{array}
\end{cases}
\end{equation}
where \( y = (y_1, y_2)^T \) is the state variable,  
\( y^0 = (y^0_1, y^0_2) \in L^2_{\mathcal{F}_0}(\Omega; L^2(G; \mathbb{R}^2)) \) is the initial state, 
\( D = (1, 0)^T \), \( u = (u_2, u_3)^T \), and \( A = (a_{ij})_{1 \leq i, j \leq 2} \in L^\infty_\mathcal{F}(0, T; L^\infty(G; \mathbb{R}^{2 \times 2})) \) is the coupling matrix.

From \cite{krylov},  the equation \eqref{asteqq1.1} (hence \eqref{eqq1.1}) is well-posed, i.e., for any initial condition
\( y^0 = (y^0_1, y^0_2) \in L^2_{\mathcal{F}_0}(\Omega; L^2(G; \mathbb{R}^2)) \), leader controls \( (u_1, u_2, u_3) \in \mathscr{U} \), and follower controls \( (v_i)_{1 \leq i \leq m} \in \mathscr{H} \), there exists a unique weak solution
$$
y = (y_1, y_2) \in L^2_\mathcal{F}(\Omega; C([0,T]; L^2(G; \mathbb{R}^2))) \bigcap L^2_\mathcal{F}(0,T; H^1_0(G; \mathbb{R}^2)).
$$
Moreover, there exists a positive constant \( C \) such that
\begin{align*}
& \left| y \right|_{L^2_\mathcal{F}(\Omega; C([0,T]; L^2(G; \mathbb{R}^2)))} + \left| y \right|_{L^2_\mathcal{F}(0,T; H^1_0(G; \mathbb{R}^2))} \\
& \leq C \Big( \left| y^0 \right|_{L^2_{\mathcal{F}_0}(\Omega; L^2(G; \mathbb{R}^2))} + | (u_1, u_2, u_3) |_{\mathscr{U}} + | (v_1, \dots, v_m) |_{\mathscr{H}} \Big).
\end{align*}

In this paper, we study the Stackelberg-Nash null controllability problem for the coupled linear stochastic parabolic system \eqref{eqq1.1}, which involves three leaders and \( m \) followers. We begin by formulating the problem under consideration: For fixed \( y^i_d = (y^i_{1,d}, y^i_{2,d}) \in \mathscr{H}_{i,d} \), \( i = 1, 2, \dots, m \), representing the target functions, we define two types of functionals:
\begin{enumerate}
\item The main functional $J$ is given by
\begin{equation*}
J(u_1, u_2, u_3) = \frac{1}{2} \mathbb{E} \iint_Q \left( \chi_{G_0}|u_1|^2 + |u_2|^2 + |u_3|^2 \right) \,dx\,dt.
\end{equation*}
\item The secondary functionals \( J_i \) and \( \widetilde{J}_i \) are defined as follows: For each \( i = 1, 2, \dots, m \),
\begin{align}\label{functji1}
\begin{aligned}
J_i(u_1, u_2, u_3; v_1,\dots,v_m) = & \, \frac{\alpha_i}{2} \mathbb{E} \iint_{(0,T) \times \mathcal{O}_{i,d}} \left( |y_1 - y^i_{1,d}|^2 + |y_2 - y^i_{2,d}|^2 \right) \,dx\,dt \\
& + \frac{\beta_i}{2} \mathbb{E} \iint_{(0,T) \times G_i} \rho_*^2(t)|v_i|^2 \,dx\,dt, 
\end{aligned}
\end{align}
\begin{align}\label{functji1sec}
\widetilde{J}_i(u_1, u_2, u_3; v_1,\dots,v_m) =  \frac{\alpha_i}{2} \mathbb{E} \iint_{(0,T) \times \mathcal{O}_{i,d}} |y_2 - y^i_{2,d}|^2  \,dx\,dt + \frac{\beta_i}{2} \mathbb{E} \iint_{(0,T) \times G_i} |v_i|^2 \,dx\,dt,
\end{align}
\end{enumerate}
where \( \alpha_i > 0 \), \( \beta_i \geq 1 \) are constants, and \( (y_1, y_2) \) is the solution of the system \eqref{eqq1.1}. The function \( \rho_* \) is the penalizing weight function defined in \eqref{weightfustfic}, blows up as \( t \rightarrow 0 \) and \( t \rightarrow T \). This singular behavior leads the leader controls to encounter no obstacle in achieving the null controllability of the system \eqref{eqq1.1}.

For a fixed \( (u_1, u_2, u_3) \in \mathscr{U} \), the \( m \)-tuple \( (v^*_i)_{1 \leq i \leq m} \in \mathscr{H} \) is called a Nash equilibrium for \( (J_i)_{1 \leq i \leq m} \) (resp.\ \( (\widetilde{J}_i)_{1 \leq i \leq m} \)) associated with the leader control \( (u_1, u_2, u_3) \) if, for any \( i = 1, 2, \dots, m \),
\[
J_i(u_1, u_2, u_3; v^*_1, \dots, v^*_m) = \min_{v \in \mathscr{H}_i} J_i(u_1, u_2, u_3; v^*_1, \dots, v^*_{i-1}, v, v^*_{i+1}, \dots, v^*_m),
\]
\[
\Big(\textnormal{resp.}\,\ \widetilde{J}_i(u_1, u_2, u_3; v^*_1, \dots, v^*_m) = \min_{v \in \mathscr{H}_i} \widetilde{J}_i(u_1, u_2, u_3; v^*_1, \dots, v^*_{i-1}, v, v^*_{i+1}, \dots, v^*_m)\Big).
\]
Since \( (J_i)_{1 \leq i \leq m} \) (resp.\ \( (\widetilde{J}_i)_{1 \leq i \leq m} \)) are differentiable and convex, the \( m \)-tuple \( (v^*_i)_{1 \leq i \leq m} \in \mathscr{H} \) is a Nash equilibrium for \( (J_i)_{1 \leq i \leq m} \) (resp.\ \( (\widetilde{J}_i)_{1 \leq i \leq m} \)) if and only if
\begin{equation}\label{NE7}
\left\langle \frac{\partial J_i}{\partial v_i}(u_1, u_2, u_3; v^*_1, \dots, v^*_m), v_i \right\rangle_{\mathscr{H}_i} = 0 \quad \text{for any} \quad v_i \in \mathscr{H}_i, \quad i = 1, 2, \dots, m,
\end{equation}
\begin{equation}\label{NE7sec}
\Big(\text{resp.\,\ }\left\langle \frac{\partial \widetilde{J}_i}{\partial v_i}(u_1, u_2, u_3; v^*_1, \dots, v^*_m), v_i \right\rangle_{\mathscr{H}_i} = 0 \quad \text{for any} \quad v_i \in \mathscr{H}_i, \quad i = 1, 2, \dots, m \Big).
\end{equation}
More specifically, our aim is to prove that for any initial state \( y^0=(y^0_1,y^0_2) \in L^2_{\mathcal{F}_0}(\Omega; L^2(G; \mathbb{R}^2)) \), there exists a control triple \( (u_1, u_2, u_3) \in \mathscr{U} \) that minimizes the functional \( J \), and an associated Nash equilibrium \( (v^*_i(u_1, u_2, u_3))_{1 \leq i \leq m} \in \mathscr{H} \), such that the corresponding solution \( y = (y_1, y_2) \) of the system \eqref{eqq1.1} satisfies that
\begin{equation}\label{ncontrol}
y(T, \cdot) = 0 \quad \text{in} \; G, \quad \mathbb{P}\text{-a.s.}
\end{equation}
To achieve this, we adopt the Stackelberg-Nash strategy: For fixed leader controls \( (u_1, u_2, u_3) \in \mathscr{U} \), we first determine the \( m \)-tuple Nash equilibrium for the functionals \( (J_i)_{1 \leq i \leq m} \) (resp. \( (\widetilde{J}_i)_{1 \leq i \leq m} \)). This involves finding the follower controls \( v^*_i(u_1, u_2, u_3) \in \mathscr{H}_i \) (\( i = 1, 2, \dots, m \)) that satisfy the optimality condition \eqref{NE7} (resp. \eqref{NE7sec}). Once the Nash equilibrium has been identified and fixed for each \( (u_1, u_2, u_3) \), we then proceed to determine the control triple \( (\widehat{u}_1, \widehat{u}_2, \widehat{u}_3) \in \mathscr{U} \) such that
\begin{equation*}
J(\widehat{u}_1, \widehat{u}_2, \widehat{u}_3) = \min_{(u_1, u_2, u_3) \in \mathscr{U}} J(u_1, u_2, u_3),
\end{equation*}
and the associated solution of \eqref{eqq1.1} satisfies the null controllability property \eqref{ncontrol}.

\subsection{The main results}

In this paper, we assume the following two assumptions:
\begin{enumerate}[(i)]
\item The control region \( G_0 \) and the observation domains \( \mathcal{O}_{i,d} \) satisfy
\begin{equation}\label{Assump10}
G_0 \cap \mathcal{O}_d \neq \emptyset \quad \text{and} \quad \mathcal{O}_d = \mathcal{O}_{i,d}, \quad i = 1, 2, \dots, m.
\end{equation}

\item The coupling coefficient \( a_{21} \) satisfies
\begin{align}\label{assump1.3}
    a_{21} \geq a_0 > 0 \quad \text{or} \quad -a_{21} \geq a_0 > 0, 
    \quad \text{in } (0, T) \times \mathcal{O}_0,
\end{align}
for an open set $\mathcal{O}_0\subset G_0 \cap \mathcal{O}_d$.
\end{enumerate}

The first main result of this paper is stated as follows.

\begin{thm}\label{th4.1SN}
Assume that the assumptions \eqref{Assump10} and \eqref{assump1.3} hold, and that \( \beta_i \geq1 \) for \( i = 1, 2, \dots, m \) are sufficiently large. Then, for every target functions \( (y^i_{1,d}, y^i_{2,d}) \in \mathscr{H}_{i,d} \) ($i = 1, 2, \dots, m$), and for any initial state \( (y^0_1, y^0_2) \in L^2_{\mathcal{F}_0}(\Omega; L^2(G; \mathbb{R}^2)) \), there exist controls \( (\widehat{u}_1, \widehat{u}_2, \widehat{u}_3) \in \mathscr{U} \) minimizing the functional \( J \), and an associated Nash equilibrium \( (v^*_i)_{1 \leq i \leq m} \in \mathscr{H} \) for the functionals \( (J_i)_{1 \leq i \leq m} \), such that the associated solution \( (\widehat{y}_1, \widehat{y}_2) \) of the system \eqref{eqq1.1} satisfies the controllability property
\[
\widehat{y}_1(T,\cdot) =\widehat{y}_2(T,\cdot) = 0 \quad \textnormal{in} \; G, \quad \mathbb{P}\textnormal{-a.s.}
\]
Moreover, there exists a constant $C>0$ such that
\begin{align*}
\begin{aligned}
&\, |\widehat{u}_1|^2_{L^2_\mathcal{F}(0,T; L^2(G_0))} + |\widehat{u}_2|^2_{L^2_\mathcal{F}(0,T; L^2(G))} + |\widehat{u}_3|^2_{L^2_\mathcal{F}(0,T; L^2(G))} \\
& \leq C \bigg( \mathbb{E} |y^0_1|^2_{L^2(G)} + \mathbb{E} |y^0_2|^2_{L^2(G)} + \sum_{i=1}^m \sum_{j=1}^2 \mathbb{E} \iint_{(0,T) \times \mathcal{O}_d}  |y^i_{j,d}|^2 \,dx\,dt \bigg).
\end{aligned}
\end{align*}
\end{thm}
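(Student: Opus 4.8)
The plan is to follow the classical Stackelberg–Nash program in three stages, converting the hierarchical problem into a single null-controllability problem for a forward–backward coupled system, then closing that problem with Carleman estimates. First I would fix the leader control $(u_1,u_2,u_3)\in\mathscr U$ and analyze the Nash equilibrium for the followers. Writing out the optimality system \eqref{NE7}, the Gâteaux derivatives $\partial J_i/\partial v_i$ involve the state $y$ together with adjoint states $p_i$ solving backward stochastic parabolic equations with source $\alpha_i(y-y_d^i)\chi_{\mathcal O_{i,d}}$ and terminal data zero; the optimality condition forces $v_i^* = -\beta_i^{-1}\rho_*^{-2}(p_i)_1\chi_{G_i}$ (the first component of $p_i$, by the structure of $D=(1,0)^T$). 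Existence and uniqueness of the Nash equilibrium for $\beta_i$ large enough follows from a fixed-point/coercivity argument on $\mathscr H$: the map sending $(v_i)$ to the minimizers is a contraction once the quadratic cross-terms are dominated by the $\beta_i\rho_*^2$-penalty, which is where the largeness of $\beta_i$ and the singular weight $\rho_*$ enter. This part should be essentially parallel to the scalar stochastic treatments in \cite{stakNahSPEs,oukBouElgMan}; assumption \eqref{Assump10} with a common $\mathcal O_d$ is used to make the adjoint sources share one observation set.

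Second, with the equilibrium characterized, the original requirement \eqref{ncontrol} becomes null controllability at time $T$ for the coupled forward–backward system in the unknowns $(y_1,y_2,p_1,\dots,p_m)$, driven only by $(u_1,u_2,u_3)$ through the $\chi_{G_0}$ term and the two diffusion controls. By the standard duality (a penalized Hilbert Uniqueness Method adapted to the stochastic forward–backward setting, as in \cite{lu2021mathematical}), this is equivalent to an observability inequality for the corresponding adjoint system: a backward–forward coupled stochastic parabolic system whose solution $(z_1,z_2,\ldots)$ must be estimated globally by its traces on $(0,T)\times G_0$ together with the $W$-martingale terms. The reduction itself is formal once well-posedness and the energy estimate quoted after \eqref{asteqq1.1} are in hand; the real work is the observability estimate.

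Third — and this is the crux — I would prove the observability inequality via the new Carleman estimates announced in the abstract. The chain is: apply a global Carleman estimate (with the singular weight behaving like $\rho_*$ near $t=0,T$) to each component of the adjoint system; add them with suitable large parameters $\lambda,s$; then absorb the coupling terms. The coupling between the two parabolic components is handled using assumption \eqref{assump1.3}: since $a_{21}$ is bounded below (in absolute value) on $\mathcal O_0\subset G_0\cap\mathcal O_d$, one recovers a local integral of the ``second'' component from an equation in which it appears multiplied by $a_{21}$, thereby transferring observability from the component that is directly observed on $G_0$ to the other; this is the standard trick for cascade/coupled parabolic observability, now in the stochastic forward–backward context. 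The main obstacle I anticipate is precisely making this absorption work simultaneously for the forward equations and the $m$ backward adjoint equations with the matching weights: backward stochastic parabolic Carleman estimates produce extra terms in the diffusion (the $dW$-part) that are not present in the deterministic case, and one must choose the weight so that all local terms land inside $G_0$ (using \eqref{Assump10}) and all global terms on the left can absorb the lower-order coupling after taking $\lambda$ and $s$ large. Once the observability inequality is established with the right-hand side given by the $G_0$-trace plus the martingale corrections, the existence of the leader controls and the quantitative bound follow from the duality/penalization argument, with the final constant depending on $|y^0|$ and $\sum_{i,j}\mathbb E\iint_{(0,T)\times\mathcal O_d}|y^i_{j,d}|^2$ exactly as stated; the $\rho_*$-weight in $J_i$ guarantees the followers do not obstruct this, since $v_i^*$ inherits enough decay at $t=T$.
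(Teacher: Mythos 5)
Your proposal follows essentially the same route as the paper: existence and uniqueness of the Nash equilibrium for large $\beta_i$ by a coercivity argument with the $\rho_*$-weight, characterization through backward adjoint systems giving $v_i^*=-\beta_i^{-1}\rho_*^{-2}z_1^i\chi_{G_i}$, reduction to null controllability of the coupled forward--backward system, observability of the backward--forward adjoint system via Carleman estimates that exploit \eqref{Assump10} and \eqref{assump1.3} to transfer information between the components, and a penalized duality construction of the leader controls yielding the stated bound. The only detail worth flagging is that the observability inequality must also dominate $\sum_{i=1}^m\sum_{j=1}^2\mathbb{E}\iint_Q|\psi_j^i|^2\,dx\,dt$ on its left-hand side (the paper obtains this from the improved Carleman estimate with the modified weight $\overline{\theta}$, nonsingular at $t=0$), since this is precisely what makes the penalized functional coercive in the presence of the targets $y^i_{j,d}$; your sketch uses this implicitly.
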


The second main result  is presented as follows.
\begin{thm}\label{th4.1SNsec}
Assume that the assumptions \eqref{Assump10} and \eqref{assump1.3} hold, and that \( \beta_i \geq1 \) for \( i = 1, 2, \dots, m \) are sufficiently large. Then there exists a positive weight function \( \rho = \rho(t) \), which blows up as \( t \to T \), such that for every target functions \( (y^i_{1,d}, y^i_{2,d}) \in \mathscr{H}_{i,d} \) satisfying
\begin{equation}\label{asspfortargfst}
\mathbb{E} \iint_{(0, T) \times \mathcal{O}_d} \rho^2 |y^i_{j,d}|^2 \,dx\,dt < \infty, \quad i = 1, 2, \dots, m, \quad j = 1, 2,
\end{equation}
and for any initial condition \( (y^0_1, y^0_2)\in L^2_{\mathcal{F}_0}(\Omega; L^2(G; \mathbb{R}^2)) \), there exist controls \( (\widehat{u}_1, \widehat{u}_2, \widehat{u}_3) \in \mathscr{U} \) minimizing the functional \( J \), and an associated Nash equilibrium \( (v^*_i)_{1 \leq i \leq m} \in \mathscr{H} \) for the functionals \( (\widetilde{J}_i)_{1 \leq i \leq m} \), such that the associated solution \( (\widehat{y}_1, \widehat{y}_2) \) of the system \eqref{eqq1.1} satisfies that
\[
\widehat{y}_1(T,\cdot) =\widehat{y}_2(T,\cdot) = 0 \quad \textnormal{in} \; G, \quad \mathbb{P}\textnormal{-a.s.}
\]
Furthermore, there exists a constant $C>0$ such that
\begin{align*}
\begin{aligned}
&\, |\widehat{u}_1|^2_{L^2_\mathcal{F}(0,T; L^2(G_0))} + |\widehat{u}_2|^2_{L^2_\mathcal{F}(0,T; L^2(G))} + |\widehat{u}_3|^2_{L^2_\mathcal{F}(0,T; L^2(G))} \\
& \leq C \bigg( \mathbb{E} |y^0_1|^2_{L^2(G)} + \mathbb{E} |y^0_2|^2_{L^2(G)} + \sum_{i=1}^m \sum_{j=1}^2 \mathbb{E} \iint_{(0,T) \times \mathcal{O}_d}  \rho^2|y^i_{j,d}|^2 \,dx\,dt \bigg).
\end{aligned}
\end{align*}
\end{thm}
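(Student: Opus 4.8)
The plan is to follow the Stackelberg--Nash scheme in three stages: first, for fixed leader controls, characterize the Nash equilibrium of the followers and recast the problem as a null controllability problem for a coupled forward--backward stochastic parabolic system; then reduce this, by duality, to an observability inequality; finally, prove the observability inequality through new global Carleman estimates, which also produce the weight $\rho$ of the statement. \textbf{Step 1 (Nash equilibrium and optimality system).} Fix $(u_1,u_2,u_3)\in\mathscr{U}$. Since each $\widetilde{J}_i$ is strictly convex, continuous and coercive in $v_i$ (using $\beta_i\ge1$), the equilibrium is characterized by \eqref{NE7sec}. Introducing, for $i=1,\dots,m$, the first-order adjoint state $(p^i_1,p^i_2,\widehat{p}^i_1,\widehat{p}^i_2)$ solving the backward stochastic parabolic system
\begin{equation*}
\begin{cases}
dp^i_1=-\bigl[\Delta p^i_1+a_{11}p^i_1+a_{21}p^i_2\bigr]\,dt+\widehat{p}^i_1\,dW(t) & \text{in }Q,\\
dp^i_2=-\bigl[\Delta p^i_2+a_{12}p^i_1+a_{22}p^i_2+\alpha_i\chi_{\mathcal{O}_d}(y_2-y^i_{2,d})\bigr]\,dt+\widehat{p}^i_2\,dW(t) & \text{in }Q,\\
p^i_1=p^i_2=0 & \text{on }\Sigma,\\
p^i_1(T)=p^i_2(T)=0 & \text{in }G,
\end{cases}
\end{equation*}
a computation of $\partial\widetilde{J}_i/\partial v_i$ via It\^o's formula yields $v^*_i=-\tfrac{1}{\beta_i}\chi_{G_i}p^i_1$. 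Substituting into \eqref{eqq1.1} gives the coupled \emph{optimality system}, with $(y_1,y_2)$ forward and $(p^i_1,p^i_2)_{1\le i\le m}$ backward; existence and uniqueness of the Nash equilibrium is then equivalent to its well-posedness, obtained by a fixed-point argument, the dependence of each best response on the other followers' controls being a contraction once the $\beta_i$ are large. The problem becomes: find $(u_1,u_2,u_3)\in\mathscr{U}$ minimizing $J$ among the admissible controls such that the associated solution of the optimality system satisfies $y_1(T,\cdot)=y_2(T,\cdot)=0$.

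\textbf{Step 2 (Duality and observability).} By the classical duality between null controllability and observability, it suffices to prove an observability estimate for the adjoint of the optimality system, which is again a coupled forward--backward stochastic parabolic system: a backward pair $(z_1,z_2,\widehat{z}_1,\widehat{z}_2)$ dual to the forward $y$-equation, coupled with $m$ forward pairs $(\zeta^i_1,\zeta^i_2)$ dual to the backward $p^i$-equations, the two blocks interacting only through the terms $-\tfrac{1}{\beta_i}\chi_{G_i}z_1$ (into the $\zeta^i_1$-equations) and $\alpha_i\chi_{\mathcal{O}_d}\zeta^i_2$ (into the $z_2$-equation). Since $u_1$ acts on $G_0$ in the first equation and $u_2,u_3$ act globally through the diffusion, the admissible observation is $\chi_{G_0}z_1$ together with the martingale parts $\widehat{z}_1,\widehat{z}_2$ on $Q$, and the target inequality is, for a weight $\rho=\rho(t)$ to be specified (blowing up as $t\to T$),
\begin{equation*}
|z(0)|^2_{L^2_{\mathcal{F}_0}(\Omega;L^2(G;\mathbb{R}^2))}+\sum_{i=1}^m\mathbb{E}\iint_{(0,T)\times\mathcal{O}_d}\rho^{-2}|\zeta^i_2|^2\,dx\,dt\le C\left(\mathbb{E}\iint_{Q_0}|z_1|^2\,dx\,dt+\mathbb{E}\iint_Q\bigl(|\widehat{z}_1|^2+|\widehat{z}_2|^2\bigr)\,dx\,dt\right).
\end{equation*}

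\textbf{Step 3 (Carleman estimates -- the main obstacle).} The core is a new global Carleman estimate for this coupled forward--backward adjoint system. I would combine the known Carleman estimate for backward stochastic parabolic equations, applied to $(z_1,z_2)$, with the one for forward stochastic parabolic equations, applied to each $(\zeta^i_1,\zeta^i_2)$, using a common weight $\theta=e^{s\alpha}$ with $\alpha(t,x)=\bigl(e^{\lambda\psi(x)}-e^{2\lambda\|\psi\|_{L^\infty(G)}}\bigr)/(T-t)$, where $\psi\in C^2(\overline{G})$ vanishes on $\Gamma$ and has $|\nabla\psi|>0$ outside a small subset of $\mathcal{O}_0$; the weight $\rho$ of the statement is then the blow-up profile of $e^{-s\alpha}$ as $t\to T$ (it can be chosen regular at $t=0$). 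Summing the two estimates, all resulting local terms are supported in $\mathcal{O}_0\subset G_0\cap\mathcal{O}_d$; hypothesis \eqref{assump1.3}, namely $|a_{21}|\ge a_0>0$ on $(0,T)\times\mathcal{O}_0$, then lets one recover $z_2$, and likewise the unobserved components of the $\zeta^i$, from the observed quantity on $\mathcal{O}_0$, at the price of integrations by parts that generate higher powers of $s,\lambda$, which are absorbed for $s,\lambda$ large. The coupling terms $\chi_{G_i}z_1$ and $\chi_{\mathcal{O}_d}\zeta^i_2$ are absorbed by taking $s,\lambda$ and $\beta_i$ large. This yields the weighted estimate above, the $|z(0)|^2$ term being restored by standard backward energy estimates on $(0,T/2)$. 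I expect this step to be the main difficulty: one must build a single Carleman inequality for a system that is simultaneously forward and backward in time, coupled in cascade, and observed only through one scalar component in $G_0$; this forces a careful choice of weights compatible with both forward and backward It\^o calculus, a transfer of information through the coefficient $a_{21}$ on the small set $\mathcal{O}_0$ while controlling the Laplacian cross-terms it produces, and precise bookkeeping of the blow-up at $t=T$ that defines $\rho$.

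\textbf{Step 4 (Conclusion).} In the duality identity used to construct the control, the inhomogeneity $\alpha_i\chi_{\mathcal{O}_d}(y_2-y^i_{2,d})$ of the $p^i$-equations pairs with $\zeta^i_2$; estimating $\mathbb{E}\iint_{(0,T)\times\mathcal{O}_d}y^i_{2,d}\zeta^i_2$ by Cauchy--Schwarz, split as $(\rho\,y^i_{2,d})(\rho^{-1}\zeta^i_2)$, produces exactly the term $\mathbb{E}\iint_{(0,T)\times\mathcal{O}_d}\rho^2|y^i_{j,d}|^2\,dx\,dt$, finite under \eqref{asspfortargfst}, while the other factor is controlled by the observability estimate. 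A standard duality/penalization argument -- minimizing a suitable quadratic functional over the adjoint states and passing to the limit -- then produces a leader control $(\widehat{u}_1,\widehat{u}_2,\widehat{u}_3)\in\mathscr{U}$ that drives the optimality system to rest at time $T$, together with the stated quantitative bound. Finally, the set of leader controls achieving \eqref{ncontrol} being nonempty, closed and convex and $J$ being strictly convex and coercive, it has a unique minimizer, which is the sought $(\widehat{u}_1,\widehat{u}_2,\widehat{u}_3)$; this completes the proof.
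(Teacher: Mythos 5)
Your proposal follows essentially the same route as the paper: characterization of the Nash equilibrium through the backward adjoint pairs giving $v^*_i=-\tfrac{1}{\beta_i}\chi_{G_i}z^i_1$, reduction to null controllability of the coupled forward--backward optimality system, duality to a weighted observability inequality, a Carleman estimate for the adjoint system obtained by combining the known forward and backward estimates and transferring the observation through $a_{21}$ on $\mathcal{O}_0\subset G_0\cap\mathcal{O}_d$ (with the coupling terms absorbed for large parameters and large $\beta_i$), and finally a penalized variational construction in which the $\rho\cdot\rho^{-1}$ Cauchy--Schwarz split of the target terms produces exactly condition \eqref{asspfortargfst} and the stated bound. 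The minor deviations (a fixed-point argument instead of Lax--Milgram for the equilibrium, and a weight singular only at $t=T$ from the start rather than the paper's standard weight later modified on $(0,T/2)$) do not change the substance of the argument.
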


Some remarks are in order.

\begin{rmk}
The Stackelberg-Nash controllability problem for the system \eqref{eqq1.1}, where only the leader control \( u_1 \) is present or where the leader controls \( u_2 \) and \( u_3 \) are localized in specific subsets of the domain \( G \), remains an open problem. In these cases, establishing the appropriate observability inequalities seems to be a challenging problem.
\end{rmk}

\begin{rmk}
From Section \ref{section3} on Carleman estimates, it can be seen that Theorems \ref{th4.1SN} and \ref{th4.1SNsec} still hold when considering the system \eqref{eqq1.1} with more general diffusion terms involving both the states \( y_1 \) and \( y_2 \), specifically:
$$
[b_{11}y_1 + b_{12}y_2 + u_2] \, dW(t)
\quad \textnormal{and} \quad
[b_{21}y_1 + b_{22}y_2 + u_3] \, dW(t),
$$
where \( b_{ij} \in L^\infty_\mathcal{F}(0,T; L^\infty(G)) \). These terms are not introduced in  \eqref{eqq1.1} for the sake of simplicity of notations and to focus on the essential structure of the Stackelberg-Nash controllability problem.
\end{rmk}

\begin{rmk}
The condition \eqref{asspfortargfst} appears natural, as it implies that the target functions \( y^i_{j,d} \) tend to zero as \( t \to T \). This condition reflects the idea that the leader controls should not face any obstruction when attempting to control the system at time $t=T$. It remains an open question whether this condition is indeed necessary, even in the scalar deterministic and stochastic cases (see, e.g., \cite{ArCaSa15,stakNahSPEs}).
\end{rmk}

\begin{rmk}
The geometric condition \( O_d = O_{i,d} \) (\( i = 1, 2, \dots, m \)) in the assumption \eqref{Assump10} is essential for studying the Stackelberg-Nash controllability for the system \eqref{eqq1.1}. More precisely, this condition is used to derive our Carleman estimates in Section \ref{section3}. At present, we do not know whether these estimates remain valid for the case \( O_{i,d} \neq O_{j,d} \) for some \( i, j \in \{1, 2, \dots, m\} \), where \( i \neq j \). In the deterministic case, Carleman estimates with appropriate weight functions can be used to consider different configurations of the observation sets \( O_{i,d} \) (see \cite{ArFeGu17}). Therefore, it is of interest to investigate the Stackelberg-Nash controllability problem for \eqref{eqq1.1} with, at least, such alternative configurations of the sets \( O_{i,d} \).
\end{rmk}

The rest of this paper is organized as follows. In the next section, we establish the existence, uniqueness, and characterization of the Nash equilibrium for the functionals \( (J_i)_{1 \leq i \leq m} \) and \( (\widetilde{J}_i)_{1 \leq i \leq m} \). Section \ref{section3} is devoted to deriving new Carleman estimates for certain coupled adjoint backward-forward systems. In Section \ref{section4}, we prove the main controllability results, as stated in Theorems \ref{th4.1SN} and \ref{th4.1SNsec}. Finally, in Section \ref{secc5}, we present some concluding remarks.

\section{Nash equilibrium}
In this section, we establish the existence, uniqueness, and characterization of the Nash equilibrium for the functionals \( (J_i)_{1 \leq i \leq m} \) (and similarly for \( (\widetilde{J}_i)_{1 \leq i \leq m} \)) in the sense of \eqref{NE7} (and \eqref{NE7sec} for the second case). For simplicity, we provide the proof for the functionals \( (J_i)_{1 \leq i \leq m} \); analogous arguments and computations can be  applied to \( (\widetilde{J}_i)_{1 \leq i \leq m} \).

\subsection{Existence and uniqueness of the Nash equilibrium for \( (J_i)_{1 \leq i \leq m} \)}

We have the following result concerning the existence and uniqueness of the Nash equilibrium for the functionals \( (J_i)_{1 \leq i \leq m} \).

\begin{prop}\label{propp4.1}
There exists a large \( \overline{\beta} \geq1 \) such that, if \( \beta_i \geq \overline{\beta} \) for \( i = 1, 2, \dots, m \), then for each \( (u_1, u_2, u_3) \in \mathscr{U} \), there exists a unique Nash equilibrium \( (v^*_i(u_1, u_2, u_3))_{1 \leq i \leq m} \in \mathscr{H} \) for the functionals \( (J_i)_{1 \leq i \leq m} \) associated with \( (u_1, u_2, u_3) \).
\end{prop}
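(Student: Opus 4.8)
The plan is to reformulate the Nash-equilibrium condition \eqref{NE7} as a linear system in the variables $(v_1^*,\dots,v_m^*)\in\mathscr{H}$ and then show that the associated operator is a small perturbation of a coercive one when the $\beta_i$ are large. First I would compute the Gâteaux derivative $\partial J_i/\partial v_i$ explicitly. Writing $y=(y_1,y_2)$ for the solution of \eqref{eqq1.1} with data $(u_1,u_2,u_3;v_1,\dots,v_m)$, decompose $y = \zeta + \sum_{k=1}^m z_k$, where $\zeta$ solves \eqref{eqq1.1} with the followers switched off and $z_k$ solves the same system with zero initial data and only the source $v_k\chi_{G_k}$. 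Introducing, for each $i$, the backward adjoint state $(\varphi_i,\psi_i)$ associated with the observation $\alpha_i\chi_{\mathcal{O}_{i,d}}(y-y_d^i)$ on the two components, an integration by parts (Itô's formula applied to $\langle z_k,\varphi_i\rangle$) shows that \eqref{NE7} is equivalent to
\[
\beta_i\,\rho_*^2\, v_i^* + \varphi_i\big|_{G_i} = 0 \quad\text{in } \mathscr{H}_i,\qquad i=1,\dots,m,
\]
where $\varphi_i$ depends affinely on all of $(v_1^*,\dots,v_m^*)$.

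Next I would set this up as a fixed-point / Lax–Milgram problem. Define the bounded linear operator $\mathcal{L}:\mathscr{H}\to\mathscr{H}$ sending $(v_k)_k$ to $(\varphi_i|_{G_i})_i$ computed with zero leader controls and zero initial data (the ``homogeneous'' part), and let $(F_i)_i\in\mathscr{H}$ be the affine part coming from $\zeta$ and the targets $y_d^i$. Then \eqref{NE7} reads $\big(\beta_i\rho_*^2 v_i^* + (\mathcal{L}(v^*))_i + F_i\big)_i = 0$. Testing this identity against $(v_i^*)_i$ and using the energy estimate for \eqref{eqq1.1} together with the standard estimate for the backward adjoint system, one gets $|(\mathcal{L}(v^*),v^*)_{\mathscr{H}}|\le C\sum_i |v_i^*|_{\mathscr{H}_i}^2$ with $C$ independent of the $\beta_i$; since $\rho_*\ge c>0$ is bounded below on $(0,T)$ (away from $t=0,T$ it is even bounded, but crucially $\rho_*^2\ge c^2$), the bilinear form $(v,w)\mapsto \sum_i\beta_i\langle\rho_*^2 v_i,w_i\rangle_{\mathscr{H}_i} + (\mathcal{L}v,w)_{\mathscr{H}}$ is coercive on $\mathscr{H}$ once $\min_i\beta_i$ exceeds $C/c^2 =: \overline{\beta}$. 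It is also obviously bounded, so Lax–Milgram yields a unique $(v_i^*)_i\in\mathscr{H}$ solving \eqref{NE7}, which by the convexity/differentiability remark preceding the statement is exactly the unique Nash equilibrium.

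Two technical points need care. First, the weight $\rho_*$ blows up at $t=0$ and $t=T$, so one must check that the operator $\mathcal{L}$ is well defined and bounded on $\mathscr{H}$ despite $v_i\in\mathscr{H}_i$ only being $L^2$ in time without a weight; this is fine because $\mathcal{L}$ involves no weight (the $\rho_*^2$ sits only on the $\beta_i$ term) and the map $v_i\mapsto z_i\mapsto \varphi_i$ is a composition of bounded solution operators. The same reasoning gives $F_i\in\mathscr{H}_i$ using $y_d^i\in\mathscr{H}_{i,d}$ and $(u_1,u_2,u_3)\in\mathscr{U}$. Second, although the $L^\infty$ bound on the coupling matrix $A$ and the constants $\alpha_i$ enter the constant $C$, they do not depend on the $\beta_i$, so the threshold $\overline{\beta}$ is uniform in the leader control and the data — exactly what the statement requires. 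The main obstacle is organizing the bookkeeping in the decomposition $y=\zeta+\sum_k z_k$ and identifying the adjoint states correctly so that the cross-terms in $(\mathcal{L}v,v)_{\mathscr{H}}$ are genuinely controlled by $\sum_i|v_i|_{\mathscr{H}_i}^2$ with a $\beta$-independent constant; once that is in place, coercivity and Lax–Milgram finish the argument routinely.
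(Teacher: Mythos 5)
Your proposal is correct and follows essentially the same route as the paper: decompose $y$ into the leader/initial-data part plus the follower contributions (the paper's $q+\sum_i\Lambda_i(v_i)$), rewrite \eqref{NE7} as a linear equation $\beta_i\rho_*^2 v_i^*+(\text{coupling term})_i=F_i$ in $\mathscr{H}$, and obtain coercivity for large $\beta_i$ from the lower bound of $\rho_*$ together with a $\beta$-independent bound on the coupling part, concluding by Lax--Milgram. The only cosmetic difference is that you express the derivative of $J_i$ through the backward adjoint states (the paper's system \eqref{backadj}, which it introduces only later for the characterization), whereas the paper works abstractly with the adjoint operators $\Lambda_i^*$; the substance of the argument is identical.
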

\begin{proof}
Consider the linear bounded operators \( \Lambda_i \in \mathcal{L}(\mathscr{H}_i; L_\mathcal{F}^2(0,T; L^2(G;\mathbb{R}^2))) \) defined by
\begin{align*}
\Lambda_i(v_i) = (y_1^i, y_2^i), \qquad i = 1, 2, \dots, m,
\end{align*}
where \( (y_1^i, y_2^i) \) is the solution of the following forward equation
\begin{equation}\label{1.10}
\begin{cases}
\begin{array}{ll}
dy_1^i - \Delta y_1^i \, dt = \left[a_{11}\,y_1^i + a_{12}\,y_2^i + v_i \chi_{G_i}\right] \, dt & \text{in} \, Q, \\
dy_2^i - \Delta y_2^i \, dt = \left[a_{21}\,y_1^i + a_{22}\,y_2^i\right] \, dt & \text{in} \, Q, \\
y_1^i = y_2^i = 0 & \text{on} \, \Sigma, \\
y_1^i(0) = y_2^i(0) = 0 & \text{in} \, G.
\end{array}
\end{cases}
\end{equation}
It is easy to see that the solution \( y = (y_1, y_2) \) of the system \eqref{eqq1.1} can be written as 
\begin{align}\label{equofy}
y = \sum_{i=1}^m \Lambda_i(v_i) + q,
\end{align}
where \( q = (q_1,q_2) \) is the solution of the forward equation
\begin{equation*}\label{1.132}
\begin{cases}
\begin{array}{ll}
dq_1 - \Delta q_1 \, dt = \left[a_{11}\,q_1 + a_{12}\,q_2 + u_1 \chi_{G_0}\right] \, dt +  u_2\,dW(t) & \text{in} \, Q, \\
dq_2 - \Delta q_2 \, dt = \left[a_{21}\,q_1 + a_{22}\,q_2\right] \, dt + u_3\,dW(t) & \text{in} \, Q, \\
q_1 =q_2= 0 & \text{on} \, \Sigma, \\
q_1(0) = y_1^0,\;\;q_2(0) = y_2^0 & \text{in} \, G.
\end{array}
\end{cases}
\end{equation*}
Let \( (u_1, u_2, u_3) \in \mathscr{U} \). Then, it is not difficult to see that for any \( v_i \in \mathscr{H}_i \),
\begin{align*}
\left\langle \frac{\partial J_i}{\partial v_i}(u_1, u_2, u_3; v^*_1, \dots, v^*_m), v_i \right\rangle_{\mathscr{H}_i} &= \alpha_i \left\langle \sum_{j=1}^m \Lambda_j(v^*_j) + q - y^i_d, \Lambda_i(v_i) \right\rangle_{\mathscr{H}_{i,d}} + \beta_i \left\langle \rho_*^2v^*_i, v_i \right\rangle_{\mathscr{H}_i},
\end{align*}
for every \( i = 1, 2, \dots, m \). Thus, \( (v^*_1, \dots, v^*_m) \) is a Nash equilibrium for \( (J_i)_{1 \leq i \leq m} \) if and only if
\begin{align}\label{4.3nashcara}
\alpha_i \left\langle \sum_{j=1}^m \Lambda_j(v^*_j) + q - y^i_d, \Lambda_i(v_i) \right\rangle_{\mathscr{H}_{i,d}} + \beta_i \left\langle \rho_*^2v^*_i, v_i \right\rangle_{\mathscr{H}_i}=0, \quad \forall v_i \in \mathscr{H}_i.
\end{align}
Equivalently, we have
\begin{align*}
\alpha_i \left\langle \Lambda_i^*\left[\sum_{j=1}^m \Lambda_j(v^*_j)\chi_{\mathcal{O}_{i,d}} - (y^i_d - q)\chi_{\mathcal{O}_{i,d}}\right], v_i \right\rangle_{\mathscr{H}_i} + \beta_i \left\langle \rho_*^2v^*_i, v_i \right\rangle_{\mathscr{H}_i} = 0, \quad \forall v_i \in \mathscr{H}_i.
\end{align*}
That is
\[
\alpha_i \Lambda_i^*\left[ \sum_{j=1}^m \Lambda_j(v^*_j)\chi_{\mathcal{O}_{i,d}} \right] + \beta_i \rho_*^2v^*_i = \alpha_i \Lambda_i^*((y^i_d - q)\chi_{\mathcal{O}_{i,d}}) \quad \text{in} \; \mathscr{H}_i, \quad i = 1, 2, \dots, m,
\]
where \( \Lambda_i^* \in \mathcal{L}(L_\mathcal{F}^2(0,T; L^2(G;\mathbb{R}^2)); \mathscr{H}_i) \) is the adjoint operator of \( \Lambda_i \). The problem is now reduced to proving that there exists a unique \( (v_1^*, \dots, v_m^*) \in \mathscr{H} \) such that
\begin{align}\label{oureqfornash}
\mathcal{L}(v_1^*, \dots, v_m^*) = (\alpha_1 \Lambda_1^*((y^1_d - q)\chi_{\mathcal{O}_{1,d}}), \dots, \alpha_m \Lambda_m^*((y^m_d - q)\chi_{\mathcal{O}_{m,d}})),
\end{align}
where \( \mathcal{L} : \mathscr{H} \to \mathscr{H} \) is the bounded operator defined by
\[
\mathcal{L}(v_1, \dots, v_m) = \left(\alpha_1 \Lambda_1^*\left[\sum_{j=1}^m \Lambda_j(v_j)\chi_{\mathcal{O}_{1,d}}\right] + \beta_1 \rho_*^2v_1, \dots, \alpha_m \Lambda_m^*\left[\sum_{j=1}^m \Lambda_j(v_j)\chi_{\mathcal{O}_{m,d}}\right] + \beta_m \rho_*^2v_m\right).
\]
It is easy to see that for any \( (v_1, \dots, v_m) \in \mathscr{H} \), we have
\[
\langle \mathcal{L}(v_1, \dots, v_m), (v_1, \dots, v_m) \rangle_\mathscr{H} = \sum_{i=1}^m \beta_i |\rho_*v_i|_{\mathscr{H}_i}^2 + \sum_{i,j=1}^m \alpha_i \langle \Lambda_j(v_j), \Lambda_i(v_i) \rangle_{\mathscr{H}_{i,d}}.
\]
This implies that
\begin{align}\label{estimonL}
\langle \mathcal{L}(v_1, \dots, v_m), (v_1, \dots, v_m) \rangle_\mathscr{H} \geq \sum_{i=1}^m (\beta_i\rho_0^2 - C) |v_i|_{\mathscr{H}_i}^2,
\end{align}
where \( \rho_0 = \min_{t \in [0,T]} \rho_*(t) \). By choosing a sufficiently large \( \overline{\beta} \geq 1 \) in \eqref{estimonL}, one can conclude that for any \( \beta_i \geq \overline{\beta} \), \( i = 1, 2, \dots, m \), we have that
\begin{align}\label{estimonLsecond}
\langle \mathcal{L}(v_1, \dots, v_m), (v_1, \dots, v_m) \rangle_\mathscr{H} \geq C |(v_1, \dots, v_m)|_\mathscr{H}^2.
\end{align}
Let us now introduce the bilinear continuous  functional \( \textbf{b}: \mathscr{H} \times \mathscr{H} \to \mathbb{R} \) as follows:
\[
\textbf{b}((v_1, \dots, v_m), (\widetilde{v}_1, \dots, \widetilde{v}_m)) = \left\langle \mathcal{L}(v_1, \dots, v_m), (\widetilde{v}_1, \dots, \widetilde{v}_m) \right\rangle_\mathscr{H},
\]
for any \( \big((v_1, \dots, v_m), (\widetilde{v}_1, \dots, \widetilde{v}_m)\big) \in \mathscr{H} \times \mathscr{H} \). We also define the linear functional \( \Psi: \mathscr{H} \to \mathbb{R} \) by
\[
\Psi(v_1, \dots, v_m) = \left\langle (v_1, \dots, v_m), \left( \alpha_1 \Lambda_1^*((y^1_d - q)\chi_{\mathcal{O}_{1,d}}), \dots, \alpha_m \Lambda_m^*((y^m_d - q)\chi_{\mathcal{O}_{m,d}}) \right) \right\rangle_\mathscr{H},
\]
for any \( (v_1, \dots, v_m) \in \mathscr{H} \). From \eqref{estimonLsecond}, it is easy to see that the bilinear functional \( \textbf{b} \) is coercive. Then, by Lax-Milgram theorem, there exists a unique \( (v^*_1, \dots, v^*_m) \in \mathscr{H} \) such that
\[
\textbf{b}((v^*_1, \dots, v^*_m), (v_1, \dots, v_m)) = \Psi(v_1, \dots, v_m) \quad \text{for any} \quad (v_1, \dots, v_m) \in \mathscr{H}.
\]
Thus, the obtained \( (v^*_1, \dots, v^*_m) \) satisfies \eqref{oureqfornash}, and therefore it is the desired Nash equilibrium for the functionals \( (J_i)_{1 \leq i \leq m} \). This completes the proof of Proposition \ref{propp4.1}.
\end{proof}
\subsection{Characterization of the Nash equilibrium for \( (J_i)_{1 \leq i \leq m} \)}
To characterize the Nash equilibrium \( (v^*_1, \dots, v^*_m) \) derived in the previous subsection, we introduce the following backward adjoint parabolic system
\begin{equation}\label{backadj}
\begin{cases} 
\begin{array}{ll}
dz_1^{i} + \Delta z_1^i \, dt = \left[-a_{11} z_1^i - a_{21} z_2^i  - \alpha_i(y_1 - y^i_{1,d}) \chi_{\mathcal{O}_{i,d}} \right] \, dt + Z_1^i \, dW(t) & \text{in} \, Q, \\
dz_2^{i} + \Delta z_2^i \, dt = \left[-a_{12} z_1^i - a_{22} z_2^i  - \alpha_i (y_2 - y^i_{2,d}) \chi_{\mathcal{O}_{i,d}} \right] \, dt + Z_2^i \, dW(t) & \text{in} \, Q, \\
z_k^i = 0, \qquad i = 1, 2, \dots, m, \quad k = 1, 2, & \text{on} \, \Sigma, \\
z_k^i(T) = 0, \qquad i = 1, 2, \dots, m, \quad k = 1, 2, & \text{in} \, G.
\end{array}
\end{cases}
\end{equation}
Applying Itô's formula for the systems \eqref{1.10} and \eqref{backadj}, we obtain that
\begin{equation}\label{equa3.6}
\alpha_i \left\langle y - y^i_d, \Lambda_i(v_i) \right\rangle_{\mathscr{H}_{i,d}} = \left\langle z_1^i, v_i \right\rangle_{\mathscr{H}_i}, \qquad i = 1, 2, \dots, m,
\end{equation}
where $y=(y_1(u_1, u_2,u_3; v_1, \dots,v_m),y_2(u_1, u_2,u_3; v_1, \dots,v_m))$ is the solution of \eqref{eqq1.1}. By combining \eqref{equa3.6}, \eqref{4.3nashcara}, and \eqref{equofy}, we deduce that \( (v^*_1, \dots, v^*_m) \) is a Nash equilibrium if and only if the following condition holds:
\[
\left\langle z_1^i, v_i \right\rangle_{\mathscr{H}_i} + \beta_i \left\langle \rho_*^2v^*_i, v_i \right\rangle_{\mathscr{H}_i} = 0 \quad \text{for any} \quad v_i \in \mathscr{H}_i, \qquad i = 1, 2, \dots, m.
\]
This leads to the following characterization of the Nash equilibrium
\begin{equation*}
v^*_i = -\frac{1}{\beta_i}\rho_*^{-2} z_1^i|_{(0,T) \times G_i}, \qquad i = 1, 2, \dots, m.
\end{equation*}

Therefore, the Stackelberg-Nash controllability problem stated in Theorem \ref{th4.1SN} reduces to establishing the classical null controllability for the following coupled forward-backward stochastic system 
\begin{equation}\label{eqq4.7}
\begin{cases}
\begin{array}{ll}
dy_1 - \Delta y_1 \, dt = \left[a_{11} y_1 + a_{12} y_2 + u_1 \chi_{G_0} - \displaystyle\sum_{i=1}^m \frac{1}{\beta_i}\rho_*^{-2} z_1^i \chi_{G_i}\right] \, dt + u_2 \, dW(t) & \text{in} \, Q, \\
dy_2 - \Delta y_2 \, dt = \left[a_{21} y_1 + a_{22} y_2 \right] \, dt + u_3 \, dW(t) & \text{in} \, Q, \\
dz_1^i + \Delta z_1^i \, dt = \left[-a_{11} z_1^i - a_{21} z_2^i  - \alpha_i (y_1 - y^i_{1,d}) \chi_{\mathcal{O}_{i,d}}\right] \, dt + Z_1^i \, dW(t) & \text{in} \, Q, \\
dz_2^i + \Delta z_2^i \, dt = \left[-a_{12} z_1^i - a_{22} z_2^i  - \alpha_i (y_2 - y^i_{2,d}) \chi_{\mathcal{O}_{i,d}}\right] \, dt + Z_2^i \, dW(t) & \text{in} \, Q, \\
y_k = z^i_k = 0, \qquad i = 1, 2, \dots, m, \quad k = 1, 2, & \text{on} \, \Sigma, \\
y_k(0) = y_k^0, \quad z_k^i(T) = 0, \qquad i = 1, 2, \dots, m, \quad k = 1, 2, & \text{in} \, G.
\end{array}
\end{cases}
\end{equation}
Using the classical duality argument, the null controllability of \eqref{eqq4.7} is reduced to establishing an observability inequality for the following adjoint backward-forward stochastic system 
\begin{equation}\label{ADJSO1}
\begin{cases}
\begin{array}{ll}
d\phi_1 + \Delta \phi_1 \, dt = \left[-a_{11} \phi_1 - a_{21} \phi_2  + \displaystyle\sum_{i=1}^m \alpha_i \psi_1^i \chi_{\mathcal{O}_{i,d}}\right] \, dt + \Phi_1 \, dW(t) & \text{in} \, Q, \\
d\phi_2 + \Delta \phi_2 \, dt = \left[-a_{12} \phi_1 - a_{22} \phi_2  + \displaystyle\sum_{i=1}^m \alpha_i \psi_2^i \chi_{\mathcal{O}_{i,d}}\right] \, dt + \Phi_2 \, dW(t) & \text{in} \, Q, \\
d\psi_1^i - \Delta \psi_1^i \, dt = \left[a_{11} \psi_1^i + a_{12} \psi_2^i + \frac{1}{\beta_i} \rho_*^{-2} \phi_1\chi_{G_i}\right] \, dt & \text{in} \, Q, \\
d\psi_2^i - \Delta \psi_2^i \, dt = \left[a_{21} \psi_1^i + a_{22} \psi_2^i \right] \, dt  & \text{in} \, Q, \\
\phi_k = \psi_k^i = 0, \qquad i = 1, 2, \dots, m, \quad k = 1, 2, & \text{on} \, \Sigma, \\
\phi_k(T) = \phi_k^T, \quad \psi_k^i(0) = 0, \qquad i = 1, 2, \dots, m, \quad k = 1, 2, & \text{in} \, G.
\end{array}
\end{cases}
\end{equation}

\subsection{Nash equilibrium for \( (\widetilde{J}_i)_{1 \leq i \leq m} \)}

Similarly to the previous subsections, we can establish the existence and uniqueness of the Nash equilibrium \( (v^*_1, \dots, v^*_m) \) for the functionals \( (\widetilde{J}_i)_{1 \leq i \leq m} \), provided that \( \beta_i \) are sufficiently large. Furthermore, we show that the \( m \)-tuple \( (v^*_1, \dots, v^*_m) \) is a Nash equilibrium for \( (\widetilde{J}_i)_{1 \leq i \leq m} \) if and only if
\[
v^*_i = -\frac{1}{\beta_i} z_1^i \big|_{(0,T) \times G_i}, \quad i = 1, 2, \dots, m,
\]
where \( (z_1^i, z_2^i;Z_1^i, Z_2^i) \), for \( i = 1, 2, \dots, m \), is the solution to the following backward adjoint system
\begin{equation}\label{backadjsec}
\begin{cases} 
\begin{array}{ll}
dz_1^{i} + \Delta z_1^i \, dt = \left[-a_{11} z_1^i - a_{21} z_2^i \right] \, dt + Z_1^i \, dW(t) & \text{in} \, Q, \\
dz_2^{i} + \Delta z_2^i \, dt = \left[-a_{12} z_1^i - a_{22} z_2^i - \alpha_i (y_2 - y^i_{2,d}) \chi_{\mathcal{O}_{i,d}} \right] \, dt + Z_2^i \, dW(t) & \text{in} \, Q, \\
z_k^i = 0, \qquad i = 1, 2, \dots, m,\quad k = 1, 2, & \text{on} \, \Sigma, \\
z_k^i(T) = 0, \qquad i = 1, 2, \dots, m,\quad k = 1, 2, & \text{in} \, G.
\end{array}
\end{cases}
\end{equation}

Thus, the Stackelberg-Nash controllability problem presented in Theorem \ref{th4.1SNsec} is reduced to establishing the null controllability of the following coupled forward-backward system
\begin{equation}\label{eqq4.7sec}
\begin{cases}
\begin{array}{ll}
dy_1 - \Delta y_1 \, dt = \left[a_{11} y_1 + a_{12} y_2 + u_1 \chi_{G_0} - \displaystyle\sum_{i=1}^m \frac{1}{\beta_i} z_1^i \chi_{G_i}\right] \, dt + u_2 \, dW(t) & \text{in} \, Q, \\
dy_2 - \Delta y_2 \, dt = \left[a_{21} y_1 + a_{22} y_2 \right] \, dt + u_3 \, dW(t) & \text{in} \, Q, \\
dz_1^i + \Delta z_1^i \, dt = \left[-a_{11} z_1^i - a_{21} z_2^i\right] \, dt + Z_1^i \, dW(t) & \text{in} \, Q, \\
dz_2^i + \Delta z_2^i \, dt = \left[-a_{12} z_1^i - a_{22} z_2^i  - \alpha_i (y_2 - y^i_{2,d}) \chi_{\mathcal{O}_{i,d}}\right] \, dt + Z_2^i \, dW(t) & \text{in} \, Q, \\
y_k = z^i_k = 0, \quad i = 1, 2, \dots, m, \quad k = 1, 2, & \text{on} \, \Sigma, \\
y_k(0) = y_k^0, \quad z_k^i(T) = 0, \quad i = 1, 2, \dots, m, \quad k = 1, 2, & \text{in} \, G.
\end{array}
\end{cases}
\end{equation}
The null controllability of \eqref{eqq4.7sec} can be derived by proving an observability inequality for the corresponding adjoint backward-forward system
\begin{equation}\label{ADJSO1sec}
\begin{cases}
\begin{array}{ll}
d\phi_1 + \Delta \phi_1 \, dt = \left[-a_{11} \phi_1 - a_{21} \phi_2\right] \, dt + \Phi_1 \, dW(t) & \text{in} \, Q, \\
d\phi_2 + \Delta \phi_2 \, dt = \left[-a_{12} \phi_1 - a_{22} \phi_2  + \displaystyle\sum_{i=1}^m \alpha_i \psi_2^i \chi_{\mathcal{O}_{i,d}}\right] \, dt + \Phi_2 \, dW(t) & \text{in} \, Q, \\
d\psi_1^i - \Delta \psi_1^i \, dt = \left[a_{11} \psi_1^i + a_{12} \psi_2^i + \frac{1}{\beta_i}  \phi_1\chi_{G_i} \right] \, dt & \text{in} \, Q, \\
d\psi_2^i - \Delta \psi_2^i \, dt = \left[a_{21} \psi_1^i + a_{22} \psi_2^i \right] \, dt & \text{in} \, Q, \\
\phi_k = \psi_k^i = 0, \quad i = 1, 2, \dots, m, \quad k = 1, 2, & \text{on} \, \Sigma, \\
\phi_k(T) = \phi_k^T, \quad \psi_k^i(0) = 0, \quad i = 1, 2, \dots, m, \quad k = 1, 2, & \text{in} \, G.
\end{array}
\end{cases}
\end{equation}
\begin{rmk}
The systems \eqref{ADJSO1} and \eqref{ADJSO1sec} are almost identical, with the only difference being in the right-hand side of the first and third equations. In the next section, we will derive the desired Carleman estimates for these systems using similar techniques. However, the term ``\( \sum_{i=1}^m \alpha_i \psi_1^i \chi_{\mathcal{O}_{i,d}} \)'' in the first equation of \eqref{ADJSO1} complicates the derivation of the required Carleman estimate for \eqref{ADJSO1}. For this reason, the term ``\( \frac{1}{\beta_i} \rho_*^{-2} \phi_1 \chi_{G_i} \)'' in the third equation, with the function \( \rho_* \) defined in \eqref{weightfustfic}, will play a crucial role to overcome some computational difficulties in deriving the Carleman estimates. Note that, since the term ``\( \sum_{i=1}^m \alpha_i \psi_1^i \chi_{\mathcal{O}_{i,d}} \)'' does not appear in the first equation of the system \eqref{ADJSO1sec}, the needed Carleman estimate for \eqref{ADJSO1sec} is obtained without the introduction of the function \( \rho_* \).
\end{rmk}
\section{Carleman estimates for the systems \eqref{ADJSO1} and \eqref{ADJSO1sec}}\label{section3}
\subsection{Some preliminaries}
In this subsection, we introduce the necessary notations and briefly recall some well-known Carleman estimates for forward and backward stochastic parabolic equations. These estimates will be essential for deriving our Carleman estimates for the coupled systems \eqref{ADJSO1} and \eqref{ADJSO1sec}. To provide context, we first state the following result (see \cite{BFurIman}).
\begin{lm}\label{lmm5.1}
For any nonempty open subset \( \mathcal{B} \Subset G \), there exists a function \( \eta_0 \in C^4(\overline{G}) \) such that
$$
\eta_0 > 0 \;\; \textnormal{in} \; G, \quad \eta_0 = 0 \;\; \textnormal{on} \; \Gamma, \quad |\nabla \eta_0| > 0 \;\; \textnormal{in} \; \overline{G \setminus \mathcal{B}}.
$$
\end{lm}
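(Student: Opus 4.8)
The plan is to reduce the statement to a question about the location of critical points, and then combine a genericity (Morse) argument with an explicit diffeomorphism that sweeps all critical points into $\mathcal{B}$. Throughout, recall that $G$ is a domain, hence connected, and that $\Gamma$ is smooth; fix once and for all a nonempty open set $\omega_0$ with $\overline{\omega_0}\subset\mathcal{B}$.

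\textbf{Step 1 (a first candidate with the right boundary behaviour).} First I would construct some $\psi\in C^4(\overline G)$ with $\psi>0$ in $G$, $\psi=0$ and $\partial_\nu\psi<0$ on $\Gamma$. This is standard: since $\Gamma$ is smooth, the distance function $d(x)=\mathrm{dist}(x,\Gamma)$ is $C^4$ in a one-sided neighbourhood $V$ of $\Gamma$ in $\overline G$ with $|\nabla d|\equiv 1$ there, and one glues $d$ near $\Gamma$ to a suitable positive constant in the interior using a cut-off, adjusting constants so that positivity in $G$ is not lost. In particular $|\nabla\psi|>0$ on a neighbourhood $\mathcal N$ of $\Gamma$ in $\overline G$, so all critical points of $\psi$ already lie in the compact set $\overline G\setminus\mathcal N\subset G$.

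\textbf{Step 2 (making it Morse).} Next I would perturb $\psi$ into a function $\widetilde\psi\in C^4(\overline G)$ with the same boundary data by writing $\widetilde\psi=\psi+\varepsilon\varphi$, where $\varphi\in C^4_c(G)$ is supported in $\overline G\setminus\mathcal N$ and $\varepsilon>0$ is small. By Sard's theorem / the density of Morse functions, a generic such $\varphi$ makes $\widetilde\psi$ a Morse function on $G$; since the perturbation is small in $C^1$ and vanishes near $\Gamma$, we still have $\widetilde\psi>0$ in $G$, $\widetilde\psi=0$, $\partial_\nu\widetilde\psi<0$ on $\Gamma$, and $|\nabla\widetilde\psi|>0$ on $\mathcal N$. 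Being Morse on the compact manifold-with-boundary $\overline G$ with no boundary critical points, $\widetilde\psi$ has only finitely many critical points $x_1,\dots,x_K$, all in $\overline G\setminus\mathcal N$.

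\textbf{Step 3 (sweeping the critical points into $\mathcal{B}$) and conclusion.} Using connectedness of $G$, choose pairwise disjoint $C^4$ embedded arcs $\gamma_1,\dots,\gamma_K$ in $G$, avoiding a neighbourhood of $\Gamma$, with $\gamma_k(0)=x_k$ and $\gamma_k(1)\in\omega_0$; thickening them to disjoint tubular neighbourhoods and flowing along compactly supported vector fields gives a $C^4$-diffeomorphism $\Phi:\overline G\to\overline G$, isotopic to the identity, equal to the identity near $\Gamma$, with $\Phi(x_k)\in\omega_0\subset\mathcal B$ for all $k$. Set $\eta_0:=\widetilde\psi\circ\Phi^{-1}\in C^4(\overline G)$. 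Then $\eta_0>0$ in $G$ and $\eta_0=0$ on $\Gamma$ since $\Phi$ fixes $G$ and $\Gamma$ setwise and preserves positivity; since $\Phi=\mathrm{id}$ near $\Gamma$ we get $\partial_\nu\eta_0=\partial_\nu\widetilde\psi<0$ on $\Gamma$, hence $|\nabla\eta_0|>0$ near $\Gamma$; and the critical set of $\eta_0$ is $\{\Phi(x_1),\dots,\Phi(x_K)\}\subset\omega_0$, which is disjoint from $\overline{G\setminus\mathcal B}\subset\overline G\setminus\mathcal B$, so $|\nabla\eta_0|>0$ on $\overline{G\setminus\mathcal B}$, as required.

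\textbf{Main obstacle.} The routine parts are the boundary construction in Step 1 and the final bookkeeping in Step 3. The two delicate points are: (i) producing a Morse function while rigidly preserving the prescribed boundary data and positivity (handled by perturbing only inside a compact subset of $G$ away from $\Gamma$), and (ii) building the diffeomorphism $\Phi$ that pushes \emph{all} critical points into $\mathcal B$ while remaining the identity near $\Gamma$ — this is where connectedness of $G$ is essential and where one must be careful that the supporting tubes stay inside $G$ and pairwise disjoint.
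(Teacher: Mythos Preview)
Your argument is correct and is essentially the classical Morse-theoretic construction due to Fursikov and Imanuvilov. Note, however, that the paper does not supply its own proof of this lemma: it merely states the result and cites \cite{BFurIman}, so there is no in-paper argument to compare against beyond that reference. Your Steps~1--3 (distance-type candidate, Morse perturbation away from the boundary, diffeomorphism pushing the finitely many critical points into $\mathcal{B}$) reproduce precisely that standard proof, and your identification of the two delicate points is accurate.
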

For some parameters \( \lambda, \mu \geq 1 \) (large enough), we define the following functions:
\begin{align}\label{weightfustfic}
    \begin{aligned}
&\alpha \equiv \alpha(t,x) = \frac{e^{\mu \eta_0(x)} - e^{2\mu |\eta_0|_\infty}}{t(T - t)}, \qquad \gamma \equiv \gamma(t) = \frac{1}{t(T - t)}, \qquad\theta \equiv \theta(t,x) = e^{\lambda \alpha},\\
&\hspace{2.5cm}\rho_*\equiv\rho_*(t)=e^{-\frac{\lambda \alpha^*(t)}{2}},\qquad \alpha^*(t) = \min_{x \in \overline{G}} \alpha(t, x).
    \end{aligned}
\end{align}
It is easy to see that there exists a constant \( C = C(G,T) > 0 \) such that for all $(t,x)\in Q$,
\begin{align}\label{aligned123}
\begin{aligned}
&\gamma \geq C, \qquad |\gamma'| \leq C \gamma^2, \qquad |\gamma''| \leq C \gamma^3, \\
&|\alpha_t| \leq C e^{2\mu |\eta_0|_\infty} \gamma^2, \qquad |\alpha_{tt}| \leq C e^{2\mu |\eta_0|_\infty} \gamma^3.
\end{aligned}
\end{align}
For \( \mathcal{B} \Subset G \) a nonempty open subset, \( d \in \mathbb{R} \), and a process \( z \), we denote by
\[
I(d, z) = \lambda^d \mathbb{E} \iint_Q \theta^2 \gamma^d z^2 \,dx\,dt + \lambda^{d-2} \mathbb{E} \iint_Q \theta^2 \gamma^{d-2} |\nabla z|^2 \,dx\,dt,
\]
and
\[
\mathcal{L}_{\mathcal{B}}(d, z) = \lambda^d \mathbb{E} \int_{0}^T \int_{\mathcal{B}} \theta^2 \gamma^d z^2 \,dx\,dt.
\]
Let us first introduce the following forward stochastic parabolic equation
\begin{equation}\label{eqqgfr}
\begin{cases}
\begin{array}{ll}
dz - \Delta z \,dt = F_0 \,dt + F_1\,dW(t) & \text{in} \, Q, \\
z = 0 & \text{on} \, \Sigma, \\
z(0) = z_0 & \text{in} \, G,
\end{array}
\end{cases}
\end{equation}
where \( z_0 \in L^2_{\mathcal{F}_0}(\Omega; L^2(G)) \) and \( F_0, F_1 \in L^2_{\mathcal{F}}(0, T; L^2(G)) \). From \cite[Lemma 2.2]{Preprielgr24}, we recall the following Carleman estimate.

\begin{lm}\label{lm1.1}
Let \( \mathcal{B} \Subset G \) be a nonempty open subset and \( d \in \mathbb{R} \). There exist a large \( \mu_0 \geq 1 \) such that for \( \mu = \mu_0 \), one can find a positive constant \( C \) and a large \( \lambda_0 \) depending only on \( G \), \( \mathcal{B} \), \( \mu_0 \), \( T \), and \( d \), such that for all \( \lambda \geq \lambda_0 \), \( F_0, F_1 \in L^2_{\mathcal{F}}(0, T; L^2(G)) \), and \( z_0 \in L^2_{\mathcal{F}_0}(\Omega; L^2(G)) \), the associated solution \( z \) of the equation \eqref{eqqgfr} satisfies that 
\begin{align}\label{carfor5.6}
\begin{aligned}
I(d,z) \leq C \left[ \mathcal{L}_{\mathcal{B}}(d,z) + \lambda^{d-3} \mathbb{E} \iint_Q \theta^2 \gamma^{d-3} F_0^2 \,dx\,dt + \lambda^{d-1} \mathbb{E} \iint_Q \theta^2 \gamma^{d-1} F_1^2 \,dx\,dt \right].
\end{aligned}
\end{align}
\end{lm}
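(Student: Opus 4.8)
Since Lemma~\ref{lm1.1} is recalled verbatim from \cite[Lemma~2.2]{Preprielgr24}, the most economical route is simply to invoke that reference; what follows is the plan I would carry out for a self-contained argument, namely the stochastic incarnation of the global Carleman method of Fursikov--Imanuvilov (cf.\ \cite{tang2009null} and \cite[Chapter~9]{lu2021mathematical}). The overall strategy is to conjugate the parabolic operator by the exponential weight $\theta$, produce a weighted energy identity by cross-multiplying the self-adjoint and skew-adjoint parts of the conjugated operator, integrate it over $Q$ using It\^o's formula in time and integration by parts in space, and then absorb every lower-order contribution by first fixing $\mu=\mu_0$ large and subsequently taking $\lambda$ large; the exponent $d$ is carried symbolically throughout, the extra terms produced by $\gamma',\gamma'',\alpha_t,\alpha_{tt}$ being controlled by the bounds \eqref{aligned123}.

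\textbf{Conjugation.} First I would set $\ell=\lambda\alpha$, $\theta=e^{\ell}$ and $v=\theta z$. Since $\theta$ is deterministic and $\theta(t,\cdot)\to 0$ as $t\to 0^+$ and $t\to T^-$, It\^o's formula applied to $v=\theta z$ gives, in $Q$,
\[
dv-\Delta v\,dt=\bigl[\ell_t\,v-2\nabla\ell\cdot\nabla v+(|\nabla\ell|^2-\Delta\ell)\,v+\theta F_0\bigr]dt+\theta F_1\,dW(t),
\]
with $v=0$ on $\Sigma$ and $v(0)=v(T)=0$, the latter regardless of the initial datum $z_0$ because $\theta$ vanishes at the endpoints. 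I would then split this identity into a self-adjoint part $\mathcal{A}v:=\Delta v+(|\nabla\ell|^2-\Delta\ell+\ell_t)v$ and a skew part, grouping $dv$ together with $-2\nabla\ell\cdot\nabla v\,dt$, so that (up to zeroth-order regrouping) $\mathcal{A}v\,dt+\bigl(dv+2\nabla\ell\cdot\nabla v\,dt\bigr)=\theta F_0\,dt+\theta F_1\,dW(t)$.

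\textbf{The weighted identity.} Next I would compute $\mathbb{E}\iint_Q \mathcal{A}v\,\bigl(dv+2\nabla\ell\cdot\nabla v\,dt\bigr)$ in two ways: expanding the product, integrating by parts in $x$ (using $v=0$ on $\Sigma$), and applying It\^o's formula to the terms involving $dv$ as well as to $d(v^2)$ and $d(|\nabla v|^2)$. Using the property $|\nabla\eta_0|>0$ on $\overline{G\setminus\mathcal{B}}$ from Lemma~\ref{lmm5.1} together with \eqref{aligned123}, this produces the dominant positive quantities $\lambda^{3}\mu^{4}\,\mathbb{E}\iint_Q\theta^2\gamma^{3}v^2$ and $\lambda\mu^{2}\,\mathbb{E}\iint_Q\theta^2\gamma\,|\nabla v|^2$, plus: a boundary integral over $\Sigma$ proportional to $-\lambda\mu\int_\Sigma\theta^2(\partial_\nu\eta_0)|\partial_\nu v|^2$, which is nonnegative since $\partial_\nu\eta_0\le 0$ on $\Gamma$; a term supported where $\nabla\eta_0$ may vanish, i.e.\ near $\mathcal{B}$, which I would bound by $C\,\mathcal{L}_{\mathcal{B}}(d,z)$ after undoing the substitution; and the right-hand side terms arising from $\theta F_0$ and from the quadratic variation $d\langle v\rangle=\theta^2F_1^2\,dt$. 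Reverting to $z=\theta^{-1}v$ turns the first two quantities into $I(d,z)$, while Young's inequality applied to the cross terms with $\theta F_0$ and $\theta F_1$ yields precisely the weights $\lambda^{d-3}\gamma^{d-3}$ on $F_0^2$ and $\lambda^{d-1}\gamma^{d-1}$ on $F_1^2$. Finally I would choose $\mu=\mu_0$ large enough to dominate the constants depending on $\eta_0$ and its derivatives, and then $\lambda\ge\lambda_0$ large enough to absorb all remaining lower-order terms into the left-hand side.

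\textbf{Main obstacle.} The delicate point is the treatment of the stochastic corrections: the It\^o bracket of $v$ forces the $F_1^2$-term to carry the weight $\gamma^{d-1}$ rather than $\gamma^{d-3}$ (the characteristic ``$\gamma^{2}$ loss'' of the stochastic setting relative to the deterministic Carleman estimate), and several cross terms between $dW$-integrals and the deterministic multipliers must be shown to have vanishing expectation or to be absorbable. Keeping an exact account of all powers of $\lambda$, $\mu$ and $\gamma$ so that every lower-order term is genuinely dominated, while simultaneously isolating the uncontrolled region near $\mathcal{B}$ into $\mathcal{L}_{\mathcal{B}}(d,z)$, is where the real work lies; the adaptation to a general exponent $d$ is then routine via \eqref{aligned123}.
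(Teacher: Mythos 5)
The paper itself gives no proof of Lemma \ref{lm1.1}: it is simply quoted from \cite[Lemma 2.2]{Preprielgr24}, which is exactly your primary route, so your proposal matches the paper's treatment. Your supplementary sketch (conjugation by $\theta$, splitting into symmetric and antisymmetric parts, the weighted identity via It\^o's formula, sign of the boundary term from $\partial_\nu\eta_0\leq 0$, absorption by taking $\mu$ then $\lambda$ large, and the characteristic $\gamma^{d-1}$ weight on $F_1^2$ coming from the quadratic variation) is the standard stochastic Fursikov--Imanuvilov argument used in that reference and in \cite{tang2009null}, \cite[Chapter 9]{lu2021mathematical}, so nothing further is required.
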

We also consider the following backward stochastic parabolic equation
\begin{equation}\label{eqqgbc}
\begin{cases}
dz + \Delta z\,dt = F\,dt + Z \,dW(t) & \text{in}\, Q, \\ 
z = 0 & \text{on}\, \Sigma, \\
z(T) = z_T & \text{in}\, G,
\end{cases}
\end{equation}
where \( z_T \in L^2_{\mathcal{F}_T}(\Omega; L^2(G)) \) and \( F \in L^2_{\mathcal{F}}(0,T; L^2(G)) \). We have the following Carleman estimate for solutions of \eqref{eqqgbc} (see \cite[Lemma 3.3]{Aelgrou24CSPS}).

\begin{lm}\label{lm1.13.22}
Let \( \mathcal{B} \Subset G \) be a nonempty open subset and \( d \in \mathbb{R} \). There exists a large \( \mu_1 \geq 1 \) such that for \( \mu = \mu_1 \), one can find a positive constant \( C \) and a large \( \lambda_1 \geq 1 \) depending only on \( G \), \( \mathcal{B} \), \( \mu_1 \), \( T \), and \( d \), such that for all \( \lambda \geq \lambda_1 \), \( F \in L^2_{\mathcal{F}}(0, T; L^2(G)) \) and \( z_T \in L^2_{\mathcal{F}_T}(\Omega; L^2(G)) \), the associated weak solution \( (z, Z) \) of the equation \eqref{eqqgbc} satisfies 
\begin{align}\label{3.22carlemgenBack}
\begin{aligned}
I(d,z) \leq C\left[\mathcal{L}_{\mathcal{B}}(d,z) + \lambda^{d-3} \mathbb{E} \iint_Q \theta^2 \gamma^{d-3} F^2 \,dx\,dt + \lambda^{d-1} \mathbb{E} \iint_Q \theta^2 \gamma^{d-1} Z^2 \,dx\,dt\right].
\end{aligned}
\end{align}
\end{lm}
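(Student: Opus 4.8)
The inequality \eqref{3.22carlemgenBack} is a global Carleman estimate for the backward stochastic parabolic operator $dz+\Delta z\,dt$, and the plan is to follow the by now classical duality-free scheme for backward stochastic heat equations (see \cite{tang2009null} and \cite[Chapter 9]{lu2021mathematical}), paying attention to the singular weights of \eqref{weightfustfic}. Fix $\mathcal B\Subset\mathcal B'\Subset G$, put $\ell=\lambda\alpha$, $\theta=e^{\ell}$, and introduce $v=\theta z$. A direct computation from \eqref{eqqgbc} gives that $v$ solves a backward It\^o equation of the schematic form
\[ dv+\Delta v\,dt=\big[\,2\lambda\nabla\alpha\cdot\nabla v+c_{\lambda,\mu}\,v+\theta F\,\big]\,dt+\theta Z\,dW , \]
where $c_{\lambda,\mu}$ collects the zeroth-order coefficients built from $|\nabla\alpha|^{2}$, $\Delta\alpha$ and $\alpha_t$. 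The dependence on the parameter $d$ only amounts to dragging an extra polynomial weight $\gamma^{(d-3)/2}$ through the whole computation (equivalently, working with $\theta\gamma^{(d-3)/2}z$); by \eqref{aligned123} its $t$-derivatives generate only lower-order terms, so I describe the argument for $d=3$.

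Next I multiply the $v$-equation by a suitable multiplier --- essentially $-2\Delta v$ together with the first-order pieces built from $\nabla\ell$ and $\ell_t$ --- expand the products, and use It\^o's formula to turn the time-derivative terms into exact differentials, reaching a pointwise weighted identity of the form
\[ (\text{positive quadratic form in }v,\nabla v)\ \le\ \operatorname{div}(V)\,dt+\partial_t(\cdots)\,dt+d(\cdots)+(\cdots)\,dW+\mathcal R\,dt , \]
where $\mathcal R$ gathers the It\^o corrections. Integrating over $Q$ and taking expectations, the $dW$-integral has zero mean, the exact differential leaves only the values at $t=0$ and $t=T$, and these vanish because $\theta$ and its $t$-derivatives tend to $0$ as $t\to0^{+}$ and $t\to T^{-}$ (this is the role of the denominator $t(T-t)$, and it is why the arbitrary terminal datum $z_T$ does no harm). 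The divergence term contributes a boundary integral over $\Sigma$; since $v|_{\Sigma}=0$ one has $\nabla v=(\partial_{\nu}v)\nu$ there, and the integrand carries the factor $\partial_{\nu}\eta_{0}$, which has the fixed sign $\partial_{\nu}\eta_{0}\le0$ on $\Gamma$ (because $\eta_{0}>0$ in $G$ and $\eta_{0}=0$ on $\Gamma$) --- exactly the sign needed to discard the boundary term, as in the deterministic case. Fixing first $\mu=\mu_{1}$ large (so that, via Lemma \ref{lmm5.1}, the coefficient of the leading gradient term --- governed by $|\nabla\eta_{0}|^{2}e^{\mu\eta_{0}}$ --- is bounded below outside $\mathcal B$) and then $\lambda\ge\lambda_{1}$ large, every remaining lower-order term, including all of $\mathcal R$ (each summand carrying either a strictly lower power of $\lambda\gamma$, hence absorbable, or splittable by Cauchy--Schwarz against a $\theta^{2}\gamma Z^{2}$ reservoir), is absorbed into the left-hand side; this leaves
\[ I(3,v)\ \le\ C\Big[\,\mathcal L_{\mathcal B}(3,v)+\lambda\,\mathbb E\!\iint_{Q}\theta^{2}\gamma|\nabla v|^{2}\chi_{\mathcal B}+\mathbb E\!\iint_{Q}\theta^{2}F^{2}+\lambda\,\mathbb E\!\iint_{Q}\theta^{2}\gamma Z^{2}\,\Big]. \]

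It remains to dispose of the local gradient term on $\mathcal B$ (where $|\nabla\eta_{0}|$ may vanish) in favour of a local $L^{2}$-term in $z$. Taking a cutoff $\zeta\in C_{c}^{\infty}(\mathcal B')$, $0\le\zeta\le1$, $\zeta\equiv1$ on $\mathcal B$, one writes $\mathbb E\iint\zeta^{2}\theta^{2}\gamma|\nabla z|^{2}=-\mathbb E\iint z\,\operatorname{div}(\zeta^{2}\theta^{2}\gamma\nabla z)$, substitutes $\Delta z\,dt$ from \eqref{eqqgbc}, and applies It\^o's formula to $\zeta^{2}\theta^{2}\gamma z^{2}$ to handle the term $\iint z\,\zeta^{2}\theta^{2}\gamma\,dz$ (which produces a harmless $-\tfrac12\iint\zeta^{2}\theta^{2}\gamma Z^{2}$ and, after integration by parts in $t$, only $\lambda\theta^{2}\gamma^{3}z^{2}$-type terms on $\mathcal B'$). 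This yields, for any $\varepsilon>0$,
\[ \lambda\,\mathbb E\!\iint_{Q}\theta^{2}\gamma|\nabla z|^{2}\chi_{\mathcal B}\ \le\ \varepsilon\,I(3,z)+C_{\varepsilon}\Big[\,\mathcal L_{\mathcal B'}(3,z)+\mathbb E\!\iint_{Q}\theta^{2}F^{2}+\lambda\,\mathbb E\!\iint_{Q}\theta^{2}\gamma Z^{2}\,\Big]. \]
Undoing the substitution through $\nabla v=\theta\nabla z+\lambda\theta\nabla\alpha\,z$ (so that $\theta^{2}|\nabla z|^{2}$ and $|\nabla v|^{2}$ differ only by quantities already contained in $I(3,\cdot)$), choosing $\varepsilon$ small, and running the whole argument with $\mathcal B'$ in place of $\mathcal B$ from the outset so that the local term lands on the prescribed set, one obtains \eqref{3.22carlemgenBack} for $d=3$, and hence --- by the polynomial-weight adjustment mentioned above --- for all $d\in\mathbb R$.

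The main obstacle, in contrast with the deterministic parabolic Carleman estimate, is the control of the It\^o corrections $\mathcal R$: applying It\^o's formula to the quadratic quantities in $v$ produces extra terms involving $Z$, $\theta_t$ and $\theta_{tt}$, and one must check that each is either of strictly lower order in $\lambda$ and $\gamma$, hence absorbable once $\mu$ and $\lambda$ are large, or pairs through Cauchy--Schwarz with $\theta^{2}\gamma Z^{2}$. This is precisely what dictates the term $\lambda^{d-1}\theta^{2}\gamma^{d-1}Z^{2}$ on the right-hand side --- two powers of $\gamma$ more than the drift source $\theta^{2}\gamma^{d-3}F^{2}$ --- exactly as the diffusion source $F_{1}$ enters the forward estimate of Lemma \ref{lm1.1}; since $Z$ is part of the unknown in \eqref{eqqgbc}, this term cannot be removed and is handled at the level of the applications. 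A further delicate point is that, the equation being backward with an arbitrary terminal datum, no terminal-energy term is available, so the decay of $\theta$ at $t=T$ must be invoked at exactly the right step.
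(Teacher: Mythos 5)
The paper does not actually prove Lemma \ref{lm1.13.22}: it quotes the estimate from \cite[Lemma 3.3]{Aelgrou24CSPS}, whose proof is the classical weighted-identity argument of \cite{tang2009null} and \cite[Chapter 9]{lu2021mathematical}, and your sketch follows exactly that scheme (the transformation $v=\theta z$, the pointwise It\^o identity with the second-order multiplier, dropping the $\Sigma$-boundary term via $\partial_\nu\eta_0\le 0$ and the $t=0,T$ terms via the vanishing of $\theta$, fixing $\mu$ then $\lambda$, the Caccioppoli step to convert the local gradient term, and the polynomial-weight shift to general $d$), so it is essentially the same approach the paper relies on. The only blemish is internal and harmless: your intermediate display records the diffusion contribution as $\lambda\,\mathbb{E}\iint_Q\theta^2\gamma Z^2$, whereas the It\^o correction in the weighted identity genuinely produces a term of size $\lambda^{2}\gamma^{2}\theta^{2}Z^{2}$ at $d=3$; since \eqref{3.22carlemgenBack} allows precisely $\lambda^{d-1}\gamma^{d-1}$ --- as you yourself note at the end --- this does not affect the stated conclusion.
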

In what follows, we set \( \mu = \overline{\mu} = \max(\mu_0, \mu_1) \), where \( \mu_0 \) (resp. \( \mu_1 \)) is the constant given in Lemma \ref{lm1.1} (resp. Lemma \ref{lm1.13.22}). Moreover, to derive improved Carleman estimates for the systems \eqref{ADJSO1} and \eqref{ADJSO1sec}, we will introduce the following modified weight functions:
\begin{align}\label{rec1}
\overline{\alpha} &= \overline{\alpha}(t,x) = \frac{e^{\overline{\mu}\eta_0(x)} - e^{2\overline{\mu}\vert\eta_0\vert_\infty}}{\ell(t)}, \qquad
\overline{\gamma} = \overline{\gamma}(t) = \frac{1}{\ell(t)}, \qquad
\overline{\theta} = e^{\lambda\overline{\alpha}},
\end{align}
where
\begin{equation*}
\ell(t) = 
\begin{cases}
    \begin{array}{ll}
        T^2/4 & \textnormal{for }\, 0 \leq t \leq T/2, \\
        t(T-t) & \textnormal{for }\, T/2 \leq t \leq T.
    \end{array}
\end{cases}
\end{equation*}
We also denote by
\begin{align}\label{inteIbar}
\overline{I}_{t_1,t_2}(d,z) = \mathbb{E} \int_{t_1}^{t_2} \int_G \overline{\theta}^2 \overline{\gamma}^d z^2 \,dx\,dt 
+ \mathbb{E} \int_{t_1}^{t_2} \int_G \overline{\theta}^2 \overline{\gamma}^{d-2} \vert\nabla z\vert^2 \,dx\,dt, \quad 0 \leq t_1 \leq t_2 \leq T.
\end{align}
\subsection{Carleman estimates for the systems \eqref{ADJSO1} and \eqref{ADJSO1sec}}
Applying the inequalities \eqref{carfor5.6} and \eqref{3.22carlemgenBack}, we will first prove the following Carleman estimate for the coupled system \eqref{ADJSO1}.

\begin{lm}\label{thmm5.1}
Assume that \eqref{Assump10} and \eqref{assump1.3} hold. There exists a constant \(C > 0\) and a sufficiently large \(\overline{\lambda},\overline{\beta} \geq 1\) (depending only on \(G\), \(G_0\), \(\mathcal{O}_d\), \(\overline{\mu}\), \(\alpha_i\), and \(a_{ij}\)) such that for all \(\lambda \geq \overline{\lambda}\), \(\beta_i \geq \overline{\beta}\), $i=1,2,\dots,m$, and \(\phi_k^T \in L^2_{\mathcal{F}_T}(\Omega; L^2(G))\) (\(k=1,2\)), the corresponding solution of the system \eqref{ADJSO1} satisfies that
\begin{align}\label{ineq5.1}
\begin{aligned}
&I(5,\phi_1) + I(5,\phi_2) + I(3,h_1) + I(3,h_2) \\
&\leq C\bigg[\lambda^{29} \mathbb{E} \iint_{Q_0} \theta^2 \gamma^{29} |\phi_1|^2 \,dx\,dt + \lambda^{15} \mathbb{E} \iint_Q \theta^2 \gamma^{15} |\Phi_1|^2 \,dx\,dt \\
&\hspace{1cm}   + \lambda^{9} \mathbb{E} \iint_Q \theta^2 \gamma^{9} |\Phi_2|^2 \,dx\,dt\bigg],
\end{aligned}
\end{align}
where \( h_k = \sum_{i=1}^m \alpha_i \psi_k^i \), \,\(k = 1,2\).
\end{lm}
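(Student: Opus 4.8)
The plan is to combine the Carleman estimates of Lemma~\ref{lm1.1} and Lemma~\ref{lm1.13.22} applied to the individual equations of \eqref{ADJSO1}, and then to absorb the coupling and source terms by a careful bookkeeping of the powers of $\lambda$ and $\gamma$. First I would introduce $h_k = \sum_{i=1}^m \alpha_i \psi_k^i$, $k=1,2$, which satisfy forward stochastic parabolic equations obtained by summing the third and fourth equations of \eqref{ADJSO1}: indeed $dh_1 - \Delta h_1\,dt = [a_{11}h_1 + a_{12}h_2 + (\sum_i \frac{\alpha_i}{\beta_i}\rho_*^{-2}\phi_1\chi_{G_i})]\,dt$ and $dh_2 - \Delta h_2\,dt = [a_{21}h_1 + a_{22}h_2]\,dt$, both with zero initial data and zero boundary data. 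The backward pair $(\phi_1,\phi_2)$ carries the source terms $\sum_i \alpha_i\psi_k^i\chi_{\mathcal{O}_{i,d}}$, which by \eqref{Assump10} are supported in $(0,T)\times\mathcal{O}_d$ and are exactly $h_k\chi_{\mathcal{O}_d}$.

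The key steps, in order, are: (1) Apply Lemma~\ref{lm1.13.22} to $\phi_2$ with weight exponent $d=5$: the source is $-a_{12}\phi_1 - a_{22}\phi_2 + h_2\chi_{\mathcal{O}_d}$ and the diffusion term is $\Phi_2$. This bounds $I(5,\phi_2)$ by a local term $\mathcal{L}_{\mathcal{O}_d}(5,\phi_2)$ plus $\lambda^2$-lower-order integrals of $\phi_1^2$, $\phi_2^2$, $h_2^2$ over $Q$ and the $\Phi_2$ term. (2) Apply Lemma~\ref{lm1.13.22} to $\phi_1$ with a higher exponent, say $d=9$ or $d=11$, its source being $-a_{11}\phi_1 - a_{21}\phi_2 + h_1\chi_{\mathcal{O}_d}$; here one uses assumption \eqref{assump1.3} on $a_{21}$ in $(0,T)\times\mathcal{O}_0$ to convert the local observation of $\phi_2$ (from step 1) into a local observation of $\phi_1$ plus terms in $\psi_2^i$ — this is the standard trick for coupled systems, where one writes $a_{21}\phi_2 = $ (terms one controls) and uses the equation for $\phi_1$ to express $\phi_2$ on $\mathcal{O}_0$. (3) Apply Lemma~\ref{lm1.1} to $h_1$ and $h_2$ with exponent $d=3$: the source in the $h_1$-equation is $a_{11}h_1 + a_{12}h_2 + \sum_i\frac{\alpha_i}{\beta_i}\rho_*^{-2}\phi_1\chi_{G_i}$, and the crucial point is that $\rho_*^{-2} = e^{\lambda\alpha^*}\le\theta^{-2}$-type decay, which means the term $\frac{1}{\beta_i}\rho_*^{-2}\phi_1$ comes with a weight that \emph{improves} upon $\theta^2\gamma^d$ by a full exponential factor, so it can be absorbed into the left-hand side for $\lambda$ large (and $\beta_i$ large). (4) Add the four estimates with suitable scalar multipliers, choose $\lambda$ large enough so that all the global integrals of $\phi_1^2, \phi_2^2, h_1^2, h_2^2$ on the right are absorbed by the left-hand sides $I(5,\phi_1)+I(5,\phi_2)+I(3,h_1)+I(3,h_2)$ (using $\gamma\ge C$ and that the leftover powers of $\lambda$ on the right are strictly smaller), and finally estimate the local term $\mathcal{L}_{\mathcal{O}_d}(5,\phi_1)$ — this requires an energy/integration-by-parts argument on a slightly larger set, sacrificing powers of $\lambda$, to arrive at the stated exponent $29$ on $\mathbb{E}\iint_{Q_0}\theta^2\gamma^{29}|\phi_1|^2$.

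The main obstacle is step (2) combined with the presence of the nonlocal source $h_1\chi_{\mathcal{O}_d} = \sum_i\alpha_i\psi_1^i\chi_{\mathcal{O}_{i,d}}$ in the $\phi_1$-equation, exactly as flagged in the remark preceding Section~\ref{section3}. Unlike $\psi_2^i$, the term $\psi_1^i$ is coupled back to $\phi_1$ through the $\rho_*^{-2}$-weighted feedback, so one cannot simply treat $h_1\chi_{\mathcal{O}_d}$ as a harmless lower-order perturbation of the backward $\phi_1$-equation; it must be carried as part of $I(3,h_1)$ on the left and then absorbed only after the $h$-Carleman estimate of step (3) is invoked, which is why the exponential gain from $\rho_*^{-2}$ is indispensable. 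Balancing the exponents across the four coupled estimates so that everything closes — in particular getting the local term in $\phi_1$ alone on the right, with a clean power of $\lambda$ and $\gamma$, and ensuring the $\beta_i$-threshold is uniform — is the delicate part; the rest is bookkeeping with \eqref{aligned123} and the elementary inequalities $\gamma\ge C$, $\rho_*^{-2}\theta^2\gamma^d \le C$ on $Q$.
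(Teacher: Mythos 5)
Your overall architecture (forward Carleman for $(h_1,h_2)$, backward Carleman for $(\phi_1,\phi_2)$, use of the sign condition on $a_{21}$ to trade the local $\phi_2$-observation for a local $\phi_1$-observation, and the weight $\rho_*$ together with large $\beta_i$ to tame the feedback term) is the same as in the paper's proof of Lemma~\ref{thmm5.1}, but two of the absorption mechanisms you rely on are either missing or would fail as described. First, both Carleman estimates you invoke come with \emph{local} observation terms $\mathcal{L}_{\mathcal{B}}(3,h_1)+\mathcal{L}_{\mathcal{B}}(3,h_2)$ for the forward system; these cannot appear in \eqref{ineq5.1} and you never say how they are removed. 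The paper eliminates them by an It\^o/duality computation of $\sum_j d(w_3\xi_1 h_j\phi_j)$ on a slightly larger cut-off set (exploiting that the $h_1$-source is $\sum_i\frac{\alpha_i}{\beta_i}\rho_*^{-2}\phi_1\chi_{G_i}$ while the $\phi$-sources are $h_k\chi_{\mathcal{O}_d}$), which converts them into local terms in $\phi_1,\phi_2,\nabla\phi_1,\nabla\phi_2$; the localized gradient terms then require their own local energy identities ($d(w_5\xi_2|\phi_2|^2)$ and $d(w_{15}\xi_4|\phi_1|^2)$), and this cascade of Young inequalities is precisely where the exponent $29$ comes from --- not from ``estimating $\mathcal{L}_{\mathcal{O}_d}(5,\phi_1)$'', which needs no further estimation because all cut-offs are chosen inside $\mathcal{O}_0\subset G_0\cap\mathcal{O}_d$ (you must keep every local term inside $G_0\cap\mathcal{O}_d$, not merely $\mathcal{O}_d$, or the right-hand side of \eqref{ineq5.1} is unreachable).

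Second, your handling of the $h_1$-feedback is off in the weights. The coupling step that uses $a_{21}\ge a_0$ is, in the stochastic setting, It\^o's formula applied to $d(w_9\xi_3\phi_1\phi_2)$ rather than a pointwise ``solve for $\phi_2$ from the $\phi_1$-equation'', and it produces a \emph{localized} $h_1$-term carrying a high power of $\lambda\gamma$ (order $13$ in the paper). Such a term cannot be ``carried as part of $I(3,h_1)$ on the left'', since $\lambda^{13}\theta^2\gamma^{13}$ is not dominated by $\lambda^{3}\theta^2\gamma^{3}$. The paper absorbs it instead into $\tfrac12 I(5,\phi_1)$ via the $L^2$-energy estimate for the system \eqref{systeofh1h2}, the pointwise inequality $\rho_*^{-4}\le\theta^2$, and the factor $\beta_i^{-2}$, i.e.\ by taking $\beta_i\ge\overline{\beta}$ sufficiently large --- this is exactly where $\overline{\beta}$ enters the statement. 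Relatedly, your inequality ``$\rho_*^{-2}=e^{\lambda\alpha^*}\le\theta^{-2}$'' is trivially true and useless ($\theta^{-2}$ blows up); what is actually needed is $\rho_*^{-2}\le\theta$, equivalently $\rho_*^{-4}\le\theta^2$, which follows from $\alpha^*\le\alpha<0$. With these mechanisms supplied your plan coincides with the paper's six-step proof; without them the estimate does not close.
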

\begin{proof} The proof is organized into several steps.\\
\textbf{Step 1.} \textbf{Preliminary notations and estimations.}\\
We first define the following nonempty open subsets \( \mathcal{O}_i \) (\(i=1,2,3,4\)) such that
\[
\mathcal{O}_4 \Subset\mathcal{O}_3 \Subset \mathcal{O}_2 \Subset \mathcal{O}_1 \Subset \mathcal{O}_0 \subset G_0 \cap \mathcal{O}_d,
\]
where \( \mathcal{O}_0 \) is the subset defined in \eqref{assump1.3}. We also consider the functions \( \xi_i \in C^{\infty}(\mathbb{R}^N) \) (for the existence of such functions, see, e.g., \cite{HSP18}) satisfying
\begin{align}\label{assmzeta}
\begin{aligned}
& 0 \leq \xi_i \leq 1, \quad \xi_i = 1 \,\, \text{in} \,\, \mathcal{O}_{5-i}, \quad \text{Supp}(\xi_i) \subset \mathcal{O}_{4-i}, \\ 
& \frac{\Delta \xi_i}{\xi_i^{1/2}} \in L^\infty(G), \quad \frac{\nabla \xi_i}{\xi_i^{1/2}} \in L^\infty(G; \mathbb{R}^N), \quad i=1,2,3,4.
\end{aligned}
\end{align}
Set $w_\nu=\theta^2(\lambda\gamma)^\nu$, it is easy to see that for a large enough $\lambda$ and any $\nu\in\mathbb{N}$, we have
\begin{align}\label{timedrrives}
|\partial_t w_\nu| \leq C \lambda^{\nu+2} \theta^2 \gamma^{\nu+2},\qquad |\nabla(w_\nu\xi_i)| \leq C \lambda^{\nu+1} \theta^2 \gamma^{\nu+1}\xi_i^{1/2}, \quad i=1,2,3,4.
\end{align}

From \eqref{ADJSO1}, noting that \( (h_1, h_2) \) solves the following forward stochastic parabolic system
\begin{equation}\label{systeofh1h2}
\begin{cases}
\begin{aligned}
& dh_1 - \Delta h_1 \, dt = \left[ a_{11} h_1+a_{12} h_2 + \sum_{i=1}^m \frac{\alpha_i}{\beta_i} \rho_*^{-2}\phi_1  \chi_{G_i} \right] \, dt  && \text{in } Q, \\
& dh_2 - \Delta h_2 \, dt = \left[ a_{21} h_1+a_{22} h_2  \right] \, dt  && \text{in } Q, \\
& h_1=h_2 = 0  && \text{on } \Sigma, \\
& h_1(0)=h_2(0) = 0 && \text{in } G.
\end{aligned}
\end{cases}
\end{equation}
Applying the Carleman estimate \eqref{carfor5.6} for solutions of the system \eqref{systeofh1h2} with \( \mathcal{B} = \mathcal{O}_4 \) and \( d = 3 \), we conclude, for a sufficiently large \( \lambda \), 
\begin{align}\label{Car4.13}
\begin{aligned}
I(3, h_1) + I(3, h_2) \leq C \bigg( \mathcal{L}_{\mathcal{O}_4}(3, h_1) + \mathcal{L}_{\mathcal{O}_4}(3, h_2) + \mathbb{E} \iint_Q \theta^2 |\phi_1|^2 \,dx\,dt \bigg).
\end{aligned}
\end{align}
Using the Carleman estimate \eqref{3.22carlemgenBack} to the equations satisfied by the state \( (\phi_1, \phi_2) \) in \eqref{ADJSO1}, with \( \mathcal{B} = \mathcal{O}_4 \) and \( d = 5 \). Then by choosing a sufficiently large \( \lambda \), we find that
\begin{align}\label{car4.14}
\begin{aligned}
&I(5, \phi_1) + I(5, \phi_2) \\
&\leq C \bigg( \mathcal{L}_{\mathcal{O}_4}(5, \phi_1) + \mathcal{L}_{\mathcal{O}_4}(5, \phi_2) + \lambda^{2} \mathbb{E} \iint_Q \theta^2 \gamma^{2} |h_1|^2 \,dx\,dt \\
&\hspace{1cm} + \lambda^{2} \mathbb{E} \iint_Q \theta^2 \gamma^{2} |h_2|^2 \,dx\,dt + \lambda^{4} \mathbb{E} \iint_Q \theta^2 \gamma^{4} |\Phi_1|^2 \,dx\,dt \\
&\hspace{1cm} + \lambda^{4} \mathbb{E} \iint_Q \theta^2 \gamma^{4} |\Phi_2|^2 \,dx\,dt \bigg).
\end{aligned}
\end{align}
Adding \eqref{Car4.13} and \eqref{car4.14} and choosing a large enough $\lambda$, we obtain that
\begin{align}\label{firsine1}
\begin{aligned}
&I(3, h_1) + I(3, h_2) + I(5, \phi_1) + I(5, \phi_2) \\
&\leq C \bigg( \mathcal{L}_{\mathcal{O}_4}(3, h_1) + \mathcal{L}_{\mathcal{O}_4}(3, h_2) + \mathcal{L}_{\mathcal{O}_4}(5, \phi_1) + \mathcal{L}_{\mathcal{O}_4}(5, \phi_2) \\
&\hspace{1cm} + \lambda^{4} \mathbb{E} \iint_Q \theta^2 \gamma^{4} |\Phi_1|^2 \,dx\,dt + \lambda^{4} \mathbb{E} \iint_Q \theta^2 \gamma^{4} |\Phi_2|^2 \,dx\,dt \bigg).
\end{aligned}
\end{align}
\textbf{Step 2.} \textbf{Absorbing the localized terms on \( h_1 \) and \( h_2 \).}\\
We first note that
\begin{align}\label{ineqforp1h1p2h2}
\mathcal{L}_{\mathcal{O}_4}(3, h_1) + \mathcal{L}_{\mathcal{O}_4}(3, h_2) \leq \mathbb{E} \iint_Q w_3 \xi_1 |h_1|^2 \,dx\,dt + \mathbb{E} \iint_Q w_3 \xi_1 |h_2|^2 \,dx\,dt.
\end{align}
Using Itô's formula for \( \sum_{j=1}^2 d(w_3 \xi_1 h_j \phi_j) \), we have that
\begin{align}\label{ineqq1}
\begin{aligned}
&\mathbb{E} \iint_Q w_3 \xi_1 |h_1|^2 \,dx\,dt + \mathbb{E} \iint_Q w_3 \xi_1 |h_2|^2 \,dx\,dt \\
&=-\mathbb{E}\iint_Q \partial_t w_3\,\xi_1 h_1\phi_1 dx \,dt-\mathbb{E}\iint_Q \partial_t w_3\,\xi_1 h_2\phi_2 dx \,dt
\\
& \quad - \mathbb{E} \iint_Q h_1\nabla\phi_1\cdot\nabla(w_3\xi_1)\,dx\,dt +\mathbb{E} \iint_Q \phi_1\nabla h_1\cdot\nabla(w_3\xi_1)\,dx\,dt
\\
& \quad  -\sum_{i=1}^m\frac{\alpha_i}{\beta_i} \mathbb{E} \iint_Q w_3\xi_1\rho_*^{-2}|\phi_1|^2 \chi_{G_i}\,dx\,dt - \mathbb{E} \iint_Q h_2\nabla\phi_2\cdot\nabla(w_3\xi_1)\,dx\,dt
\\
& \quad  +\mathbb{E} \iint_Q \phi_2\nabla h_2\cdot\nabla(w_3\xi_1)\,dx\,dt
\\
&= \sum_{1\leq i\leq7} J_i.
\end{aligned}
\end{align}
Let \( \varepsilon > 0 \). Using \eqref{timedrrives}, we observe that
\begin{align*}
    J_1\leq C \lambda^{5} \mathbb{E} \iint_Q \theta^2 \gamma^{5}\xi_1 |h_1||\phi_1|\,dx\,dt.
\end{align*}
Applying Young's inequality, it follows that
\begin{align}\label{ineqqforj1711}
    J_1\leq \varepsilon I(3, h_1)+\frac{C}{\varepsilon} \lambda^{7} \mathbb{E} \iint_Q \theta^2 \gamma^{7}\xi_1 |\phi_1|^2 \,dx\,dt.
\end{align}
Similarly, we get
\begin{align}\label{ineqqforj1712}
    J_2\leq \varepsilon I(3, h_2)+\frac{C}{\varepsilon} \lambda^{7} \mathbb{E} \iint_Q \theta^2 \gamma^{7}\xi_1 |\phi_2|^2 \,dx\,dt.
\end{align}
We also have that
\begin{align*}
    J_3\leq C \lambda^{4} \mathbb{E} \iint_Q \theta^2 \gamma^{4}\xi_1^{1/2} |h_1||\nabla\phi_1| \,dx\,dt,
\end{align*}
which leads to
\begin{align}\label{ineqqforj17123}
    J_3\leq \varepsilon I(3, h_1)+\frac{C}{\varepsilon} \lambda^{5} \mathbb{E} \iint_Q \theta^2 \gamma^{5}\xi_1 |\nabla\phi_1|^2 \,dx\,dt.
\end{align}
In a similar manner, we find that
\begin{align}\label{ineqqforj17124}
    J_4\leq \varepsilon I(3, h_1)+\frac{C}{\varepsilon} \lambda^{7} \mathbb{E} \iint_Q \theta^2 \gamma^{7}\xi_1 |\phi_1|^2 \,dx\,dt.
\end{align}
Additionally, using similar techniques as above, we obtain
\begin{align}\label{ineqqforj17134}
    J_5\leq C \lambda^{3} \mathbb{E} \iint_Q \theta^2 \gamma^{3}\xi_1 |\phi_1|^2 \,dx\,dt,
\end{align}

\begin{align}\label{ineqqforj17126}
    J_6 \leq \varepsilon I(3, h_2)+\frac{C}{\varepsilon} \lambda^{5} \mathbb{E} \iint_Q \theta^2 \gamma^{5}\xi_1 |\nabla\phi_2|^2 \,dx\,dt,
\end{align}
and
\begin{align}\label{ineqqforj17127}
    J_7\leq \varepsilon I(3, h_2)+\frac{C}{\varepsilon} \lambda^{7} \mathbb{E} \iint_Q \theta^2 \gamma^{7}\xi_1 |\phi_2|^2 \,dx\,dt.
\end{align}
Combining \eqref{firsine1}, \eqref{ineqforp1h1p2h2}, \eqref{ineqq1}, \eqref{ineqqforj1711}, \eqref{ineqqforj1712}, \eqref{ineqqforj17123}, \eqref{ineqqforj17124},
\eqref{ineqqforj17134}, \eqref{ineqqforj17126}, \eqref{ineqqforj17127} and taking a small enough \( \varepsilon \) and a large \( \lambda \), we conclude that
\begin{align}\label{estima3.177sec1}
\begin{aligned}
&I(3, h_1) + I(3, h_2) + I(5, \phi_1) + I(5, \phi_2) \\
&\leq C \bigg( \mathcal{L}_{\mathcal{O}_3}(7,\phi_1) + \mathcal{L}_{\mathcal{O}_3}(7,\phi_2)+\lambda^{5} \mathbb{E} \int_0^T \int_{\mathcal{O}_3} \theta^2 \gamma^{5} |\nabla\phi_1|^2 \,dx\,dt\\
&\hspace{1cm}+\lambda^{5} \mathbb{E} \int_0^T \int_{\mathcal{O}_3} \theta^2 \gamma^{5} |\nabla\phi_2|^2 \,dx\,dt + \lambda^{4} \mathbb{E} \iint_Q \theta^2 \gamma^{4} |\Phi_1|^2 \,dx\,dt\\
&\hspace{1cm} + \lambda^{4} \mathbb{E} \iint_Q \theta^2 \gamma^{4} |\Phi_2|^2 \,dx\,dt \bigg).
\end{aligned}
\end{align}
\textbf{Step 3.} \textbf{Absorbing the localized term on \( \nabla\phi_2 \).}\\
Notice that
\begin{align}\label{ineqq3.22ste4step3}
\lambda^{5} \mathbb{E} \int_0^T \int_{\mathcal{O}_3} \theta^2 \gamma^{5} |\nabla \phi_2|^2 \,dx\,dt \leq \mathbb{E} \iint_Q w_5 \xi_2 |\nabla \phi_2|^2 \,dx\,dt.
\end{align}
Next, we compute \( d(w_5 \xi_2 |\phi_2|^2) \). By applying integration by parts, we arrive at
\begin{align}\label{equalisecgradphi1step3}
\begin{aligned}
2 \mathbb{E} \iint_Q w_5 \xi_2 |\nabla \phi_2|^2 \,dx\,dt &= - \mathbb{E} \iint_Q \partial_t w_5 \, \xi_2 |\phi_2|^2 \,dx\,dt - 2 \mathbb{E} \iint_Q \phi_2 \nabla \phi_2\cdot\nabla(w_5 \xi_2)  \,dx\,dt \\
&\quad + 2 \mathbb{E} \iint_Q w_5 \xi_2 a_{12} \phi_1\phi_2 \,dx\,dt+ 2 \mathbb{E} \iint_Q w_5 \xi_2 a_{22} |\phi_2|^2 \,dx\,dt   \\
&\quad - 2 \mathbb{E} \iint_Q w_5 \xi_2 \phi_2 h_2 \chi_{\mathcal{O}_d} \,dx\,dt- \mathbb{E} \iint_Q w_5 \xi_2 |\Phi_2|^2 \,dx\,dt \\
&= \sum_{8 \leq i \leq 13} J_i.
\end{aligned}
\end{align}
Let \( \varepsilon > 0 \). By applying \eqref{timedrrives} and Young's inequality, it is straightforward to derive that
\begin{align}\label{ineeqK1st310}
J_{8} \leq C \lambda^{7} \mathbb{E} \iint_Q \theta^2  \gamma^{7} \xi_2 |\phi_2|^2 \,dx\,dt,
\end{align}

\begin{align}\label{ineeqK1st311}
J_{9} \leq \varepsilon I(5,\phi_2)+\frac{C}{\varepsilon}\lambda^{9} \mathbb{E} \iint_Q \theta^2  \gamma^{9} \xi_2 |\phi_2|^2 \,dx\,dt,
\end{align}

\begin{align}\label{ineeqK1st311thir}
J_{10} \leq \varepsilon I(5,\phi_2)+\frac{C}{\varepsilon}\lambda^{5} \mathbb{E} \iint_Q \theta^2  \gamma^{5} \xi_2 |\phi_1|^2 \,dx\,dt,
\end{align}

\begin{align}\label{ineeqK1st312}
J_{11} \leq C \lambda^{5} \mathbb{E} \iint_Q \theta^2  \gamma^{5} \xi_2 |\phi_2|^2 \,dx\,dt,
\end{align}

\begin{align}\label{ineeqK1st313}
J_{12} \leq \varepsilon I(3,h_2)+\frac{C}{\varepsilon} \lambda^{7} \mathbb{E} \iint_Q \theta^2  \gamma^{7} \xi_2 |\phi_2|^2 \,dx\,dt,
\end{align}
and
\begin{align}\label{ineeqK1st314}
J_{13} \leq \lambda^{5} \mathbb{E} \iint_Q \theta^2  \gamma^{5}  |\Phi_2|^2 \,dx\,dt.
\end{align}
Combining \eqref{estima3.177sec1}, \eqref{ineqq3.22ste4step3}, \eqref{equalisecgradphi1step3}, \eqref{ineeqK1st310}, \eqref{ineeqK1st311}, \eqref{ineeqK1st311thir}, \eqref{ineeqK1st312}, \eqref{ineeqK1st313}, \eqref{ineeqK1st314}, and taking a small enough $\varepsilon$ and a large $\lambda$, we deduce that
\begin{align}\label{estimsesec1nestp3}
\begin{aligned}
&I(3, h_1) + I(3, h_2) + I(5, \phi_1) + I(5, \phi_2) \\
&\leq C \bigg( \mathcal{L}_{\mathcal{O}_2}(7,\phi_1) + \mathcal{L}_{\mathcal{O}_2}(9,\phi_2) +\lambda^{5} \mathbb{E} \int_0^T \int_{\mathcal{O}_2} \theta^2 \gamma^{5} |\nabla\phi_1|^2 \,dx\,dt \\
&\hspace{1cm}+ \lambda^{4} \mathbb{E} \iint_Q \theta^2 \gamma^{4} |\Phi_1|^2 \,dx\,dt + \lambda^{5} \mathbb{E} \iint_Q \theta^2 \gamma^{5} |\Phi_2|^2 \,dx\,dt \bigg).
\end{aligned}
\end{align}
\textbf{Step 4.} \textbf{Absorbing the localized term on \( \phi_2 \).}\\
Recalling the assumption \eqref{assump1.3} and without loss of generality, we can assume that
\begin{equation*}
a_{21} \geq a_0 > 0, \quad \text{in } (0, T) \times \mathcal{O}_0.
\end{equation*}
Next, we observe that
\begin{align}\label{ineqq3.22}
    a_0 \mathcal{L}_{\mathcal{O}_2}(9, \phi_2) \leq \mathbb{E} \iint_Q w_9 \xi_3 a_{21} |\phi_2|^2 \,dx\,dt.
\end{align}
Using Itô's formula for \( d(w_9 \xi_3 \phi_1 \phi_2) \), we arrive at
\begin{align}\label{3.24estprin}
    \begin{aligned}
    \mathbb{E} \iint_Q w_9 \xi_3 a_{21} |\phi_2|^2 \,dx\,dt &= \mathbb{E} \iint_Q \partial_t w_9 \, \xi_3 \phi_1 \phi_2 \,dx\,dt + \mathbb{E} \iint_Q \nabla \phi_2 \cdot \nabla (w_9 \xi_3 \phi_1) \,dx\,dt \\
    &\quad - \mathbb{E} \iint_Q w_9 \xi_3 a_{12} |\phi_1|^2 \,dx\,dt- \mathbb{E} \iint_Q w_9 \xi_3 a_{22} \phi_1 \phi_2 \,dx\,dt\\
    &\quad  + \mathbb{E} \iint_Q w_9 \xi_3 \phi_1 h_2 \chi_{\mathcal{O}_d} \,dx\,dt  + \mathbb{E} \iint_Q \nabla \phi_1 \cdot \nabla (w_9 \xi_3 \phi_2) \,dx\,dt\\
    &\quad  - \mathbb{E} \iint_Q w_9 \xi_3 a_{11} \phi_1 \phi_2 \,dx\,dt + \mathbb{E} \iint_Q w_9 \xi_3 \phi_2 h_1 \chi_{\mathcal{O}_d} \,dx\,dt \\
    &\quad + \mathbb{E} \iint_Q w_9 \xi_3 \Phi_1 \Phi_2 \,dx\,dt \\
    &= \sum_{14 \leq i \leq 22} J_i.
    \end{aligned}
\end{align}
We first see that
\[
J_{14} \leq C \lambda^{11} \mathbb{E} \iint_Q \theta^2  \gamma^{11}\xi_3 |\phi_1| |\phi_2| \,dx\,dt.
\]
Let \( \varepsilon > 0 \). By Young's inequality, it follows that
\begin{align}\label{ineeJ1}
J_{14} \leq \varepsilon I(5, \phi_2) + \frac{C}{\varepsilon} \lambda^{17} \mathbb{E} \iint_Q \theta^2 \gamma^{17} \xi _3|\phi_1|^2 \,dx\,dt.
\end{align}
It is not difficult to see that
\begin{align*}
J_{15} \leq C \lambda^{10} \mathbb{E} \iint_Q \theta^2 \gamma^{10}\xi_3^{1/2} |\nabla \phi_2||\phi_1| \,dx\,dt + C \lambda^{9} \mathbb{E} \iint_Q \theta^2 \gamma^{9} \xi_3 |\nabla \phi_1| |\nabla \phi_2| \,dx\,dt,
\end{align*}
which leads to
\begin{align}\label{ineqj5}
\begin{aligned}
J_{15} \leq &\ \varepsilon I(5, \phi_2) + \frac{C}{\varepsilon} \lambda^{17} \mathbb{E} \iint_Q \theta^2  \gamma^{17} \xi_3 |\phi_1|^2 \,dx\,dt \\
& + \frac{C}{\varepsilon} \lambda^{15} \mathbb{E} \iint_Q \theta^2  \gamma^{15} \xi_3 |\nabla \phi_1|^2 \,dx\,dt.
\end{aligned}
\end{align}
Notice that
\begin{align}\label{ineqj3third}
J_{16} \leq C \lambda^{9} \mathbb{E} \iint_Q \theta^2  \gamma^{9} \xi_3|\phi_1|^2 \,dx\,dt.
\end{align}
It is easy to see that
\begin{align}\label{ineqj3}
J_{17} \leq \varepsilon I(5, \phi_2) + \frac{C}{\varepsilon} \lambda^{13} \mathbb{E} \iint_Q \theta^2  \gamma^{13} \xi_3|\phi_1|^2 \,dx\,dt,
\end{align}
\begin{align}\label{ineqj4}
J_{18} \leq \varepsilon I(3, h_2) + \frac{C}{\varepsilon} \lambda^{15} \mathbb{E} \iint_Q \theta^2  \gamma^{15} \xi_3|\phi_1|^2 \,dx\,dt.
\end{align}
We also have that
\begin{align*}
J_{19} \leq C \lambda^{10} \mathbb{E} \iint_Q \theta^2 \gamma^{10} \xi_3^{1/2} |\nabla \phi_1| |\phi_2| \,dx\,dt + C \lambda^{9} \mathbb{E} \iint_Q \theta^2 \gamma^{9} \xi_3 |\nabla \phi_1| |\nabla \phi_2| \,dx\,dt,
\end{align*}
which leads to
\begin{align}\label{ineqj2}
\begin{aligned}
J_{19} \leq &\ \varepsilon I(5, \phi_2) + \frac{C}{\varepsilon} \lambda^{15} \mathbb{E} \iint_Q \theta^2 \gamma^{15} \xi_3 |\nabla \phi_1|^2 \,dx\,dt.
\end{aligned}
\end{align}
It is straightforward to see that
\begin{equation}\label{ineqj6}
J_{20} \leq \varepsilon I(5, \phi_2) + \frac{C}{\varepsilon} \lambda^{13} \mathbb{E} \iint_Q \theta^2  \gamma^{13} \xi_3|\phi_1|^2 \,dx\,dt,
\end{equation}
\begin{equation}\label{ineqj7}
J_{21} \leq \varepsilon I(5, \phi_2) + \frac{C}{\varepsilon} \lambda^{13} \mathbb{E} \iint_Q \theta^2 \gamma^{13} \xi_3|h_1|^2 \,dx\,dt,
\end{equation}
and
\begin{equation}\label{ineqj12}
J_{22} \leq \lambda^{9} \mathbb{E} \iint_Q \theta^2 \gamma^{9} |\Phi_1|^2 \,dx\,dt + \lambda^{9} \mathbb{E} \iint_Q \theta^2 \gamma^{9} |\Phi_2|^2 \,dx\,dt.
\end{equation}
Combining \eqref{estimsesec1nestp3}, \eqref{ineqq3.22}, \eqref{3.24estprin}, \eqref{ineeJ1}, \eqref{ineqj5}, \eqref{ineqj3third}, \eqref{ineqj3}, \eqref{ineqj4}, \eqref{ineqj2}, \eqref{ineqj6}, \eqref{ineqj7}, \eqref{ineqj12}, taking a small enough \( \varepsilon \), and a large \( \lambda \), we conclude that
\begin{align}\label{estim3.37secc}
\begin{aligned}
&I(3, h_1) + I(3, h_2) + I(5, \phi_1) + I(5, \phi_2) \\
&\leq C \bigg( \mathcal{L}_{\mathcal{O}_1}(17,\phi_1) + \mathcal{L}_{\mathcal{O}_1}(13,h_1) + \lambda^{15} \mathbb{E} \int_0^T \int_{\mathcal{O}_1} \theta^2 \gamma^{15} |\nabla \phi_1|^2 \,dx\,dt \\
&\hspace{1cm} + \lambda^{9} \mathbb{E} \iint_Q \theta^2 \gamma^{9} |\Phi_1|^2 \,dx\,dt  + \lambda^{9} \mathbb{E} \iint_Q \theta^2 \gamma^{9} |\Phi_2|^2 \,dx\,dt \bigg).
\end{aligned}
\end{align}
\textbf{Step 5.} \textbf{Absorbing the localized term on $\nabla\phi_1$.}\\
We first note that
\begin{align}\label{ineqq3.22ste4}
\lambda^{15} \mathbb{E} \int_0^T \int_{\mathcal{O}_1} \theta^2 \gamma^{15} |\nabla \phi_1|^2 \,dx\,dt \leq \mathbb{E} \iint_Q w_{15} \xi_4 |\nabla \phi_1|^2 \,dx\,dt.
\end{align}
Next, we compute \( d(w_{15} \xi_4 |\phi_1|^2) \). By applying integration by parts, we arrive at
\begin{align}\label{equalisecgradphi1}
\begin{aligned}
2 \mathbb{E} \iint_Q w_{15} \xi_4 |\nabla \phi_1|^2 \,dx\,dt &= - \mathbb{E} \iint_Q \partial_t w_{15} \,\xi_4 |\phi_1|^2 \,dx\,dt - 2 \mathbb{E} \iint_Q \phi_1 \nabla \phi_1\cdot\nabla(w_{15} \xi_4)  \,dx\,dt \\
&\quad + 2 \mathbb{E} \iint_Q w_{15} \xi_4 a_{11} |\phi_1|^2 \,dx\,dt + 2 \mathbb{E} \iint_Q w_{15} \xi_4 a_{21} \phi_1 \phi_2 \,dx\,dt \\
&\quad - 2 \mathbb{E} \iint_Q w_{15} \xi_4 \phi_1 h_1 \chi_{\mathcal{O}_d} \,dx\,dt - \mathbb{E} \iint_Q w_{15} \xi_4 |\Phi_1|^2 \,dx\,dt \\
&= \sum_{23 \leq i \leq 28} J_i.
\end{aligned}
\end{align}
Let \( \varepsilon > 0 \). We begin by noting that
\begin{align}\label{ineeqK1}
J_{23} \leq C \lambda^{17} \mathbb{E} \iint_Q \theta^2  \gamma^{17} \xi_4 |\phi_1|^2 \,dx\,dt.
\end{align}
Additionally, we have that
\[
J_{24} \leq C \lambda^{16} \mathbb{E} \iint_Q \theta^2  \gamma^{16} \xi_4^{1/2} |\phi_1| |\nabla \phi_1| \,dx\,dt,
\]
which leads to
\begin{align}\label{ineeqK2}
J_{24} \leq \varepsilon I(5, \phi_1) 
+ \frac{C}{\varepsilon} \lambda^{29} \mathbb{E} \iint_Q \theta^2 \gamma^{29} \xi_4 |\phi_1|^2 \,dx\,dt.
\end{align}
It is straightforward to see that
\begin{align}\label{ineeqK3}
J_{25} \leq C \lambda^{15} \mathbb{E} \iint_Q \theta^2 \gamma^{15} \xi_4 |\phi_1|^2 \,dx\,dt,
\end{align}
\begin{align}\label{ineeqK712}
J_{26} \leq \varepsilon I(5, \phi_2) 
+ \frac{C}{\varepsilon} \lambda^{25} \mathbb{E} \iint_Q \theta^2 \gamma^{25} \xi_4 |\phi_1|^2 \,dx\,dt.
\end{align}
We also have that
\begin{align}\label{ineeqK7}
J_{27} \leq \varepsilon I(3, h_1) 
+ \frac{C}{\varepsilon} \lambda^{27} \mathbb{E} \iint_Q \theta^2 \gamma^{27}\xi_4 |\phi_1|^2 \,dx\,dt,
\end{align}
and
\begin{align}\label{ineeqK8}
J_{28} \leq \lambda^{15} \mathbb{E} \iint_Q \theta^2 \gamma^{15} |\Phi_1|^2 \,dx\,dt.
\end{align}
Combining  \eqref{estim3.37secc}, \eqref{ineqq3.22ste4}, \eqref{equalisecgradphi1}, \eqref{ineeqK1}, \eqref{ineeqK2}, \eqref{ineeqK3}, \eqref{ineeqK712}, \eqref{ineeqK7}, \eqref{ineeqK8}, and taking a small \( \varepsilon \) and a large \( \lambda \), we find that
\begin{align}\label{ineaastep4}
\begin{aligned}
&I(3, h_1) + I(3, h_2) + I(5, \phi_1) + I(5, \phi_2) \\
&\leq C \bigg( \mathcal{L}_{\mathcal{O}_0}(29,\phi_1)+ \mathcal{L}_{\mathcal{O}_0}(13,h_1) + \lambda^{15} \mathbb{E} \iint_Q \theta^2 \gamma^{15} |\Phi_1|^2 \,dx\,dt \\
&\hspace{1cm}   + \lambda^{9} \mathbb{E} \iint_Q \theta^2 \gamma^{9} |\Phi_2|^2 \,dx\,dt \bigg).
\end{aligned}
\end{align}
\textbf{Step 6.} \textbf{Absorbing the localized term on $h_1$.}\\
By the classical energy estimate for the system  \eqref{systeofh1h2}, we have that
\begin{align*}
    |h_1|^2_{L^2_{\mathcal{F}}(0,T;L^2(G))} + |h_2|^2_{L^2_{\mathcal{F}}(0,T;L^2(G))} \leq C \left|\sum_{i=1}^m \frac{\alpha_i}{\beta_i}\rho_*^{-2}\phi_1\chi_{G_i}\right|_{L^2_{\mathcal{F}}(0,T;L^2(G))}^2,
\end{align*}
which leads to the following estimate:
\begin{align}\label{keyineqh1}
    \mathbb{E}\iint_Q (|h_1|^2+|h_2|^2)\,dx\,dt \leq C\sum_{i=1}^m \frac{\alpha_i^2}{\beta_i^2}\mathbb{E}\iint_Q \rho_*^{-4}|\phi_1|^2 \,dx\,dt.
\end{align}
Since \( \rho_*^{-4} \leq \theta^2 \), the inequality \eqref{keyineqh1} implies that 
\begin{align}\label{thisineh1}
C\mathcal{L}_{\mathcal{O}_0}(13,h_1)\leq C\lambda^{8}\sum_{i=1}^m \frac{\alpha_i^2}{\beta_i^2} I(5,\phi_1).
\end{align}
Taking sufficiently large \( \beta_i \), for \( i = 1, 2, \dots, m \), in \eqref{thisineh1}, we obtain
\begin{align}\label{inelastoneca}
C\mathcal{L}_{\mathcal{O}_0}(13,h_1)\leq \frac{1}{2}I(5,\phi_1).
\end{align}
Therefore, combining \eqref{inelastoneca} and \eqref{ineaastep4}, we deduce that
\begin{align*}
\begin{aligned}
&I(3, h_1) + I(3, h_2) + I(5, \phi_1) + I(5, \phi_2) \\
&\leq C \bigg( \mathcal{L}_{\mathcal{O}_0}(29,\phi_1)\leq + \lambda^{15} \mathbb{E} \iint_Q \theta^2 \gamma^{15} |\Phi_1|^2 \,dx\,dt \\
&\hspace{1cm}   + \lambda^{9} \mathbb{E} \iint_Q \theta^2 \gamma^{9} |\Phi_2|^2 \,dx\,dt \bigg),
\end{aligned}
\end{align*}
which implies the desired inequality \eqref{ineq5.1}. This concludes the proof of Lemma \ref{thmm5.1}.
\end{proof}

By repeating the computations in the proof of Lemma \ref{thmm5.1}, one can obtain a Carleman estimate similar to \eqref{ineq5.1} for the system \eqref{ADJSO1sec}. However, in this case, the term ``\( h_1 = \sum_{i=1}^m \alpha_i \psi_1^i \)'' does not appear on the right-hand side of the first equation in \eqref{ADJSO1sec}, which simplifies the approach. Specifically, to derive this Carleman estimate for the system \eqref{ADJSO1sec}, we proceed as follows:
\begin{enumerate}
\item By adapting the ideas from Steps 1 to 4 in the proof of Lemma \ref{thmm5.1}, we derive the inequality \eqref{estim3.37secc} for the system \eqref{ADJSO1sec}, but without the localized term involving \( h_1 \) on the right-hand side, as the term \( J_{21} = 0 \) in the identity \eqref{3.24estprin}.
    \item Similarly to Step 5 in the proof of Lemma \ref{thmm5.1}, we absorb the localized term involving \( \nabla \phi_1 \) on the right-hand side of \eqref{estim3.37secc}.
\item Consequently, we establish the following Carleman estimate for the system \eqref{ADJSO1sec}.
\end{enumerate}
\begin{lm}\label{thmm5.2sec}
Assume that \eqref{Assump10} and \eqref{assump1.3} hold. There exists a constant \(C > 0\) and a sufficiently large \(\overline{\lambda} \geq 1\) (depending only on \(G\), \(G_0\), \(\mathcal{O}_d\), \(\overline{\mu}\), \(\alpha_i\) and \(a_{ij}\)) such that for all \(\lambda \geq \overline{\lambda}\), and \(\phi_k^T \in L^2_{\mathcal{F}_T}(\Omega; L^2(G))\) (\(k=1,2\)), the corresponding solution of the system \eqref{ADJSO1sec} satisfies that
\begin{align}\label{ineq5.1sec}
\begin{aligned}
&I(5,\phi_1) + I(5,\phi_2) + I(3,h_1) + I(3,h_2) \\
&\leq C\bigg[\lambda^{29} \mathbb{E} \iint_{Q_0} \theta^2 \gamma^{29} |\phi_1|^2 \,dx\,dt + \lambda^{15} \mathbb{E} \iint_Q \theta^2 \gamma^{15} |\Phi_1|^2 \,dx\,dt \\
&\hspace{1cm}   + \lambda^{9} \mathbb{E} \iint_Q \theta^2 \gamma^{9} |\Phi_2|^2 \,dx\,dt\bigg],
\end{aligned}
\end{align}
where \( h_k = \sum_{i=1}^m \alpha_i \psi_k^i \), \,\(k = 1,2\).
\end{lm}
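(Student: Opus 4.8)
The plan is to re-run the proof of Lemma~\ref{thmm5.1} almost verbatim, exploiting the fact that the system \eqref{ADJSO1sec} is structurally simpler: its first equation no longer contains the feedback term $\sum_{i=1}^m\alpha_i\psi_1^i\chi_{\mathcal{O}_{i,d}}=h_1\chi_{\mathcal{O}_d}$. As before, put $h_k=\sum_{i=1}^m\alpha_i\psi_k^i$ ($k=1,2$); then $(h_1,h_2)$ solves the forward stochastic parabolic system obtained from \eqref{systeofh1h2} by replacing the source of the $h_1$-equation by $\sum_{i=1}^m\frac{\alpha_i}{\beta_i}\phi_1\chi_{G_i}$ (with no $\rho_*^{-2}$ factor). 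First I would apply the forward Carleman estimate \eqref{carfor5.6} to the $(h_1,h_2)$-system and the backward Carleman estimate \eqref{3.22carlemgenBack} (with $d=5$, $\mathcal{B}=\mathcal{O}_4$) to the $(\phi_1,\phi_2)$-equations, and absorb the lower-order coupling and source contributions exactly as in Step~1; since the $\phi_1$-equation now has no $h$-source, the term $\lambda^2\mathbb{E}\iint_Q\theta^2\gamma^2|h_1|^2$ drops out of the analogue of \eqref{car4.14}, and one reaches the analogue of \eqref{firsine1}.

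Next I would carry out Steps~2--4 of the proof of Lemma~\ref{thmm5.1} essentially unchanged: the Itô computations for $d\big(w_3\xi_1(h_1\phi_1+h_2\phi_2)\big)$, $d\big(w_5\xi_2|\phi_2|^2\big)$ and $d\big(w_9\xi_3\phi_1\phi_2\big)$, together with Young's inequality and the weight bounds \eqref{timedrrives}, produce the same chain of estimates with the same bookkeeping of powers and of the nested cut-offs $\xi_1,\dots,\xi_4$. The decisive difference is that in the analogue of identity \eqref{3.24estprin} the term $J_{21}=\mathbb{E}\iint_Q w_9\xi_3\,\phi_2 h_1\chi_{\mathcal{O}_d}\,dx\,dt$ is now zero, because the $\phi_1$-equation carries no $h_1\chi_{\mathcal{O}_d}$ source; hence the localized term $\mathcal{L}_{\mathcal{O}_1}(13,h_1)$ never appears, and the analogue of \eqref{estim3.37secc} retains only $\mathcal{L}_{\mathcal{O}_1}(17,\phi_1)$, the localized gradient term $\lambda^{15}\mathbb{E}\int_0^T\!\int_{\mathcal{O}_1}\theta^2\gamma^{15}|\nabla\phi_1|^2$, and the two global $\Phi$-terms on its right-hand side. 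Step~5 (the identity \eqref{equalisecgradphi1} for $d(w_{15}\xi_4|\phi_1|^2)$) then absorbs the localized gradient term and replaces it by the observation term $\mathcal{L}_{\mathcal{O}_0}(29,\phi_1)$, which is bounded by $\lambda^{29}\mathbb{E}\iint_{Q_0}\theta^2\gamma^{29}|\phi_1|^2\,dx\,dt$. Since no $h_1$-localized term has survived, the analogue of Step~6 is unnecessary --- in particular the largeness of $\beta_i$ plays no role at this stage --- and \eqref{ineq5.1sec} follows.

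The point requiring real care is the localized term $\mathcal{L}_{\mathcal{O}_4}(3,h_1)$ that the forward Carleman puts on the right-hand side when applied to the $h_1$-equation in Step~1. In Lemma~\ref{thmm5.1} this term was absorbed, together with $\mathcal{L}_{\mathcal{O}_4}(3,h_2)$, inside the single Itô identity \eqref{ineqq1}, precisely because the $\phi_1$-equation there carried the source $h_1\chi_{\mathcal{O}_d}$, which puts $w_3\xi_1|h_1|^2$ on the left of that identity; for \eqref{ADJSO1sec} that source is gone, so the same identity only yields $w_3\xi_1|h_2|^2$, and, moreover, the $\rho_*$-mechanism (which, via $\rho_*^{-4}\le\theta^2$ and the smallness of $1/\beta_i$, disposed of the $h_1$-terms at the end of the proof of Lemma~\ref{thmm5.1}) is no longer available --- an unweighted energy estimate for the $h$-system cannot be reinjected into $I(5,\phi_1)$. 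The remedy is to exploit assumption \eqref{assump1.3}: since $h_1$ enters the $h_2$-equation through the sign-definite coefficient $a_{21}$, one can, already in Step~1, substitute $a_{21}h_1=\partial_t h_2-\Delta h_2-a_{22}h_2$ on $\mathcal{O}_0$ and integrate by parts so as to transfer the derivatives onto the Carleman weight rather than onto $h_1$, thereby upgrading \eqref{carfor5.6} for the pair $(h_1,h_2)$ to an estimate observing $h_2$ only (this is the standard coupled-Carleman argument for parabolic systems with one observation; for the stochastic version see \cite{Aelgrou24CSPS}, and \cite{surveyAmmarKBGT} in the deterministic case). With $\mathcal{L}_{\mathcal{O}_4}(3,h_1)$ thus never produced, only $\mathcal{L}_{\mathcal{O}_4}(3,h_2)$ is left, which is absorbed in Step~2 as above. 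I expect this coupled-Carleman step for the $h$-system --- rather than any of the now-routine bookkeeping --- to be the main obstacle.
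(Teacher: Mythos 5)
Your outline coincides with the paper's own (very brief) proof of this lemma: rerun Steps 1--5 of Lemma \ref{thmm5.1} for \eqref{ADJSO1sec}, note that $J_{21}=0$ in the analogue of \eqref{3.24estprin} so that the localized $h_1$-term of \eqref{estim3.37secc} never appears, and discard Step 6 (so no largeness of $\beta_i$ is needed) --- all of which you reproduce correctly. Where you go beyond the paper is in flagging the term $\mathcal{L}_{\mathcal{O}_4}(3,h_1)$ created in Step 1, and your diagnosis is accurate: in the proof of Lemma \ref{thmm5.1} the left-hand side of \eqref{ineqq1} contains $\mathbb{E}\iint_Q w_3\xi_1|h_1|^2$ only because the $\phi_1$-equation of \eqref{ADJSO1} carries the source $h_1\chi_{\mathcal{O}_d}$; for \eqref{ADJSO1sec} that source is absent, the same It\^o identity returns only the $|h_2|^2$-term, and (as you also observe) the Step-6 energy/$\rho_*$ mechanism is unavailable, so a verbatim repetition leaves $\mathcal{L}_{\mathcal{O}_4}(3,h_1)$ unabsorbed --- a point the paper's sketch passes over in silence. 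Your repair --- a one-observation (cascade) Carleman estimate for the $(h_1,h_2)$-system, recovering the local $h_1$-term from the sign-definite coupling $a_{21}h_1$ in the $h_2$-equation via \eqref{assump1.3}, i.e.\ the same device the paper applies to $(\phi_1,\phi_2)$ in Step 4 and as in \cite{Aelgrou24CSPS} --- is the right, and as far as I can see the only natural, mechanism; it is also clean here because the source $\sum_{i}\frac{\alpha_i}{\beta_i}\phi_1\chi_{G_i}$ of the $h_1$-equation vanishes on the relevant cut-off, since $G_i\cap G_0=\emptyset$ and the cut-off is supported in $\mathcal{O}_0\subset G_0$.

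One caveat on your bookkeeping: substituting $a_{21}h_1$ through the $h_2$-equation and integrating by parts produces, besides $\varepsilon\,[I(3,h_1)+I(3,h_2)]$, localized terms in $h_2$ and $\nabla h_2$ carrying powers of $\lambda\gamma$ strictly larger than $3$; so your statement that afterwards ``only $\mathcal{L}_{\mathcal{O}_4}(3,h_2)$ is left, absorbed in Step 2 as above'' with ``the same bookkeeping of powers'' is optimistic. Those terms are handled by the same duality/cut-off arguments (It\^o for $d(w_k\xi h_2\phi_2)$, $d(w_k\xi|h_2|^2)$, etc.) but with $k>3$, and the larger exponents then propagate through Steps 3--5, so the final powers in \eqref{ineq5.1sec} may come out larger than $29$, $15$, $9$ unless the weight exponents are re-tuned; this is immaterial for the observability inequality the lemma feeds, but it should be stated. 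With that accounting made explicit, your plan yields a complete proof, and a more careful one than the paper's sketch.
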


\subsection{Improved Carleman estimates for the systems \eqref{ADJSO1} and \eqref{ADJSO1sec}}

In the rest of this subsection, due to the presence of source terms \(y^i_d = (y^i_{1,d}, y^i_{2,d})\), \(i = 1, 2, \dots, m\), in the system \eqref{eqq4.7}, we will require the following improved Carleman estimate for the system \eqref{ADJSO1}.
\begin{thm}\label{lem4.5st} 
Assume that \eqref{Assump10} and \eqref{assump1.3} hold. There exists a constant \(C > 0\), and a large \(\overline{\beta} \geq1\)  such that for all \(\beta_i \geq \overline{\beta}\), \((i = 1, 2, \dots, m)\),   $\lambda\geq\overline{\lambda}$ given in Lemma \ref{thmm5.1}, and \(\phi_k^T \in L^2_{\mathcal{F}_T}(\Omega; L^2(G))\) (\(k = 1, 2\)), the corresponding solution of the system \eqref{ADJSO1} satisfies that
\begin{align}\label{improvedCarl}
\begin{aligned}
& \mathbb{E} \left|\phi_1(0)\right|^2_{L^2(G)} + \mathbb{E} \left|\phi_2(0)\right|^2_{L^2(G)}  + \overline{I}_{0,T}(5,\phi_1) + \overline{I}_{0,T}(5,\phi_2)  \\
&\quad + \overline{I}_{0,T}(3,h_1) + \overline{I}_{0,T}(3,h_2) \\
&\leq C\bigg[\lambda^{29} \mathbb{E} \iint_{Q_0} \theta^2 \gamma^{29} |\phi_1|^2 \,dx\,dt + \lambda^{15} \mathbb{E} \iint_Q \theta^2 \gamma^{15} |\Phi_1|^2 \,dx\,dt \\
&\hspace{1cm}   + \lambda^{9} \mathbb{E} \iint_Q \theta^2 \gamma^{9} |\Phi_2|^2 \,dx\,dt\bigg],
\end{aligned}
\end{align}
where \( h_k = \sum_{i=1}^m \alpha_i \psi_k^i \), \;\( k = 1,2 \).
\end{thm}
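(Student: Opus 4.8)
The plan is to upgrade the Carleman estimate \eqref{ineq5.1} of Lemma \ref{thmm5.1} — whose weights $\theta,\gamma$ degenerate as $t\to 0$ — to the non-degenerate estimate \eqref{improvedCarl}, by complementing it on the half-interval $[0,T/2]$, where the modified weights $\overline{\theta},\overline{\gamma}$ of \eqref{rec1} are frozen, with the classical energy (dissipation) estimates for the forward system \eqref{systeofh1h2} for $(h_1,h_2)$ and the backward system \eqref{ADJSO1} for $(\phi_1,\phi_2)$. I abbreviate by $\mathcal{R}:=\lambda^{29}\mathbb{E}\iint_{Q_0}\theta^2\gamma^{29}|\phi_1|^2\,dx\,dt+\lambda^{15}\mathbb{E}\iint_Q\theta^2\gamma^{15}|\Phi_1|^2\,dx\,dt+\lambda^{9}\mathbb{E}\iint_Q\theta^2\gamma^{9}|\Phi_2|^2\,dx\,dt$ the common right-hand side, so that $I(5,\phi_1)+I(5,\phi_2)+I(3,h_1)+I(3,h_2)\leq C\mathcal{R}$ by Lemma \ref{thmm5.1}. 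First, on $[T/2,T]$ one has $\ell(t)=t(T-t)$, hence $\overline{\theta}=\theta$, $\overline{\gamma}=\gamma$ there, and since $\lambda\geq 1$ this already gives $\overline{I}_{T/2,T}(5,\phi_k)+\overline{I}_{T/2,T}(3,h_k)\leq I(5,\phi_k)+I(3,h_k)\leq C\mathcal{R}$ for $k=1,2$. On $[0,T/2]$, on the other hand, $\overline{\gamma}\equiv 4/T^2$ and $0<\overline{\theta}\leq 1$ (as $\overline{\alpha}<0$), whence $\overline{I}_{0,T/2}(d,z)\leq C\,\mathbb{E}\iint_{(0,T/2)\times G}(z^2+|\nabla z|^2)\,dx\,dt$ with $C$ independent of $\lambda$; so it remains only to bound $\mathbb{E}\iint_{(0,T/2)\times G}(|\phi_k|^2+|\nabla\phi_k|^2)$, $\mathbb{E}\iint_{(0,T/2)\times G}(|h_k|^2+|\nabla h_k|^2)$ and $\mathbb{E}|\phi_k(0)|^2_{L^2(G)}$ by $C\mathcal{R}$.

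For the $h$-part I would invoke the standard energy estimate for the forward system \eqref{systeofh1h2}, which starts from zero:
$$\sum_{k=1}^2\Big(\mathbb{E}\max_{t\in[0,T]}|h_k(t)|^2_{L^2(G)}+\mathbb{E}\iint_Q|\nabla h_k|^2\,dx\,dt\Big)\leq C\sum_{i=1}^m\frac{\alpha_i^2}{\beta_i^2}\,\mathbb{E}\iint_Q\rho_*^{-4}|\phi_1|^2\,dx\,dt.$$
Since $\rho_*^{-4}\leq\theta^2$ and $\gamma$ is bounded below by a positive constant on $Q$ (so that $\theta^2\leq C\theta^2\gamma^5$), the right-hand side is $\leq C\,I(5,\phi_1)\leq C\mathcal{R}$. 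In particular $\mathbb{E}\iint_{(0,T/2)\times G}(|h_k|^2+|\nabla h_k|^2)\,dx\,dt\leq C\mathcal{R}$ and $\mathbb{E}\iint_{Q}(|h_1|^2+|h_2|^2)\,dx\,dt\leq C\mathcal{R}$.

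For the $\phi$-part, the hard point is to extract a pointwise-in-time bound from the space-time information in \eqref{ineq5.1}. Since $\theta^2\gamma^5$ is bounded below by a positive constant on $(T/2,3T/4)\times G$, the estimate \eqref{ineq5.1} gives $\mathbb{E}\int_{T/2}^{3T/4}\big(|\phi_1(t)|^2_{L^2(G)}+|\phi_2(t)|^2_{L^2(G)}\big)\,dt\leq C\mathcal{R}$. Next, recalling that by \eqref{Assump10} the source terms in the first two equations of \eqref{ADJSO1} are exactly $h_1\chi_{\mathcal{O}_d}$ and $h_2\chi_{\mathcal{O}_d}$, I apply Itô's formula to $|\phi_k|^2$, integrate over $G$, use Young's inequality and absorb the coupling terms $a_{ij}\phi_j$; the function $g(t):=\mathbb{E}|\phi_1(t)|^2_{L^2(G)}+\mathbb{E}|\phi_2(t)|^2_{L^2(G)}$ then satisfies $g'(t)\geq -Cg(t)-C\,\mathbb{E}\int_{\mathcal{O}_d}\big(|h_1(t)|^2+|h_2(t)|^2\big)\,dx$. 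Integrating this Grönwall-type inequality between $t\in[0,T/2]$ and $s$, and averaging in $s$ over $(T/2,3T/4)$, I obtain
$$\max_{t\in[0,T/2]}g(t)\leq C\Big(\mathbb{E}\int_{T/2}^{3T/4}\big(|\phi_1(s)|^2_{L^2(G)}+|\phi_2(s)|^2_{L^2(G)}\big)\,ds+\mathbb{E}\iint_{Q}\big(|h_1|^2+|h_2|^2\big)\,dx\,dt\Big)\leq C\mathcal{R}.$$
In particular $\mathbb{E}|\phi_k(0)|^2_{L^2(G)}\leq C\mathcal{R}$ and $\mathbb{E}\iint_{(0,T/2)\times G}(|\phi_1|^2+|\phi_2|^2)\,dx\,dt\leq C\mathcal{R}$. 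Finally, the standard energy estimate for the backward system \eqref{ADJSO1} on $[0,T/2]$ with terminal datum $(\phi_1(T/2),\phi_2(T/2))$ controls $\mathbb{E}\iint_{(0,T/2)\times G}(|\nabla\phi_1|^2+|\nabla\phi_2|^2)\,dx\,dt$ by $C\big(g(T/2)+\mathbb{E}\iint_{(0,T/2)\times\mathcal{O}_d}(|h_1|^2+|h_2|^2)\,dx\,dt\big)\leq C\mathcal{R}$. Collecting the $[0,T/2]$- and $[T/2,T]$-contributions yields \eqref{improvedCarl}.

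I expect the only genuine obstacle to be this passage from the space-time inequality \eqref{ineq5.1} to the pointwise-in-time control of $g$ near $t=T/2$, handled by the averaged Grönwall step above; everything else reduces to routine energy estimates together with the elementary comparisons $\rho_*^{-4}\leq\theta^2$ and $\overline{\theta}\leq 1$, $\overline{\gamma}\equiv 4/T^2$ on $[0,T/2]$. Note that the lower bound of $\theta^2\gamma^5$ on $(T/2,3T/4)\times G$ deteriorates as $\lambda\to\infty$, so the constant $C$ in \eqref{improvedCarl} may depend on $\lambda$; this is harmless, since $\lambda$ is fixed equal to $\overline{\lambda}$ in the sequel.
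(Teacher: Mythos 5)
Your proposal is correct, and its skeleton coincides with the paper's: split time at $T/2$, use that $\overline{\theta}=\theta$ and $\overline{\gamma}=\gamma$ on $[T/2,T]$ so that Lemma \ref{thmm5.1} covers that half, and on $[0,T/2]$, where the barred weights are constant in $t$ and bounded, reduce everything to plain energy quantities controlled through the middle interval $(T/2,3T/4)$ and through the $h$-system. You differ in two implementation choices. First, where the paper multiplies $(\phi_1,\phi_2;\Phi_1,\Phi_2)$ by the cutoff $\kappa$ of \eqref{kappadef} and applies the backward energy estimate to the truncated system \eqref{equaphitilde} (the term $\kappa'\phi_k$ producing the $(T/2,3T/4)$-observation), you reach the same bounds on $\mathbb{E}|\phi_k(0)|^2_{L^2(G)}$ and on $\phi_k,\nabla\phi_k$ over $(0,T/2)$ via the Itô identity for $|\phi_k|^2_{L^2(G)}$, a backward Gr\"onwall (integrating-factor) inequality, and averaging the terminal time over $(T/2,3T/4)$; these devices are interchangeable, and your explicit gradient estimate on $[0,T/2]$ with terminal datum $\phi(T/2)$ makes precise what the paper leaves implicit in passing to \eqref{3.46inee}. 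Second, and more substantively, the paper bounds $\overline{I}_{0,\frac{T}{2}}(3,h_k)$ by $C\sum_i(\alpha_i^2/\beta_i^2)\,\mathbb{E}\iint_Q\overline{\theta}^2|\phi_1|^2\,dx\,dt$ using $\rho_*^{-4}\le\overline{\theta}^2$ and then absorbs it into $\tfrac12\,\overline{I}_{0,T}(5,\phi_1)$ by enlarging $\overline{\beta}$ a second time (this is where the extra largeness condition in the theorem originates), whereas you bound the same quantity directly by $C\,I(5,\phi_1)\le C\mathcal{R}$ through $\rho_*^{-4}\le\theta^2\le C\,\theta^2\gamma^5$ and Lemma \ref{thmm5.1} itself, so that $\beta_i$ need only be large enough for Lemma \ref{thmm5.1}; this is a small but genuine simplification. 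Your closing caveat that the constant may depend on $\lambda$, through the lower bound of $\theta^2\gamma^5$ on $(T/2,3T/4)\times G$, applies equally to the paper's inequality \eqref{iequuinet3.599} and is harmless since $\lambda$ is fixed at $\overline{\lambda}$ in the sequel, so it is not a discrepancy.
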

\begin{proof}
Let \(\kappa \in C^1([0,T])\) be the function such that
\begin{equation}\label{kappadef}
\kappa =
\begin{cases}
1 & \text{in } [0, T/2], \\
0 & \text{in } [3T/4, T],
\end{cases}
\quad \text{and} \quad |\kappa'(t)| \leq C/T.
\end{equation}
Set \((\widetilde{\phi}_1, \widetilde{\phi}_2; \widetilde{\Phi}_1, \widetilde{\Phi}_2) = \kappa (\phi_1, \phi_2; \Phi_1, \Phi_2)\). Then \((\widetilde{\phi}_1, \widetilde{\phi}_2; \widetilde{\Phi}_1, \widetilde{\Phi}_2)\) is the solution of the system
\begin{equation}\label{equaphitilde}
\begin{cases}
\begin{array}{ll}
d\widetilde{\phi}_1 + \Delta \widetilde{\phi}_1 \, dt = \left[-a_{11} \widetilde{\phi}_1 - a_{21} \widetilde{\phi}_2 + \kappa h_1\chi_{\mathcal{O}_d} + \kappa' \phi_1\right] \, dt + \widetilde{\Phi}_1 \, dW(t) & \text{in } Q, \\
d\widetilde{\phi}_2 + \Delta \widetilde{\phi}_2 \, dt = \left[-a_{12} \widetilde{\phi}_1-a_{22} \widetilde{\phi}_2 + \kappa h_2\chi_{\mathcal{O}_d} + \kappa' \phi_2\right] \, dt + \widetilde{\Phi}_2 \, dW(t) & \text{in } Q, \\
\widetilde{\phi}_1=\widetilde{\phi}_2 = 0 & \text{on } \Sigma, \\
\widetilde{\phi}_1(T)=\widetilde{\phi}_2(T) = 0 & \text{in } G.
\end{array}
\end{cases}
\end{equation}
From \eqref{kappadef} and the classical energy estimate for \eqref{equaphitilde}, it is easy to see that
\begin{align*}
\begin{aligned}
&\, \mathbb{E} \left|\phi_1(0)\right|^2_{L^2(G)} + \mathbb{E} \left|\phi_2(0)\right|^2_{L^2(G)} 
+ \mathbb{E} \int_0^{T/2} \int_G \big(|\phi_1|^2 + |\phi_2|^2 \big) \,dx\,dt \\
&\leq C \bigg[ \mathbb{E} \int_{T/2}^{3T/4} \int_G \big(|\phi_1|^2 + |\phi_2|^2\big) \,dx\,dt 
+ \mathbb{E} \int_{0}^{3T/4} \int_G \big(|h_1|^2 + |h_2|^2\big) \,dx\,dt \bigg].
\end{aligned}
\end{align*}
Recalling \eqref{inteIbar}, it follows that
\begin{align}\label{3.46inee}
    \begin{aligned}
&\, \mathbb{E} |\phi_1(0)|^2_{L^2(G)} + \mathbb{E} |\phi_2(0)|^2_{L^2(G)} + \overline{I}_{0,\frac{T}{2}}(5,\phi_1) + \overline{I}_{0,\frac{T}{2}}(5,\phi_2) \\
&\leq C \bigg[ \overline{I}_{0,\frac{T}{2}}(3,h_1) + \overline{I}_{0,\frac{T}{2}}(3,h_2) 
+ \mathbb{E} \int_{T/2}^{3T/4} \int_G \big(|\phi_1|^2 + |\phi_2|^2 + |h_1|^2 + |h_2|^2\big) \,dx\,dt \bigg],
\end{aligned}
\end{align}
which implies that
\begin{align}\label{estimmae3.56}
    \begin{aligned}
&\, \mathbb{E} |\phi_1(0)|^2_{L^2(G)} + \mathbb{E} |\phi_2(0)|^2_{L^2(G)} + \overline{I}_{0,\frac{T}{2}}(5,\phi_1) + \overline{I}_{0,\frac{T}{2}}(5,\phi_2) + \overline{I}_{0,\frac{T}{2}}(3,h_1) + \overline{I}_{0,\frac{T}{2}}(3,h_2) \\
&\leq C \bigg[ \overline{I}_{0,\frac{T}{2}}(3,h_1) + \overline{I}_{0,\frac{T}{2}}(3,h_2) 
+ \mathbb{E} \int_{T/2}^{3T/4} \int_G \big(|\phi_1|^2 + |\phi_2|^2 + |h_1|^2 + |h_2|^2\big) \,dx\,dt \bigg].
\end{aligned}
\end{align}
Since \(\theta = \overline{\theta}\) and \(\gamma = \overline{\gamma}\) on \((T/2, T)\), it is not difficult to see that
\begin{align}\label{estim3.577}
\begin{aligned}
&\, \overline{I}_{\frac{T}{2},T}(5,\phi_1) + \overline{I}_{\frac{T}{2},T}(5,\phi_2) 
+ \overline{I}_{\frac{T}{2},T}(3,h_1) + \overline{I}_{\frac{T}{2},T}(3,h_2) \\
&\leq \left[ I(5,\phi_1) + I(5,\phi_2) + I(3,h_1) + I(3,h_2) \right].
\end{aligned}
\end{align}
Adding \eqref{estim3.577} and \eqref{estimmae3.56}, we get
\begin{align}\label{ineqq3.58new}
    \begin{aligned}
&\, \mathbb{E} |\phi_1(0)|^2_{L^2(G)} + \mathbb{E} |\phi_2(0)|^2_{L^2(G)} 
+ \overline{I}_{0,T}(5,\phi_1) + \overline{I}_{0,T}(5,\phi_2) + \overline{I}_{0,T}(3,h_1) + \overline{I}_{0,T}(3,h_2) \\
&\leq C \bigg[ \overline{I}_{0,\frac{T}{2}}(3,h_1) + \overline{I}_{0,\frac{T}{2}}(3,h_2) 
+ \mathbb{E} \int_{T/2}^{3T/4} \int_G \big(|\phi_1|^2 + |\phi_2|^2 + |h_1|^2 + |h_2|^2\big) \,dx\,dt \\
&\hspace{1cm} + I(5,\phi_1) + I(5,\phi_2) + I(3,h_1) + I(3,h_2) \bigg].
\end{aligned}
\end{align}
Notice also that
\begin{align}\label{iequuinet3.599}
\mathbb{E} \int_{T/2}^{3T/4} \int_G \big(|\phi_1|^2 + |\phi_2|^2 + |h_1|^2 + |h_2|^2\big) \,dx\,dt
\leq C \big[I(5,\phi_1) + I(5,\phi_2) + I(3,h_1) + I(3,h_2)\big].
\end{align}
From \eqref{iequuinet3.599} and \eqref{ineqq3.58new}, we get
\begin{align}\label{ineqq3.58}
    \begin{aligned}
&\, \mathbb{E} |\phi_1(0)|^2_{L^2(G)} + \mathbb{E} |\phi_2(0)|^2_{L^2(G)} 
+ \overline{I}_{0,T}(5,\phi_1) + \overline{I}_{0,T}(5,\phi_2) + \overline{I}_{0,T}(3,h_1) + \overline{I}_{0,T}(3,h_2) \\
&\leq C \bigg[ \overline{I}_{0,\frac{T}{2}}(3,h_1) + \overline{I}_{0,\frac{T}{2}}(3,h_2) 
+ I(5,\phi_1) + I(5,\phi_2) + I(3,h_1) + I(3,h_2) \bigg].
\end{aligned}
\end{align}
To eliminate the first two terms on the right-hand side of \eqref{ineqq3.58}, we use the classical energy estimate for the state \((h_1, h_2)\) in the system \eqref{ADJSO1}. It is easy to see that
\begin{align*}
    \mathbb{E} \int_0^{T/2} \int_G \big( |h_1|^2 + |h_2|^2 \big) \,dx\,dt
    \leq C \sum_{i=1}^m \frac{\alpha_i^2}{\beta_i^2} \mathbb{E} \iint_Q \rho_*^{-4} |\phi_1|^2 \,dx\,dt.
\end{align*}
Since $\rho_*^{-4}\leq\overline{\theta}^2$, we conclude that
\begin{align}\label{inequl3.59}
    \mathbb{E} \int_0^{T/2} \int_G \big( |h_1|^2 + |h_2|^2 \big) \,dx\,dt
    \leq C \sum_{i=1}^m \frac{\alpha_i^2}{\beta_i^2} \mathbb{E} \iint_Q \overline{\theta}^2 |\phi_1|^2 \,dx\,dt.
\end{align}
Recalling \eqref{rec1}-\eqref{inteIbar}, and by selecting sufficiently large \( \beta_i \geq \overline{\beta} \) for \( i = 1, 2, \dots, m \), we obtain that
\begin{align}\label{ineqaull3.62}
    C\left[\overline{I}_{0,\frac{T}{2}}(3,h_1) + \overline{I}_{0,\frac{T}{2}}(3,h_2)\right] 
    \leq \frac{1}{2} \overline{I}_{0,T}(5,\phi_1).
\end{align}
From \eqref{ineqaull3.62} and \eqref{ineqq3.58}, we conclude that
\begin{align}\label{ineqq3.58laste}
\begin{aligned}
&\, \mathbb{E} |\phi_1(0)|^2_{L^2(G)} + \mathbb{E} |\phi_2(0)|^2_{L^2(G)} + \overline{I}_{0,T}(5,\phi_1) + \overline{I}_{0,T}(5,\phi_2) \\
&\quad + \overline{I}_{0,T}(3,h_1) + \overline{I}_{0,T}(3,h_2) \\
&\leq C \big[I(5,\phi_1) + I(5,\phi_2) + I(3,h_1) + I(3,h_2)\big].
\end{aligned}
\end{align}
Finally, by combining \eqref{ineqq3.58laste} and \eqref{ineq5.1}, we establish the desired estimate \eqref{improvedCarl}.
\end{proof}

Similarly to the proof of Theorem \ref{lem4.5st} and considering the presence of the source terms \( y^i_{2,d}\), \(i = 1, 2, \dots, m\), in the system \eqref{eqq4.7sec}, we derive the following improved Carleman estimate for  \eqref{ADJSO1sec}.
\begin{thm}\label{lth5.1sec} 
Assume that \eqref{Assump10} and \eqref{assump1.3} hold. There exists a constant \(C > 0\), and a large \(\overline{\beta} \geq1\)  such that for all \(\beta_i \geq \overline{\beta}\), \(i = 1, 2, \dots, m\),   $\lambda\geq\overline{\lambda}$ given in Lemma \ref{thmm5.2sec}, and \(\phi_k^T \in L^2_{\mathcal{F}_T}(\Omega; L^2(G))\) (\(k = 1, 2\)), the corresponding solution of the system \eqref{ADJSO1sec} satisfies that
\begin{align}\label{ineq5.1car5.1sec}
\begin{aligned}
&\,\mathbb{E} \big|\phi_1(0)\big|^2_{L^2(G)} + \mathbb{E} \big|\phi_2(0)\big|^2_{L^2(G)} + \overline{I}_{0,T}(5,\phi_1) + \overline{I}_{0,T}(5,\phi_2)\\
&\quad + \overline{I}_{0,T}(3,h_1) + \overline{I}_{0,T}(3,h_2) \\
&\leq C \bigg[ \lambda^{29} \mathbb{E} \iint_{Q_0} \theta^2 \gamma^{29} |\phi_1|^2 \,dx\,dt + \lambda^{15} \mathbb{E} \iint_Q \theta^2 \gamma^{15} |\Phi_1|^2 \,dx\,dt \\
&\hspace{1cm}   + \lambda^{9} \mathbb{E} \iint_Q \theta^2 \gamma^{9} |\Phi_2|^2 \,dx\,dt \bigg],
\end{aligned}
\end{align}
where \(h_k = \displaystyle\sum_{i=1}^m \alpha_i \psi_k^i\), \; \(k = 1,2\).
\end{thm}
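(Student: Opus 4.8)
The plan is to follow, with only minor modifications, the proof of Theorem \ref{lem4.5st}, using the Carleman estimate \eqref{ineq5.1sec} of Lemma \ref{thmm5.2sec} in place of \eqref{ineq5.1}, and bearing in mind the two structural differences between \eqref{ADJSO1sec} and \eqref{ADJSO1}: in \eqref{ADJSO1sec} the term $\sum_{i=1}^m\alpha_i\psi_1^i\chi_{\mathcal O_{i,d}}$ is absent from the first equation, and the coupling in the third equation is $\frac1{\beta_i}\phi_1\chi_{G_i}$ rather than $\frac1{\beta_i}\rho_*^{-2}\phi_1\chi_{G_i}$. Both differences only simplify the argument.

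\textbf{Step 1 (localization in time).} I would fix the cut-off $\kappa\in C^1([0,T])$ from \eqref{kappadef}, set $(\widetilde\phi_1,\widetilde\phi_2;\widetilde\Phi_1,\widetilde\Phi_2)=\kappa(\phi_1,\phi_2;\Phi_1,\Phi_2)$, and write the backward system it solves: it is \eqref{equaphitilde} with the term $\kappa h_1\chi_{\mathcal O_d}$ deleted from the first equation (since it does not occur in \eqref{ADJSO1sec}). The classical energy estimate for this system, together with $\kappa\equiv1$ on $[0,T/2]$, $\operatorname{supp}\kappa\subset[0,3T/4]$ and $|\kappa'|\le C/T$, gives, exactly as in the passage \eqref{3.46inee}--\eqref{estimmae3.56} and after adding $\overline I_{0,\frac T2}(3,h_1)+\overline I_{0,\frac T2}(3,h_2)$ artificially to both sides,
\begin{align*}
&\mathbb E|\phi_1(0)|^2_{L^2(G)}+\mathbb E|\phi_2(0)|^2_{L^2(G)}+\overline I_{0,\frac T2}(5,\phi_1)+\overline I_{0,\frac T2}(5,\phi_2)+\overline I_{0,\frac T2}(3,h_1)+\overline I_{0,\frac T2}(3,h_2)\\
&\le C\Big[\overline I_{0,\frac T2}(3,h_1)+\overline I_{0,\frac T2}(3,h_2)+\mathbb E\int_{T/2}^{3T/4}\!\int_G\big(|\phi_1|^2+|\phi_2|^2+|h_1|^2+|h_2|^2\big)\,dx\,dt\Big],
\end{align*}
using that $\overline\theta,\overline\gamma$ are bounded above and below by positive constants on $[0,T/2]$, so the weights may be inserted or removed freely there.

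\textbf{Step 2 (passing to the full interval).} Since $\theta=\overline\theta$ and $\gamma=\overline\gamma$ on $(T/2,T)$, I add $\overline I_{\frac T2,T}(\cdot,\cdot)\le I(\cdot,\cdot)$ for each of $\phi_1,\phi_2,h_1,h_2$ and estimate the $[T/2,3T/4]$ integral by $C[I(5,\phi_1)+I(5,\phi_2)+I(3,h_1)+I(3,h_2)]$ (the weights $\theta^2\gamma^d$ being $\ge c>0$ on $[T/2,3T/4]$). This produces the analogue of \eqref{ineqq3.58},
\begin{align*}
&\mathbb E|\phi_1(0)|^2_{L^2(G)}+\mathbb E|\phi_2(0)|^2_{L^2(G)}+\overline I_{0,T}(5,\phi_1)+\overline I_{0,T}(5,\phi_2)+\overline I_{0,T}(3,h_1)+\overline I_{0,T}(3,h_2)\\
&\le C\Big[\overline I_{0,\frac T2}(3,h_1)+\overline I_{0,\frac T2}(3,h_2)+I(5,\phi_1)+I(5,\phi_2)+I(3,h_1)+I(3,h_2)\Big].
\end{align*}
Then I would remove the two terms $\overline I_{0,\frac T2}(3,h_k)$: since $(h_1,h_2)$ solves a forward system with zero initial datum and source $\sum_i\frac{\alpha_i}{\beta_i}\phi_1\chi_{G_i}$ in the first equation, the classical energy estimate on $[0,T/2]$ yields
\[
\mathbb E\int_0^{T/2}\!\int_G\big(|h_1|^2+|h_2|^2+|\nabla h_1|^2+|\nabla h_2|^2\big)\,dx\,dt\le C\sum_{i=1}^m\frac{\alpha_i^2}{\beta_i^2}\,\mathbb E\int_0^{T/2}\!\int_G|\phi_1|^2\,dx\,dt.
\]
On $[0,T/2]$ the weights $\overline\theta^2\overline\gamma^d$ are comparable to constants, so the left-hand side dominates $\overline I_{0,\frac T2}(3,h_1)+\overline I_{0,\frac T2}(3,h_2)$, while $\mathbb E\int_0^{T/2}\int_G|\phi_1|^2\le C\,\overline I_{0,T}(5,\phi_1)$; hence, enlarging $\overline\beta$, one gets $C[\overline I_{0,\frac T2}(3,h_1)+\overline I_{0,\frac T2}(3,h_2)]\le\frac12\overline I_{0,T}(5,\phi_1)$, which is absorbed by the left-hand side of the Step 2 estimate. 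Combining the resulting inequality with the Carleman estimate \eqref{ineq5.1sec} gives \eqref{ineq5.1car5.1sec}.

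I do not expect a genuine obstacle: the argument is essentially bookkeeping. The only point deserving attention is the last absorption step. In \eqref{ADJSO1} the penalizing weight $\rho_*$ in the source of the $h_1$-equation was crucial — one writes $|h_k|\le C\rho_*^{-2}|\phi_1|$ and exploits $\rho_*^{-4}\le\overline\theta^2$ — whereas for \eqref{ADJSO1sec} no such weight is available; but here one never needs $\phi_1$ outside $[0,T/2]$, and on that interval $\overline\theta,\overline\gamma$ are harmless positive constants, so the absorption still goes through. One should also check that the forward–backward coupling does not cause circularity in the energy estimates, which it does not, since $(h_1,h_2)$ is a forward system with homogeneous initial datum driven by $\phi_1$, and only its restriction to $[0,T/2]$ — hence only $\phi_1|_{[0,T/2]}$, already controlled by the left-hand side of the target inequality — is used.
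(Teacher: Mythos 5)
Your argument is correct and follows essentially the same route the paper intends (the paper omits this proof, simply invoking the proof of Theorem \ref{lem4.5st}): time cut-off, energy estimates, absorption of $\overline{I}_{0,T/2}(3,h_k)$ for large $\beta_i$, then combination with the Carleman estimate \eqref{ineq5.1sec}. Your key adaptation — restricting the energy estimate for $(h_1,h_2)$ to $[0,T/2]$, where only $\phi_1|_{[0,T/2]}$ enters and the weights $\overline{\theta}^2\overline{\gamma}^d$ are bounded above and below, in place of the paper's use of $\rho_*^{-4}\le\overline{\theta}^2$ — is exactly what is needed for \eqref{ADJSO1sec}, the resulting ($\lambda$-dependent) constants being consistent with the paper's conventions since $\lambda$ is ultimately fixed at $\overline{\lambda}$.
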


\section{Main controllability results}\label{section4} 
This section is devoted to proving our main null controllability results stated in Theorems \ref{th4.1SN} and \ref{th4.1SNsec}.
\subsection{Observability inequalities}
Here, we first prove the following observability inequality for the system \eqref{ADJSO1}.
\begin{prop}\label{Pro4.2}
Assume that \eqref{Assump10} and \eqref{assump1.3} hold. For  $\lambda=\overline{\lambda}$ given in Theorem \ref{lem4.5st}, there exists a constant \( C > 0 \), such that, for any \( \phi_k^T \in L^2_{\mathcal{F}_T}(\Omega; L^2(G)) \), \( k = 1,2 \), the associated solution of \eqref{ADJSO1} satisfies
\begin{align}\label{observaineq}
\begin{aligned}
&\,\mathbb{E}|\phi_1(0)|^2_{L^2(G)} + \mathbb{E}|\phi_2(0)|^2_{L^2(G)} + \sum_{i=1}^m \sum_{j=1}^2 \mathbb{E} \iint_Q  |\psi^i_j|^2 \,dx\,dt \\
&\leq C \mathbb{E} \iint_{Q} (\chi_{G_0} |\phi_1|^2 + |\Phi_1|^2 + |\Phi_2|^2) \,dx\,dt.
\end{aligned}
\end{align}
\end{prop}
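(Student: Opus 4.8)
The plan is to obtain \eqref{observaineq} as a direct consequence of the improved Carleman estimate \eqref{improvedCarl} in Theorem \ref{lem4.5st}, now reading it with the parameter $\lambda$ frozen at the admissible value $\overline{\lambda}$. First I would observe that once $\lambda=\overline{\lambda}$ is fixed, the weight $\theta=e^{\lambda\alpha}$ decays exponentially as $t\to 0^+$ and $t\to T^-$ (because $\alpha\to-\infty$ there, at a rate that is polynomial in $\gamma$), whereas $\gamma$ only blows up polynomially; hence each of the products $\theta^2\gamma^{29}$, $\theta^2\gamma^{15}$ and $\theta^2\gamma^{9}$ is bounded on $\overline{Q}$ by a constant $C=C(\overline{\lambda},\overline{\mu},G,T)$. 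Therefore the right-hand side of \eqref{improvedCarl} is dominated by $C\,\mathbb{E}\iint_Q\big(\chi_{G_0}|\phi_1|^2+|\Phi_1|^2+|\Phi_2|^2\big)\,dx\,dt$, and \eqref{improvedCarl} already yields
\[
\mathbb{E}|\phi_1(0)|^2_{L^2(G)}+\mathbb{E}|\phi_2(0)|^2_{L^2(G)}+\overline{I}_{0,T}(5,\phi_1)+\overline{I}_{0,T}(5,\phi_2)+\overline{I}_{0,T}(3,h_1)+\overline{I}_{0,T}(3,h_2)\le C\,\mathbb{E}\iint_Q\big(\chi_{G_0}|\phi_1|^2+|\Phi_1|^2+|\Phi_2|^2\big)\,dx\,dt .
\]

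It then remains to reinstate the terms $\sum_{i=1}^m\sum_{j=1}^2\mathbb{E}\iint_Q|\psi_j^i|^2\,dx\,dt$ on the left-hand side. For this I would not invoke a Carleman estimate --- the quantity appearing in \eqref{improvedCarl} is only the combination $h_k=\sum_i\alpha_i\psi_k^i$ --- but rather the classical energy estimate for the forward stochastic parabolic system satisfied by each pair $(\psi_1^i,\psi_2^i)$ in \eqref{ADJSO1}: zero initial data, homogeneous Dirichlet boundary conditions, and source term $\tfrac1{\beta_i}\rho_*^{-2}\phi_1\chi_{G_i}$ in the first equation. This gives, for every $i=1,\dots,m$,
\[
\mathbb{E}\iint_Q\big(|\psi_1^i|^2+|\psi_2^i|^2\big)\,dx\,dt\;\le\;C\,\mathbb{E}\iint_Q\rho_*^{-4}|\phi_1|^2\,dx\,dt\;\le\;C\,\mathbb{E}\iint_Q\overline{\theta}^2|\phi_1|^2\,dx\,dt\;\le\;C\,\overline{I}_{0,T}(5,\phi_1),
\]
where I used $\beta_i\ge 1$, the bound $\rho_*^{-4}\le\overline{\theta}^2$ already exploited in \eqref{inequl3.59}, and $\overline{\gamma}\ge C>0$. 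Summing over $i$ and adding the result to the previous display produces exactly \eqref{observaineq}.

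I do not expect a genuine obstacle in this argument: the analytical difficulty has been absorbed into Theorem \ref{lem4.5st}, and what is left is the routine passage from a weighted Carleman inequality to an unweighted observability inequality. The only two points needing a line of justification are the uniform boundedness of $\theta^2\gamma^{d}$ on $\overline{Q}$ after fixing $\lambda$, and the control of the individual followers' adjoint states $\psi_j^i$ --- obtained directly by an energy estimate against the weighted integral of $\phi_1$, rather than through the combined quantities $h_1,h_2$. Note that the largeness of the parameters $\beta_i$ plays no role at this final stage; it is needed only earlier, to close the Carleman estimate \eqref{improvedCarl} itself.
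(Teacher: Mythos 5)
Your proposal is correct and follows essentially the same route as the paper: the paper also deduces \eqref{observaineq} by combining the improved Carleman estimate \eqref{improvedCarl} with an energy bound $\sum_{i,j}\mathbb{E}\iint_Q|\psi^i_j|^2\,dx\,dt\le C\,\mathbb{E}\iint_Q\rho_*^{-4}|\phi_1|^2\,dx\,dt\le C\,\overline{I}_{0,T}(5,\phi_1)$ for the follower states, obtained there via It\^o's formula, Young's inequality and Gronwall's lemma, which is exactly the ``classical energy estimate'' you invoke. Your only addition is to spell out the (standard, and correct) boundedness of $\theta^2\gamma^{d}$ once $\lambda=\overline{\lambda}$ is fixed, a step the paper leaves implicit when passing from the weighted right-hand side to the unweighted one.
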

\begin{proof}
Using Itô's formula for \( d(|\psi^i_1|^2+|\psi^i_2|^2) \), \( i = 1, 2, \dots, m \), we get for any \( t \in (0, T) \),
\begin{align}\label{inequa4.2}
\begin{aligned}
\mathbb{E} \int_0^t \int_G d(|\psi^i_1|^2+|\psi^i_2|^2) \, dx =& - 2 \mathbb{E} \int_0^t \int_G |\nabla\psi_1^i|^2 \, dx \, ds- 2 \mathbb{E} \int_0^t \int_G |\nabla\psi_2^i|^2 \, dx \, ds \\
&+ 2 \mathbb{E} \int_0^t \int_G a_{11} |\psi^i_1|^2 \, dx \, ds +2\mathbb{E} \int_0^t \int_G  a_{12}\psi^i_1\psi^i_2 \, dx \, ds  \\
&+ \frac{2}{\beta_i} \mathbb{E} \int_0^t \int_G \rho^{-2}_* \psi^i_1 \phi_1 \chi_{G_i} \, dx \, ds+2\mathbb{E} \int_0^t \int_G  a_{21}\psi^i_1\psi^i_2 \, dx \, ds \\
&+ 2 \mathbb{E} \int_0^t \int_G a_{22} |\psi^i_2|^2 \, dx \, ds.
\end{aligned}
\end{align}
By applying Young's inequality to the right-hand side of \eqref{inequa4.2}, we obtain that
\begin{align*}
\mathbb{E} \int_0^t \int_G d(|\psi^i_1|^2+|\psi^i_2|^2) \, dx &\leq C \left[ \mathbb{E} \iint_Q \rho^{-4}_* |\phi_1|^2 \,dx\,dt+\mathbb{E} \int_0^t \int_G (|\psi^i_1|^2+|\psi^i_2|^2) \, dx \, ds  \right], \quad i = 1, 2, \dots, m.
\end{align*}
Recalling that \( \psi^i_1(0)=\psi^i_2(0) = 0 \), for \( i = 1, 2, \dots, m \), and using Gronwall’s inequality, it follows that
\begin{align*}
\mathbb{E} \iint_Q  (|\psi^i_1|^2+|\psi^i_2|^2) \,dx\,dt \leq C \mathbb{E} \iint_Q \rho^{-4}_* |\phi_1|^2 \,dx\,dt, \quad i = 1, 2, \dots, m,
\end{align*}
which leads to
\begin{align}\label{estimm4.5}
\sum_{i=1}^m \sum_{j=1}^2 \mathbb{E} \iint_Q |\psi^i_j|^2 \,dx\,dt \leq C \mathbb{E} \iint_Q \overline{\theta}^2 |\phi_1|^2 \,dx\,dt.
\end{align}
This implies that
\begin{align}\label{estimm4.5secn}
 \sum_{i=1}^m \sum_{j=1}^2\mathbb{E} \iint_Q  |\psi^i_j|^2 \,dx\,dt \leq C \overline{I}_{0,T}(5, \phi_1).
\end{align}
Finally, combining \eqref{estimm4.5secn} and  \eqref{improvedCarl}, we deduce the desired observability inequality \eqref{observaineq}.
\end{proof}

We now prove the following observability inequality for the system \eqref{ADJSO1sec}.
\begin{prop}\label{Pro4.2sec}
Assume that \eqref{Assump10} and \eqref{assump1.3} hold. For  $\lambda=\overline{\lambda}$ given in Theorem \ref{lth5.1sec}, there exists a constant \(C > 0\) and a positive weight function \(\mathscr{\rho} = \rho(t)\) blowing up as \(t \rightarrow T\), such that, for any \(\phi_k^T \in L^2_{\mathcal{F}_T}(\Omega; L^2(G))\), \(k=1,2\), the associated solution of \eqref{ADJSO1sec} satisfies that
\begin{align}\label{observaineqsec}
\begin{aligned}
&\,\mathbb{E}|\phi_1(0)|^2_{L^2(G)}+\mathbb{E}|\phi_2(0)|^2_{L^2(G)}+ \sum_{i=1}^m \sum_{j=1}^2 \mathbb{E}\iint_Q \rho^{-2}|\psi^i_j|^2 \, dx\, dt \\
&\leq C \mathbb{E} \iint_{Q} (\chi_{G_0} |\phi_1|^2 + |\Phi_1|^2 + |\Phi_2|^2) \,dx\,dt.
\end{aligned}
\end{align}
\end{prop}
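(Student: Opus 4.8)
The argument parallels that of Proposition~\ref{Pro4.2}, with one essential modification. In the system \eqref{ADJSO1sec} the process $\psi^i_1$ is forced by $\frac{1}{\beta_i}\phi_1\chi_{G_i}$, which --- unlike in \eqref{ADJSO1} --- no longer carries the damping factor $\rho_*^{-2}$. A plain energy estimate would therefore only yield $\mathbb{E}\iint_Q|\psi^i_j|^2\le C\,\mathbb{E}\iint_Q|\phi_1|^2$, and the right-hand side is \emph{not} controlled by the improved Carleman estimate \eqref{ineq5.1car5.1sec}, which bounds $\phi_1$ on $Q$ only through the weight $\overline{\theta}$, degenerating as $t\to T$. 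This is exactly why a weight $\rho$ blowing up at $t=T$ must be inserted in front of $\psi^i_j$.

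First I would choose the weight. Let $\overline{\alpha}_{\min}(t)=\min_{x\in\overline{G}}\overline{\alpha}(t,x)$ and set $\rho(t)=e^{-\lambda\overline{\alpha}_{\min}(t)}$. Since $\overline{\alpha}_{\min}(t)=\big(1-e^{2\overline{\mu}|\eta_0|_\infty}\big)\overline{\gamma}(t)$ with $\overline{\gamma}=1/\ell$ constant on $[0,T/2]$ and $\overline{\gamma}(t)\to+\infty$ as $t\to T$, the function $\rho$ is positive, constant near $t=0$ and blows up as $t\to T$, as required. Moreover, by construction $\rho^{-1}(t)=\min_{x\in\overline{G}}\overline{\theta}(t,x)$, hence
\[
\rho^{-1}(t)\le \overline{\theta}(t,x)\qquad\text{for all }(t,x)\in Q .
\]

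Next I would establish a weighted energy estimate for $\psi^i=(\psi^i_1,\psi^i_2)$. Recalling that $\psi^i$ solves a forward parabolic system with no diffusion term and zero initial datum, applying Itô's formula to $d\big(\rho^{-2}(|\psi^i_1|^2+|\psi^i_2|^2)\big)$, integrating over $G$, and using that $\rho^{-2}$ is nonincreasing in $t$ (so the weight-derivative term has the favorable sign), that the gradient terms are good, that the zeroth-order terms with $a_{k\ell}$ are bounded by $C\rho^{-2}(|\psi^i_1|^2+|\psi^i_2|^2)$, and that by Young's inequality $\frac{2}{\beta_i}\rho^{-2}\psi^i_1\phi_1\chi_{G_i}\le \rho^{-2}|\psi^i_1|^2+\frac{1}{\beta_i^2}\rho^{-2}|\phi_1|^2\chi_{G_i}\le \rho^{-2}|\psi^i_1|^2+\overline{\theta}^2|\phi_1|^2\chi_{G_i}$ (thanks to $\rho^{-2}\le\overline{\theta}^2$ and $\beta_i\ge1$), Gronwall's inequality gives
\[
\mathbb{E}\iint_Q\rho^{-2}\big(|\psi^i_1|^2+|\psi^i_2|^2\big)\,dx\,dt \le C\,\mathbb{E}\iint_Q\overline{\theta}^2|\phi_1|^2\,dx\,dt,\qquad i=1,\dots,m .
\]
Summing in $i$ and using $\overline{\gamma}\ge C$ to absorb $\overline{\theta}^2$ into $\overline{I}_{0,T}(5,\phi_1)$ yields $\sum_{i=1}^m\sum_{j=1}^2\mathbb{E}\iint_Q\rho^{-2}|\psi^i_j|^2\,dx\,dt\le C\,\overline{I}_{0,T}(5,\phi_1)$.

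Finally, for the fixed parameter $\lambda=\overline{\lambda}$ the quantities $\lambda^{29}\theta^2\gamma^{29}$, $\lambda^{15}\theta^2\gamma^{15}$ and $\lambda^{9}\theta^2\gamma^{9}$ are bounded on $Q$, so the right-hand side of \eqref{ineq5.1car5.1sec} is at most $C\,\mathbb{E}\iint_Q\big(\chi_{G_0}|\phi_1|^2+|\Phi_1|^2+|\Phi_2|^2\big)$, while its left-hand side dominates $\mathbb{E}|\phi_1(0)|^2_{L^2(G)}+\mathbb{E}|\phi_2(0)|^2_{L^2(G)}+\overline{I}_{0,T}(5,\phi_1)$. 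Combining this with the weighted energy estimate of the previous step gives \eqref{observaineqsec}. The only delicate point is that construction: the weight $\rho$ must be monotone and purely time dependent so that no $\nabla\rho$ terms spoil the energy identity, yet its reciprocal must stay below $\overline{\theta}$ so that the forcing $\phi_1$ is controlled by precisely the quantity the improved Carleman estimate provides --- the inequality $\rho^{-1}\le\overline{\theta}$ is what reconciles the two requirements. All remaining computations are routine and mirror Proposition~\ref{Pro4.2}.
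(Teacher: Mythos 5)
Your proposal is correct and follows essentially the same route as the paper: the same weight $\rho(t)=e^{-\overline{\lambda}\,\overline{\alpha}^*(t)}$ with $\overline{\alpha}^*(t)=\min_{x\in\overline{G}}\overline{\alpha}(t,x)$, the same Itô computation for $d\big(\rho^{-2}(|\psi^i_1|^2+|\psi^i_2|^2)\big)$ with Young and Gronwall yielding $\sum_{i,j}\mathbb{E}\iint_Q\rho^{-2}|\psi^i_j|^2\,dx\,dt\le C\,\overline{I}_{0,T}(5,\phi_1)$, and the same final combination with the improved Carleman estimate \eqref{ineq5.1car5.1sec} after bounding the weights for fixed $\lambda=\overline{\lambda}$. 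The only cosmetic difference is that you drop the weight-derivative term by its sign (monotonicity of $\rho^{-2}$), whereas the paper absorbs it via a bound of $|\rho'\rho^{-3}|$ and Gronwall; both handle that term adequately.
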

\begin{proof}
Define the function \(\rho(t) := e^{-\overline{\lambda} \overline{\alpha}^*(t)}\) where \(\overline{\alpha}^*(t) = \min_{x \in \overline{G}} \overline{\alpha}(t, x)\).
By Itô's formula, we compute \(d(\rho^{-2} |\psi^i_1|^2 + \rho^{-2} |\psi^i_2|^2)\), \(i = 1, 2, \dots, m\), then we conclude that for any \( t \in (0, T) \),
\begin{align}\label{inequa4.2sec}
\begin{aligned}
&\mathbb{E} \int_0^t \int_G d(\rho^{-2} |\psi^i_1|^2 + \rho^{-2} |\psi^i_2|^2) \, dx \\
&= -2\mathbb{E} \int_0^t \int_G \rho' \rho^{-3} (|\psi^i_1|^2 + |\psi^i_2|^2) \, dx \, ds - 2 \mathbb{E} \int_0^t \int_G \rho^{-2} (|\nabla\psi_1^i|^2 + |\nabla\psi_2^i|^2) \, dx \, ds \\
&\quad + 2 \mathbb{E} \int_0^t \int_G \rho^{-2} a_{11} |\psi^i_1|^2 \, dx \, ds + 2 \mathbb{E} \int_0^t \int_G \rho^{-2} a_{12} \psi^i_1 \psi^i_2 \, dx \, ds \\
&\quad + \frac{2}{\beta_i} \mathbb{E} \int_0^t \int_G \rho^{-2} \psi^i_1 \phi_1 \chi_{G_i} \, dx \, ds + 2 \mathbb{E} \int_0^t \int_G \rho^{-2} a_{21} \psi^i_1 \psi^i_2 \, dx \, ds \\
&\quad + 2 \mathbb{E} \int_0^t \int_G \rho^{-2} a_{22} |\psi^i_2|^2 \, dx \, ds.
\end{aligned}
\end{align}
Note that \( |\rho' \rho^{-3}| \leq C \rho^{-2} \), and applying Young's inequality to the right-hand side of \eqref{inequa4.2sec}, we obtain that for any \( i = 1, 2, \dots, m \),
\begin{align*}
\mathbb{E} \int_0^t \int_G d(\rho^{-2} |\psi^i_1|^2 + \rho^{-2} |\psi^i_1|^2) \, dx &\leq C\left[\mathbb{E} \iint_Q \rho^{-2} |\phi_1|^2 \,dx\,dt + \mathbb{E} \iint_Q \rho^{-2} (|\psi^i_1|^2 + |\psi^i_2|^2) \,dx\,dt \right].
\end{align*}
By Gronwall’s inequality, we conclude that
\begin{align}\label{ineqwithrhosec}
\mathbb{E} \iint_Q \rho^{-2} (|\psi^i_1|^2 + |\psi^i_2|^2) \,dx\,dt \leq C \mathbb{E} \iint_Q \rho^{-2} |\phi_1|^2 \,dx\,dt, \quad i = 1, 2, \dots, m,
\end{align}
which implies that
\begin{align}\label{estimm4.5sec}
\sum_{i=1}^m \sum_{j=1}^2 \mathbb{E} \iint_Q \rho^{-2} |\psi^i_j|^2 \,dx\,dt \leq C \mathbb{E} \iint_Q \rho^{-2} |\phi_1|^2 \,dx\,dt \leq C \overline{I}_{0,T}(5, \phi_1).
\end{align}
Combining \eqref{estimm4.5sec} and  \eqref{ineq5.1car5.1sec}, we deduce the observability inequality \eqref{observaineqsec}.
\end{proof}

\subsection{Proof of Theorems \ref{th4.1SN} and \ref{th4.1SNsec}}

Let us first establish the following null controllability result for the system \eqref{eqq4.7}, and then deduce the proof of Theorem \ref{th4.1SN}.
\begin{prop}\label{Lm5.5}
Assume that \eqref{Assump10} and \eqref{assump1.3} hold. Then, for any target functions \( (y^i_{1,d}, y^i_{2,d}) \in \mathscr{H}_{i,d} \) (\( i = 1, 2, \dots, m \)), and  any initial states \(y^0_1, y^0_2 \in L^2_{\mathcal{F}_0}(\Omega; L^2(G))\), there exist controls 
\((\widehat{u}_1, \widehat{u}_2, \widehat{u}_3) \in \mathscr{U}\), minimizing the functional \(J\), such that the corresponding solution of \eqref{eqq4.7} satisfies
\[
\widehat{y}_1(T,\cdot) = \widehat{y}_2(T,\cdot) = 0 \quad \textnormal{in} \; G, \quad \mathbb{P}\textnormal{-a.s.}
\]
Furthermore, there exists a constant $C>0$ such that
\begin{align}\label{eq54NScont2}
\begin{aligned}
&\, |\widehat{u}_1|^2_{L^2_\mathcal{F}(0,T; L^2(G_0))} + |\widehat{u}_2|^2_{L^2_\mathcal{F}(0,T; L^2(G))} + |\widehat{u}_3|^2_{L^2_\mathcal{F}(0,T; L^2(G))} \\
& \leq C \bigg( \mathbb{E} |y^0_1|^2_{L^2(G)} + \mathbb{E} |y^0_2|^2_{L^2(G)} + \sum_{i=1}^m \sum_{j=1}^2 \mathbb{E} \iint_{(0,T) \times \mathcal{O}_d}  |y^i_{j,d}|^2 \,dx\,dt \bigg).
\end{aligned}
\end{align}
\end{prop}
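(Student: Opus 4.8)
The plan is to establish Proposition~\ref{Lm5.5} by the classical duality (penalized HUM) argument, with the observability inequality \eqref{observaineq} of Proposition~\ref{Pro4.2} as the sole quantitative input. The first step is to record the It\^o duality identity linking the controlled system \eqref{eqq4.7} with its adjoint \eqref{ADJSO1}: applying It\^o's formula to $\sum_{k=1}^2\big(y_k\phi_k-\sum_{i=1}^m z_k^i\psi_k^i\big)$ and integrating by parts, every coupling cross-term cancels (those generated by the feedback $-\sum_i\tfrac1{\beta_i}\rho_*^{-2}z_1^i\chi_{G_i}$ against the source $\tfrac1{\beta_i}\rho_*^{-2}\phi_1\chi_{G_i}$, and those generated by the $\alpha_i(y_j-y^i_{j,d})\chi_{\mathcal O_{i,d}}$ terms against the $\alpha_i\psi_j^i\chi_{\mathcal O_{i,d}}$ terms), leaving
\begin{align*}
\mathbb{E}\int_G\big(\phi_1^T\widehat y_1(T)+\phi_2^T\widehat y_2(T)\big)\,dx
&=\mathbb{E}\int_G\big(\phi_1(0)y_1^0+\phi_2(0)y_2^0\big)\,dx\\
&\quad+\mathbb{E}\iint_Q\big(\chi_{G_0}u_1\phi_1+u_2\Phi_1+u_3\Phi_2\big)\,dx\,dt\\
&\quad-\sum_{i=1}^m\alpha_i\sum_{j=1}^2\mathbb{E}\iint_{(0,T)\times\mathcal O_d}y^i_{j,d}\,\psi_j^i\,dx\,dt,
\end{align*}
valid for all $(u_1,u_2,u_3)\in\mathscr U$, all $(\phi_1^T,\phi_2^T)\in L^2_{\mathcal F_T}(\Omega;L^2(G;\mathbb R^2))$, and the corresponding solutions. (Well-posedness of the coupled forward--backward system \eqref{eqq4.7} for fixed controls is obtained, for $\beta_i$ large, by a contraction argument, using that $\rho_*^{-2}=e^{\lambda\alpha^*}\le1$ makes the feedback small.)

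Next I would introduce on $L^2_{\mathcal F_T}(\Omega;L^2(G;\mathbb R^2))$ the quadratic functional
\begin{align*}
\mathcal J(\phi_1^T,\phi_2^T)&=\tfrac12\,\mathbb{E}\iint_Q\big(\chi_{G_0}|\phi_1|^2+|\Phi_1|^2+|\Phi_2|^2\big)\,dx\,dt\\
&\quad+\mathbb{E}\int_G\big(\phi_1(0)y_1^0+\phi_2(0)y_2^0\big)\,dx-\sum_{i=1}^m\alpha_i\sum_{j=1}^2\mathbb{E}\iint_{(0,T)\times\mathcal O_d}y^i_{j,d}\,\psi_j^i\,dx\,dt,
\end{align*}
where $(\phi_1,\phi_2,\psi_j^i,\Phi_1,\Phi_2)$ solves \eqref{ADJSO1} with terminal data $(\phi_1^T,\phi_2^T)$. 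By the observability inequality \eqref{observaineq}, the quantity $\|(\phi_1^T,\phi_2^T)\|:=\big(\mathbb{E}\iint_Q(\chi_{G_0}|\phi_1|^2+|\Phi_1|^2+|\Phi_2|^2)\,dx\,dt\big)^{1/2}$ is a norm, and the linear part of $\mathcal J$ is controlled by $C\big(\mathbb E|y_1^0|^2_{L^2(G)}+\mathbb E|y_2^0|^2_{L^2(G)}+\sum_{i,j}\mathbb E\iint_{(0,T)\times\mathcal O_d}|y^i_{j,d}|^2\,dx\,dt\big)^{1/2}\|(\phi_1^T,\phi_2^T)\|$; hence $\mathcal J$ is continuous, strictly convex and coercive on the Hilbert space $\mathcal X$ obtained by completing $L^2_{\mathcal F_T}(\Omega;L^2(G;\mathbb R^2))$ for $\|\cdot\|$, and it admits a unique minimizer $(\widehat\phi_1^T,\widehat\phi_2^T)\in\mathcal X$.

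Then I would set $\widehat u_1=\widehat\phi_1\chi_{G_0}$, $\widehat u_2=\widehat\Phi_1$, $\widehat u_3=\widehat\Phi_2$ and let $(\widehat y_1,\widehat y_2,\widehat z_1^i,\widehat z_2^i)$ be the solution of \eqref{eqq4.7} driven by $(\widehat u_1,\widehat u_2,\widehat u_3)$. The Euler--Lagrange relation $\langle\mathcal J'(\widehat\phi_1^T,\widehat\phi_2^T),(\delta\phi_1^T,\delta\phi_2^T)\rangle=0$, combined with the duality identity applied with the controls $\widehat u$ and the test datum $(\delta\phi_1^T,\delta\phi_2^T)$, gives $\mathbb{E}\int_G(\delta\phi_1^T\widehat y_1(T)+\delta\phi_2^T\widehat y_2(T))\,dx=0$ for every test datum, hence $\widehat y_1(T,\cdot)=\widehat y_2(T,\cdot)=0$ in $G$, $\mathbb P$-a.s. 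The estimate \eqref{eq54NScont2} then follows from $\mathcal J(\widehat\phi^T)\le\mathcal J(0)=0$: inserting the bound on the linear part yields $\tfrac12\|\widehat\phi^T\|^2\le C(\text{data})^{1/2}\|\widehat\phi^T\|$, i.e.\ $|\widehat u_1|^2_{L^2_\mathcal F(0,T;L^2(G_0))}+|\widehat u_2|^2_{L^2_\mathcal F(0,T;L^2(G))}+|\widehat u_3|^2_{L^2_\mathcal F(0,T;L^2(G))}=\|\widehat\phi^T\|^2\le C(\text{data})$. Finally, since $J=\tfrac12|(\cdot)|_{\mathscr U}^2$, a standard orthogonality argument shows that this HUM control has minimal $\mathscr U$-norm among all null controls, hence minimizes $J$. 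The main obstacle is the genuine forward--backward coupling of \eqref{eqq4.7}: one must handle its well-posedness (needing $\beta_i$ large) and, above all, carry out the It\^o computation so that the coupling cross-terms cancel exactly --- this cancellation is precisely what dictates the form of \eqref{ADJSO1} and explains why the $\psi_j^i$-equations and the sources $\sum_i\alpha_i\psi_j^i\chi_{\mathcal O_{i,d}}$ appear there.
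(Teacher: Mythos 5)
Your overall strategy is sound and leads to the same goal as the paper, but the implementation differs: the paper does not minimize over a completion of the terminal-data space. Instead it introduces, for each $\varepsilon>0$, the penalized functional $J_\varepsilon(\phi_1^T,\phi_2^T)$ (the same quadratic and linear parts as yours plus the term $\varepsilon|(\phi_1^T,\phi_2^T)|_{L^2_{\mathcal F_T}(\Omega;L^2(G;\mathbb R^2))}$), minimizes it on $L^2_{\mathcal F_T}(\Omega;L^2(G;\mathbb R^2))$ itself, reads off from the Euler--Lagrange condition (with the degenerate case $(\phi^T_{1,\varepsilon},\phi^T_{2,\varepsilon})=(0,0)$ treated via the one-sided derivative condition) that the controls $(u_1^\varepsilon,u_2^\varepsilon,u_3^\varepsilon)=(\chi_{G_0}\phi_1^\varepsilon,\Phi_1^\varepsilon,\Phi_2^\varepsilon)$ drive the state to $|(y_1^\varepsilon(T),y_2^\varepsilon(T))|\le\varepsilon$ with a uniform bound given by the observability constant, and then passes to the weak limit $\varepsilon\to0$. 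Your ``pure HUM'' variant (minimizing on the completion $\mathcal X$) is a legitimate alternative and avoids the limiting procedure, but it forces you to handle points you only gesture at: the observability quantity is a priori only a \emph{seminorm} on $L^2_{\mathcal F_T}$, so one must quotient before completing, check that the linear part vanishes on the kernel (it does, by \eqref{observaineq}), and verify that the maps $(\phi_1^T,\phi_2^T)\mapsto(\chi_{G_0}\phi_1,\Phi_1,\Phi_2)$, $(\phi_1(0),\phi_2(0))$ and $\psi^i_j$ extend continuously to $\mathcal X$ (again by \eqref{observaineq}); the paper's $\varepsilon$-penalization is designed precisely to sidestep these issues in the stochastic setting. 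Your closing remark on minimal-norm optimality of the HUM control, and on well-posedness of the forward--backward system \eqref{eqq4.7} (which in the paper follows from the Nash-equilibrium construction rather than a separate contraction argument), are reasonable additions.

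One concrete error: your duality identity has the wrong sign on the target term. Carrying out the It\^o computation for $\sum_k y_k\phi_k$ and for $\sum_k z^i_k\psi^i_k$ separately, the second pairing yields
\begin{align*}
\frac{1}{\beta_i}\,\mathbb E\iint_Q \rho_*^{-2} z_1^i\phi_1\chi_{G_i}\,dx\,dt
=\alpha_i\,\mathbb E\iint_Q\big[(y_1-y^i_{1,d})\psi_1^i+(y_2-y^i_{2,d})\psi_2^i\big]\chi_{\mathcal O_{i,d}}\,dx\,dt,
\end{align*}
so after substitution the terms $y_j\psi^i_j$ cancel but the data terms survive with a \emph{plus} sign, i.e.\ the identity is \eqref{ine6.6sec} with $+\sum_{i,j}\alpha_i\,\mathbb E\iint_Q y^i_{j,d}\psi^i_j\chi_{\mathcal O_d}$, not $-$. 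Since you flipped the sign consistently in your functional $\mathcal J$ as well, your scheme is internally coherent and the error is repaired by flipping both signs (recovering the paper's $J_\varepsilon$ up to the penalty term); but as written both your duality relation and your functional disagree with the correct computation, and with the true identity your stated $\mathcal J$ would not produce $\widehat y(T)=0$ from the Euler--Lagrange condition.
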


\begin{proof}
Applying Itô's formula for the systems \eqref{eqq4.7} and \eqref{ADJSO1}, we obtain the following duality relation
\begin{align}\label{ine6.6sec}
\begin{aligned}
&\,\mathbb{E}\int_G \left[y_1(T)\phi_1^T +y_2(T)\phi_2^T\right] dx -
\mathbb{E}\int_G \left[y^0_1\phi_1(0)+y^0_2\phi_2(0)\right] dx \\
&=\mathbb{E}\iint_{Q} (\chi_{G_0}u_1\phi_1+u_2\Phi_1+u_3\Phi_2) \, dx \,dt+\sum_{i=1}^{m}\sum_{j=1}^{2}\alpha_i \mathbb{E}\iint_Q y^i_{j,d} \psi^i_j \chi_{\mathcal{O}_d} \, dx \,dt.
\end{aligned}
\end{align}
Let \( \varepsilon > 0 \) and \( (\phi^T_1, \phi^T_2) \in L^2_{\mathcal{F}_T}(\Omega; L^2(G;\mathbb{R}^2)) \) be the terminal state of \eqref{ADJSO1}. We now define the following functional
\begin{align}\label{functJ}
\begin{aligned}
J_{\varepsilon}(\phi^T_1,\phi^T_2) &= \frac{1}{2} \mathbb{E} \iint_{Q} \left( \chi_{G_0} |\phi_1|^2 + |\Phi_1|^2 + |\Phi_2|^2 \right) dx \, dt + \varepsilon |(\phi^T_1, \phi^T_2)|_{L^2_{\mathcal{F}_T}(\Omega; L^2(G;\mathbb{R}^2))} \\
&\quad + \mathbb{E} \int_G \left[ y^0_1 \phi_1(0) + y^0_2 \phi_2(0) \right] dx + \sum_{i=1}^{m} \sum_{j=1}^2 \alpha_i \mathbb{E} \iint_Q y^i_{j,d} \psi^i_j \chi_{\mathcal{O}_d} \, dx \,dt.
\end{aligned}
\end{align}
It is straightforward to verify that \( J_{\varepsilon}: L^2_{\mathcal{F}_T}(\Omega; L^2(G;\mathbb{R}^2)) \to \mathbb{R} \) is continuous and strictly convex. Furthermore, by the observability inequality \eqref{observaineq}, the functional \( J_\varepsilon \) is coercive. Indeed, by applying Young's inequality, we have that for any \( \delta > 0 \), 
\begin{align*}
\mathbb{E} \int_G \left[ y^0_1 \phi_1(0) + y^0_2 \phi_2(0) \right] dx &\geq - \frac{\delta}{2} \left( \mathbb{E} |\phi_1(0)|^2_{L^2(G)} + \mathbb{E} |\phi_2(0)|^2_{L^2(G)} \right) - \frac{1}{2\delta} \left( \mathbb{E} |y^0_1|^2_{L^2(G)} + \mathbb{E} |y^0_2|^2_{L^2(G)} \right).
\end{align*}
Using the observability inequality \eqref{observaineq}, we deduce that
\begin{align}\label{inneq1}
\begin{aligned}
\mathbb{E} \int_G \left[ y^0_1 \phi_1(0) + y^0_2 \phi_2(0) \right] dx &\geq - \frac{\delta}{2} C \mathbb{E} \iint_{Q} \left( \chi_{G_0} |\phi_1|^2 + |\Phi_1|^2 + |\Phi_2|^2 \right) \,dx\,dt \\
&\quad + \frac{\delta}{2} \sum_{i=1}^m \sum_{j=1}^2 \mathbb{E} \iint_Q |\psi^i_j|^2 \,dx\,dt \\
&\quad - \frac{1}{2\delta} \left( \mathbb{E} |y^0_1|^2_{L^2(G)} + \mathbb{E} |y^0_2|^2_{L^2(G)} \right).
\end{aligned}
\end{align}
Similarly, we have the following estimate
\begin{align}\label{ineann2}
\begin{aligned}
\sum_{i=1}^{m} \sum_{j=1}^2 \alpha_i \mathbb{E} \iint_Q y^i_{j,d} \psi^i_j \chi_{\mathcal{O}_d} \,dx\,dt &\geq - \frac{\delta}{2} \sum_{i=1}^{m} \sum_{j=1}^2 \mathbb{E} \iint_Q  |\psi^i_j|^2 \,dx\,dt \\
&\quad - \frac{1}{2\delta} \sum_{i=1}^{m} \sum_{j=1}^2 \alpha_i^2 \mathbb{E} \iint_{(0,T) \times \mathcal{O}_d}  |y^i_{j,d}|^2 \,dx\,dt.
\end{aligned}
\end{align}
Combining \eqref{functJ}, \eqref{inneq1}, and \eqref{ineann2}, and choosing \( \delta = \frac{1}{2C} \), where \( C \) is the constant appearing in \eqref{inneq1}, we obtain that
\begin{align*}
J_{\varepsilon}(\phi^T_1, \phi^T_2) \geq &\, \varepsilon |(\phi^T_1, \phi^T_2)|_{L^2_{\mathcal{F}_T}(\Omega; L^2(G;\mathbb{R}^2))} - C \bigg( \mathbb{E} |y^0_1|^2_{L^2(G)} + \mathbb{E} |y^0_2|^2_{L^2(G)} \\
&+ \sum_{i=1}^m \sum_{j=1}^2 \alpha_i^2 \mathbb{E} \iint_{(0,T) \times \mathcal{O}_d}  |y^i_{j,d}|^2 \,dx\,dt \bigg).
\end{align*}
Hence, the functional \( J_{\varepsilon} \) is coercive in \( L^2_{\mathcal{F}_T}(\Omega; L^2(G; \mathbb{R}^2)) \). Consequently, \( J_{\varepsilon} \) admits a unique minimum \( (\phi^T_{1,\varepsilon}, \phi^T_{2,\varepsilon}) \). If \( (\phi^T_{1,\varepsilon}, \phi^T_{2,\varepsilon}) \neq (0,0) \), the first-order condition implies that
\begin{equation}\label{EulerLagraeq}
\left\langle J_\varepsilon'(\phi^T_{1,\varepsilon}, \phi^T_{2,\varepsilon}), (\phi^T_1, \phi^T_2) \right\rangle_{L^2_{\mathcal{F}_T}(\Omega; L^2(G;\mathbb{R}^2))} = 0, \quad \forall (\phi^T_1, \phi^T_2) \in L^2_{\mathcal{F}_T}(\Omega; L^2(G;\mathbb{R}^2)).
\end{equation}
Computing the derivative of \( J_\varepsilon \), the condition \eqref{EulerLagraeq} provides that
\begin{align}\label{Eq51NS}
\begin{aligned}
&\, \mathbb{E} \iint_{Q} (\chi_{G_0} \phi^{\varepsilon}_1 \phi_1+\Phi^{\varepsilon}_1 \Phi_1+\Phi^{\varepsilon}_2 \Phi_2) \,dx\,dt+ \varepsilon \frac{\left\langle (\phi^T_{1,\varepsilon}, \phi^T_{2,\varepsilon}), (\phi^T_1, \phi^T_2) \right\rangle_{L^2_{\mathcal{F}_T}(\Omega; L^2(G;\mathbb{R}^2))}}{|(\phi^T_{1,\varepsilon}, \phi^T_{2,\varepsilon})|_{L^2_{\mathcal{F}_T}(\Omega; L^2(G;\mathbb{R}^2))}} \\
& + \mathbb{E} \int_G \left[ y^0_1 \phi_1(0) + y^0_2 \phi_2(0) \right] dx + \sum_{i=1}^{m} \sum_{j=1}^2 \alpha_i \mathbb{E} \iint_Q y^i_{j,d} \psi^i_j \chi_{\mathcal{O}_d} \,dx\,dt = 0,
\end{aligned}
\end{align}
where $((\phi^\varepsilon_1,\phi^\varepsilon_2;\Phi^\varepsilon_1,\Phi^\varepsilon_2);\psi^{i,\varepsilon}_1,\psi^{i,\varepsilon}_2)$ is the solution of \eqref{ADJSO1} corresponding to the terminal state $(\phi^T_{1,\varepsilon}, \phi^T_{2,\varepsilon})$.\\
Choosing the controls \( (u^{\varepsilon}_1, u^{\varepsilon}_2, u^{\varepsilon}_3) = (\chi_{G_0} \phi^{\varepsilon}_1, \Phi^{\varepsilon}_1, \Phi^{\varepsilon}_2) \) in the identity \eqref{ine6.6sec} and combining the resulting equality with \eqref{Eq51NS}, we obtain the following equation for all \( (\phi^T_1, \phi^T_2) \in L^2_{\mathcal{F}_T}(\Omega; L^2(G;\mathbb{R}^2)) \):
\begin{equation*}
\varepsilon \frac{\left\langle (\phi^T_{1,\varepsilon}, \phi^T_{2,\varepsilon}), (\phi^T_1, \phi^T_2) \right\rangle_{L^2_{\mathcal{F}_T}(\Omega; L^2(G;\mathbb{R}^2))}}{|(\phi^T_{1,\varepsilon}, \phi^T_{2,\varepsilon})|_{L^2_{\mathcal{F}_T}(\Omega; L^2(G;\mathbb{R}^2))}} + \mathbb{E} \iint_Q \left[ y_{1}^\varepsilon(T) \phi^T_1 + y_{2}^\varepsilon(T) \phi^T_2 \right]\,dx\,dt = 0,
\end{equation*}
which immediately implies that
\begin{equation}\label{eq53NS}
\left| (y_{1}^\varepsilon(T), y_{2}^\varepsilon(T)) \right|_{L^2_{\mathcal{F}_T}(\Omega; L^2(G;\mathbb{R}^2))} \leq \varepsilon,
\end{equation}
where $(y_1^\varepsilon,y_2^\varepsilon;(z^{i,\varepsilon}_1,z^{i,\varepsilon}_2;Z^{i,\varepsilon}_1,Z^{i,\varepsilon}_2))$ is the solution of \eqref{eqq4.7} corresponding to the controls $(u^{\varepsilon}_1, u^{\varepsilon}_2, u^{\varepsilon}_3)$.\\
By taking the terminal state \( (\phi^T_{1}, \phi^T_{2}) = (\phi^T_{1,\varepsilon}, \phi^T_{2,\varepsilon}) \) in \eqref{Eq51NS} and using the observability inequality \eqref{observaineq} together with Young's inequality, we deduce that
\begin{align}\label{eq54NS}
\begin{aligned}
&\, |u^{\varepsilon}_1|^2_{L^2_\mathcal{F}(0,T; L^2(G_0))} + |u^{\varepsilon}_2|^2_{L^2_\mathcal{F}(0,T; L^2(G))} + |u^{\varepsilon}_3|^2_{L^2_\mathcal{F}(0,T; L^2(G))} \\
& \leq C \bigg( \mathbb{E} |y^0_1|^2_{L^2(G)} + \mathbb{E} |y^0_2|^2_{L^2(G)} + \sum_{i=1}^m \sum_{j=1}^2 \mathbb{E} \iint_{(0,T) \times \mathcal{O}_d}  |y^i_{j,d}|^2 \,dx\,dt \bigg).
\end{aligned}
\end{align}
If \( (\phi^T_{1,\varepsilon}, \phi^T_{2,\varepsilon}) = (0, 0) \), we have (see, e.g.,  \cite{FabPuZuaz95})
\begin{align}\label{Ezu.1}
\lim_{t \to 0^{+}} \frac{J_{\varepsilon}(t(\phi^T_1, \phi^T_2))}{t} \geq 0, \quad \forall (\phi^T_1, \phi^T_2) \in L^2_{\mathcal{F}_T}(\Omega; L^2(G;\mathbb{R}^2)).
\end{align}
By \eqref{Ezu.1}, and choosing \( (u_1^{\varepsilon}, u_2^{\varepsilon}, u_3^{\varepsilon}) = (0, 0, 0) \), we observe that \eqref{eq53NS} and \eqref{eq54NS} still hold. From \eqref{eq54NS}, we deduce the existence of a subsequence (still denoted by the same symbols) of \( (u_1^{\varepsilon}, u_2^{\varepsilon}, u_3^{\varepsilon}) \) such that, as \( \varepsilon \to 0 \), we have
\begin{align}\label{weakconvr}
\begin{aligned}
u^{\varepsilon}_1 &\longrightarrow  \widehat{u}_1 \quad \text{weakly in} \,\, L^2((0,T) \times \Omega; L^2(G_0)), \\
u^{\varepsilon}_2 &\longrightarrow  \widehat{u}_2 \quad \text{weakly in} \,\, L^2((0,T) \times \Omega; L^2(G)), \\
u^{\varepsilon}_3 &\longrightarrow  \widehat{u}_3 \quad \text{weakly in} \,\, L^2((0,T) \times \Omega; L^2(G)).
\end{aligned}
\end{align}
Thus, by \eqref{weakconvr}, we also have that
\begin{equation}\label{Eq56}
(y_{1}^\varepsilon(T), y_{2}^\varepsilon(T)) \longrightarrow (\widehat{y}_1(T), \widehat{y}_2(T)) \quad \text{weakly in} \; L^2_{\mathcal{F}_T}(\Omega; L^2(G;\mathbb{R}^2)), \quad \text{as} \, \varepsilon \to 0,
\end{equation}
where \( (\widehat{y}_1, \widehat{y}_2, (\widehat{z}^i_1, \widehat{z}^i_2; \widehat{Z}^i_1, \widehat{Z}^i_2)) \) is the solution of the system \eqref{eqq4.7} associated with the controls \( (\widehat{u}_1, \widehat{u}_2, \widehat{u}_3) \).  Finally, combining \eqref{eq53NS} and \eqref{Eq56}, we conclude that
\[
\widehat{y}_1(T,\cdot) = \widehat{y}_2(T,\cdot) = 0 \quad \text{in} \; G, \quad \mathbb{P}\textnormal{-a.s.}
\]
Subsequently, it is easy to see that the estimate \eqref{eq54NScont2} can be easily derived from \eqref{eq54NS} and \eqref{weakconvr}. This completes the proof of Proposition \ref{Lm5.5} and, consequently, proves the main Stackelberg-Nash null controllability result stated in Theorem \ref{th4.1SN}.
\end{proof}

Similarly to the proof of Proposition \ref{Lm5.5}, we also  establish the following null controllability result for the system \eqref{eqq4.7sec} and then deduce the proof of Theorem \ref{th4.1SNsec}.
\begin{prop}
Assume that \eqref{Assump10} and \eqref{assump1.3} hold. Let  \( \rho=\rho(t) \) be the weight function given in Proposition \ref{Pro4.2sec}. Then, for any target functions \( (y^i_{1,d}, y^i_{2,d}) \in \mathscr{H}_{i,d} \) (\( i = 1, 2, \dots, m \)) satisfying \eqref{asspfortargfst} and  any initial states \( y^0_1,y^0_2 \in L^2_{\mathcal{F}_0}(\Omega;L^2(G)) \), there exist controls 
\( (\widehat{u}_1, \widehat{u}_2,\widehat{u}_3) \in \mathscr{U} \), minimizing the functional \( J \) such that the corresponding solution of \eqref{eqq4.7sec} satisfies that
$$\widehat{y}_1(T,\cdot)=\widehat{y}_2(T,\cdot) =0 \quad\textnormal{in}\;G, \quad \mathbb{P}\textnormal{-a.s.}$$
Moreover, there exists a constant $C>0$ such that
\begin{align*}
\begin{aligned}
&\, |\widehat{u}_1|^2_{L^2_\mathcal{F}(0,T; L^2(G_0))} + |\widehat{u}_2|^2_{L^2_\mathcal{F}(0,T; L^2(G))} + |\widehat{u}_3|^2_{L^2_\mathcal{F}(0,T; L^2(G))} \\
& \leq C \bigg( \mathbb{E} |y^0_1|^2_{L^2(G)} + \mathbb{E} |y^0_2|^2_{L^2(G)} + \sum_{i=1}^m \sum_{j=1}^2 \mathbb{E} \iint_{(0,T) \times \mathcal{O}_d}  \rho^2|y^i_{j,d}|^2 \,dx\,dt \bigg).
\end{aligned}
\end{align*}
\end{prop}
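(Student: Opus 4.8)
The plan is to mimic, almost verbatim, the argument of Proposition \ref{Lm5.5}, with the observability inequality \eqref{observaineq} replaced by its weighted counterpart \eqref{observaineqsec} and the adjoint system \eqref{ADJSO1} replaced by \eqref{ADJSO1sec}. First I would record the duality identity obtained by applying It\^o's formula to the solutions of \eqref{eqq4.7sec} and \eqref{ADJSO1sec}: for every terminal datum $(\phi_1^T,\phi_2^T)\in L^2_{\mathcal{F}_T}(\Omega;L^2(G;\mathbb{R}^2))$,
\begin{align*}
&\mathbb{E}\int_G\big[y_1(T)\phi_1^T+y_2(T)\phi_2^T\big]\,dx-\mathbb{E}\int_G\big[y_1^0\phi_1(0)+y_2^0\phi_2(0)\big]\,dx \\
&=\mathbb{E}\iint_Q\big(\chi_{G_0}u_1\phi_1+u_2\Phi_1+u_3\Phi_2\big)\,dx\,dt+\sum_{i=1}^m\sum_{j=1}^2\alpha_i\,\mathbb{E}\iint_Q y^i_{j,d}\,\psi^i_j\,\chi_{\mathcal{O}_d}\,dx\,dt,
\end{align*}
the only structural change with respect to \eqref{ine6.6sec} being that $h_1$ no longer feeds the first $\phi$-equation, which is harmless.

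Next, for $\varepsilon>0$, I would introduce on $L^2_{\mathcal{F}_T}(\Omega;L^2(G;\mathbb{R}^2))$ the penalized functional $J_\varepsilon$ defined exactly as in \eqref{functJ}, namely
\begin{align*}
J_\varepsilon(\phi_1^T,\phi_2^T)&=\tfrac12\,\mathbb{E}\iint_Q\big(\chi_{G_0}|\phi_1|^2+|\Phi_1|^2+|\Phi_2|^2\big)\,dx\,dt+\varepsilon\,|(\phi_1^T,\phi_2^T)|_{L^2_{\mathcal{F}_T}(\Omega;L^2(G;\mathbb{R}^2))} \\
&\quad+\mathbb{E}\int_G\big[y_1^0\phi_1(0)+y_2^0\phi_2(0)\big]\,dx+\sum_{i=1}^m\sum_{j=1}^2\alpha_i\,\mathbb{E}\iint_Q y^i_{j,d}\,\psi^i_j\,\chi_{\mathcal{O}_d}\,dx\,dt,
\end{align*}
which is continuous and strictly convex. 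Coercivity is the place where the weight $\rho$ from Proposition \ref{Pro4.2sec} and the hypothesis \eqref{asspfortargfst} genuinely enter. I would bound the initial-data term from below by Young's inequality as in \eqref{inneq1}, and for the target term I would write $y^i_{j,d}\psi^i_j=(\rho\,y^i_{j,d})(\rho^{-1}\psi^i_j)$ and apply Young's inequality to get, for any $\delta>0$,
\[
\sum_{i,j}\alpha_i\,\mathbb{E}\iint_Q y^i_{j,d}\,\psi^i_j\,\chi_{\mathcal{O}_d}\geq-\tfrac{\delta}{2}\sum_{i,j}\mathbb{E}\iint_Q\rho^{-2}|\psi^i_j|^2\,dx\,dt-\tfrac{1}{2\delta}\sum_{i,j}\alpha_i^2\,\mathbb{E}\iint_{(0,T)\times\mathcal{O}_d}\rho^2|y^i_{j,d}|^2\,dx\,dt,
\]
the last integral being finite precisely because of \eqref{asspfortargfst}. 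Inserting these two bounds together with the observability inequality \eqref{observaineqsec} into $J_\varepsilon$ and taking $\delta$ small (of order the inverse of the observability constant) absorbs the $\rho^{-2}|\psi^i_j|^2$ and $\chi_{G_0}|\phi_1|^2+|\Phi_1|^2+|\Phi_2|^2$ terms and yields
\[
J_\varepsilon(\phi_1^T,\phi_2^T)\geq\varepsilon\,|(\phi_1^T,\phi_2^T)|_{L^2_{\mathcal{F}_T}(\Omega;L^2(G;\mathbb{R}^2))}-C\Big(\mathbb{E}|y_1^0|^2_{L^2(G)}+\mathbb{E}|y_2^0|^2_{L^2(G)}+\sum_{i,j}\mathbb{E}\iint_{(0,T)\times\mathcal{O}_d}\rho^2|y^i_{j,d}|^2\,dx\,dt\Big),
\]
hence $J_\varepsilon$ is coercive.

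Finally, I would conclude exactly as in Proposition \ref{Lm5.5}: $J_\varepsilon$ admits a unique minimizer $(\phi_{1,\varepsilon}^T,\phi_{2,\varepsilon}^T)$; writing the first-order optimality condition for $J_\varepsilon$ and choosing the controls $(u_1^\varepsilon,u_2^\varepsilon,u_3^\varepsilon)=(\chi_{G_0}\phi_1^\varepsilon,\Phi_1^\varepsilon,\Phi_2^\varepsilon)$ in the duality identity above gives $|(y_1^\varepsilon(T),y_2^\varepsilon(T))|_{L^2_{\mathcal{F}_T}(\Omega;L^2(G;\mathbb{R}^2))}\leq\varepsilon$, while testing the optimality condition against $(\phi_{1,\varepsilon}^T,\phi_{2,\varepsilon}^T)$ and using \eqref{observaineqsec} with Young's inequality gives a bound on $(u_1^\varepsilon,u_2^\varepsilon,u_3^\varepsilon)$ uniform in $\varepsilon$ with right-hand side $\mathbb{E}|y_1^0|^2_{L^2(G)}+\mathbb{E}|y_2^0|^2_{L^2(G)}+\sum_{i,j}\mathbb{E}\iint_{(0,T)\times\mathcal{O}_d}\rho^2|y^i_{j,d}|^2\,dx\,dt$. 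The degenerate case $(\phi_{1,\varepsilon}^T,\phi_{2,\varepsilon}^T)=(0,0)$ is dealt with by the standard argument of \cite{FabPuZuaz95} (taking the zero control, for which the two previous estimates persist). Extracting a weakly convergent subsequence $u_k^\varepsilon\rightharpoonup\widehat u_k$ in the relevant $L^2$ spaces, passing to the limit in the state equation \eqref{eqq4.7sec} and in $|(y_1^\varepsilon(T),y_2^\varepsilon(T))|\leq\varepsilon$, and using lower semicontinuity of the norm, we obtain $\widehat y_1(T,\cdot)=\widehat y_2(T,\cdot)=0$ $\mathbb{P}$-a.s.\ together with the stated estimate; that $(\widehat u_1,\widehat u_2,\widehat u_3)$ minimizes $J$ follows as in the single-objective case. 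The only genuinely new point compared with Proposition \ref{Lm5.5} is the weighted absorption of the target term in the coercivity step, which is exactly what forces the hypothesis \eqref{asspfortargfst}; everything else is a line-by-line transcription.
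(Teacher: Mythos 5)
Your proposal follows exactly the route the paper intends (the paper omits this proof, referring to the argument of Proposition \ref{Lm5.5} combined with the weighted observability inequality \eqref{observaineqsec}), and the genuinely new point --- writing $y^i_{j,d}\psi^i_j=(\rho\,y^i_{j,d})(\rho^{-1}\psi^i_j)$ so that \eqref{asspfortargfst} and the weight $\rho$ give coercivity of $J_\varepsilon$ --- is exactly the right one. One slip should be corrected: in this second scenario the target source appears only in the $z_2^i$-equation of \eqref{eqq4.7sec}, so the It\^o duality between \eqref{eqq4.7sec} and \eqref{ADJSO1sec} produces only the term $\sum_{i=1}^m \alpha_i\,\mathbb{E}\iint_Q y^i_{2,d}\,\psi^i_2\,\chi_{\mathcal{O}_d}\,dx\,dt$; your identity (and hence the penalized functional $J_\varepsilon$) should not contain the $j=1$ contribution, contrary to your remark that the disappearance of $h_1$ from the first $\phi$-equation is the only structural change --- the $z_1^i$-equation has also lost its source $-\alpha_i(y_1-y^i_{1,d})\chi_{\mathcal{O}_{i,d}}$. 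Once $J_\varepsilon$ is written with the $j=2$ term only, your argument goes through verbatim and in fact yields a slightly sharper bound (only $\rho^2|y^i_{2,d}|^2$ on the right-hand side), which implies the estimate as stated.
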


\section{Concluding remarks}\label{secc5}
We have presented the Stackelberg-Nash strategy for coupled forward stochastic parabolic systems with one localized leader control, two additional leader controls in the noise terms, and \( m \) followers. Two scenarios for the followers are considered: The first scenario involves the followers minimizing a functional that depends on two components of the state, while the second scenario involves minimizing a functional that depends only on the second component. For both scenarios, and for fixed leader controls, the existence and uniqueness of the Nash equilibrium are established, along with its characterization. The problem is then reformulated as a classical null controllability issue for the associated coupled forward-backward stochastic parabolic system, which is obtained through suitable Carleman estimates. 

Now, we discuss an open question for future research: In \eqref{functji1}, each follower \( v_i \) (\( i = 1, 2, \dots, m \)) aims to minimize the functional \( J_i \), which involves both the states \( y_1 \) and \( y_2 \). The goal is to keep the state \( (y_1, y_2) \) close to the target functions \( (y^i_{1,d}, y^i_{2,d}) \), while using a control \( v_i \) with minimal energy, in the sense of the \( L^2_{\rho_*} \)-norm. On the other hand, in \eqref{functji1sec}, each follower \( v_i \) minimizes the functional \( \widetilde{J}_i \), which involves only the state \( y_2 \) and the \( L^2 \)-norm of \( v_i \). Following the same ideas presented in this paper, we can also consider the functionals \( (J^1_i)_{1 \leq i \leq m} \) defined by:
\[
J^1_i(u_1, u_2, u_3; v_1, \dots, v_m) = \frac{\alpha_i}{2} \mathbb{E} \iint_{(0,T) \times \mathcal{O}_{i,d}} |y_1 - y^i_{1,d}|^2 \,dx\,dt + \frac{\beta_i}{2} \mathbb{E} \iint_{(0,T) \times G_i} \rho_*^2(t) |v_i|^2 \,dx\,dt,
\]
where \( \rho_* \) is the same function defined in \eqref{functji1}. However, the Stackelberg-Nash controllability problem for the system \eqref{eqq1.1}, when the followers \( (v_i)_{1 \leq i \leq m} \) minimize the functionals \( (J^2_i)_{1 \leq i \leq m} \) or \( (J^3_i)_{1 \leq i \leq m} \) as defined below, remains an open problem:
\begin{align*}
J^2_i(u_1, u_2, u_3; v_1, \dots, v_m) = & \, \frac{\alpha_i}{2} \mathbb{E} \iint_{(0,T) \times \mathcal{O}_{i,d}} \left( |y_1 - y^i_{1,d}|^2 + |y_2 - y^i_{2,d}|^2 \right) \,dx\,dt \\
& + \frac{\beta_i}{2} \mathbb{E} \iint_{(0,T) \times G_i} |v_i|^2 \,dx\,dt,
\end{align*}
and
\[
J^3_i(u_1, u_2, u_3; v_1, \dots, v_m) = \frac{\alpha_i}{2} \mathbb{E} \iint_{(0,T) \times \mathcal{O}_{i,d}} |y_1 - y^i_{1,d}|^2 \,dx\,dt + \frac{\beta_i}{2} \mathbb{E} \iint_{(0,T) \times G_i} |v_i|^2 \,dx\,dt.
\]

\end{document}